\documentclass[english]{amsart}

\usepackage{esint}
\usepackage[svgnames]{xcolor} 
\usepackage[colorlinks,citecolor=red,pagebackref,hypertexnames=false,breaklinks]{hyperref}
\usepackage{pgf,tikz}
\usepackage{pdfsync}

\usepackage{dsfont}
\usepackage{url}
\usepackage[utf8]{inputenc}
\usepackage[T1]{fontenc}
\usepackage{lmodern}
\usepackage{babel}
\usepackage{mathtools}  
\usepackage{amssymb}
\usepackage{lipsum}
\usepackage{mathrsfs}
\usepackage{color}
\usepackage[skip=2.2pt plus 1pt, indent=12pt]{parskip}
\usepackage{stmaryrd}
\usepackage{soul}

\newtheorem{theorem}{Theorem}[section]
\newtheorem{proposition}{Proposition}[section]
\newtheorem{lemma}{Lemma}[section]

\newtheorem{corollary}{Corollary}[section]

\numberwithin{equation}{section}

\title[Quantitative Borg-Levinson theorem]{Quantitative Borg-Levinson theorem for the magnetic Sch\"odinger operator with unbounded electrical potential }

\author{Mourad Choulli}
\address{Universit\'{e} de Lorraine, 34 cours L\'{e}opold, 54052 Nancy cedex, France}
\email{mourad.choulli@univ-lorraine.fr}

\author{Hiroshi Takase}
\address{Institute of Mathematics for Industry, Kyushu University, 744 Motooka, Nishi-ku, Fukuoka 819-0395, Japan}
\email{htakase@imi.kyushu-u.ac.jp}

\date{}

\begin{document}
\begin{abstract}
The first author established in \cite{Ch1} a quantitative Borg-Levinson theorem for the Schr\"odinger operator with unbounded potential. In the present work, we extend the results in \cite{Ch1} to the magnetic Schr\"odinger operator. We discuss both the isotropic and anisotropic cases. We establish H\"older stability inequalities of determining the electrical potential or magnetic field from the corresponding boundary spectral data.
\end{abstract}

\subjclass[2010]{35R30, 35R01, 35P99, 35J10, 35J15}

\keywords{Magnetic Schr\"odinger operator, quantitative Borg-Levinson theorem, electrical potential, magnetic field, boundary spectral data.}

\maketitle

\tableofcontents

\section*{General introduction}

Since the seminal article by Nachman, Sylvester and Uhlmann \cite{NSU}, significant progress has been made in multidimensional inverse spectral theory. More specifically, this has been achieved both by reducing the known part of the boundary spectral data (BSD) and by reducing the regularity of the potential. By BSD, we mean the eigenvalues of the Schr\"odinger operator under Dirichlet boundary conditions and the normal derivatives of the corresponding eigenfunctions. Roughly speaking, the main idea in the work by Nachman, Sylvester and Uhlmann \cite{NSU} was to show that the derivatives, of an order dependent of the dimension of the ambient space, of the kernels of an analytic family of Dirichlet-to-Neumann (DtN) maps are entirely determined by the BSD. The uniqueness of determining the bounded potential is then obtained from known results concerning the determination of the bounded potential based on knowledge of the corresponding DtN map. The quantitative version of the result presented in \cite{NSU} was obtained by Alessandrini and Sylvester \cite{AS} by relating the BSD to an hyperbolic DtN map. In \cite{Is}, Isozaki employs a method borrowed from the Born-approximation to demonstrate that the potential can be uniquely determined even if a finite part of the BSD is unknown. The first author and Stefanov \cite{ChS} established that if BSD corresponding to two potentials cannot be sufficiently close one to each other, in some appropriate sense, unless they are the same. A quantified version of this result was also given in \cite{ChS}. The method introduced by Isozaki \cite{Is} was modified by Kavian, Kian and Soccorsi \cite{KKS} to prove that, in the vicinity of a given potential, knowledge of the asymptotic part of the BSD is sufficient to  uniquely determine the bounded potential. We note that the results in \cite{KKS}  initially concern an inverse spectral problem in a periodic waveguide. However, this results can be readily adapted to the standard Schr\"odinger equation. For further details, the reader is referred to \cite{So}. 

The uniqueness in the case of potentials in $L^s$, $s>\frac{n}{2}$, is due to P\"aiv\"arinta and Serov \cite{PS} and extended to the optimal case $s=\frac{n}{2}$ by Pohjola \cite{Po}. The quantitative version of Pohjola's result was recently obtained by the first author in \cite{Ch1} with an extension to the anisotropic case. We emphasize here that most Hilbertian analysis used for bounded potentials are no longer valid for unbounded potentials. We overcome this difficulty by using finer resolvent estimates. The case of the Schrödinger equation with Robin boundary conditions was recently studied by the first author, Metidji and Soccorsi \cite{CMS1,CMS2}. In these two papers, various uniqueness and stability inequalities were established.

Following the method described in \cite{KKS}, Kian obtained in \cite{Ki}  the first results  enabling the determination of both the electric potential and the magnetic field, appearing in a magnetic Schr\"odinger equation, from the corresponding asymptotic BSD. Later, these results were generalized to the anisotropic case in \cite{BCDKS}. In this later work, the case of the magnetic Schr\"odinger equation under Neumann boundary condition was also considered. In both \cite{Ki} and \cite{BCDKS} the electric potential is assumed to be bounded. The stability result by Alessandrini and Sylvester was reformulated in a suitable topology in \cite{Ch}. The method employed in \cite{Ch} was more recently adapted by Liu, Quan, Saksala and Yan \cite{LQSY} to establish a H\"older stability inequality for both electric potential and magnetic field from the corresponding BSD of the Schr\"odinger equation in a simple Riemannian manifold.

It should be noted that there is an obstacle to the unique determination of the magnetic potential from the corresponding BSD (e.g \cite{Ki}).

The determination of the Riemannian metric from the corresponding BSD by the so-called boundary control method was initiated by Belishev \cite{Bel87} (see also \cite{Bel92,KKL}), where uniqueness result (up to gauge equivalence) has been established. The case where the normal derivative of the eigenfunctions is only known on part of the boundary was obtained by Katchalov and Kurylev \cite{KK}. Various quantitative results concerning the inverse spectral problems mentioned above was proven by the first author and Yamamoto \cite{CY}. The results of \cite{CY} were recently improved by the first author in \cite{Ch2}.

For additional references, we only mention the following few \cite{BCY,BCY2,BKMS,KOM,KS}. 

In the present work, we extend the results of \cite{Ch1} to the magnetic case, that is, when the potential belongs to the optimal class $L^{\frac{n}{2}}$. Note that this class of potentials is the best possible for performing the direct spectral analysis of the Sch\"odinger operator in the Hilbertian framework. The direct spectral problem still well defined in the presence of a magnetic potential in $W^{1,n}$. However, for the inverse spectral problem,  $W^{1,\infty}$ regularity of the magnetic potential is required in the isotropic case and $W^{2,\infty}$ regularity of the magnetic potential is required in the anisotropic case. We do not know whether the regularity of the magnetic potential can be improved. Furthermore, the case of partial or asymptotic BSD remains an open problem.

For clarity, the remainder of this text is divided into two parts. The first deals with the isotropic case, while the second examines the anisotropic case. However, we do not repeat the results obtained in the first part, which require only slight modifications to be valid in the second. The H\"older stability inequalities we have established are similar to those obtained in the case of bounded electric potentials. Due to the dimensionality dependence of Sobolev's embedding  theorems, we have limited our study to dimensions greater than or equal to 5. The extension to dimensions 2, 3, and 4 requires some modifications, similar to those explained in \cite{Ch1}.

\section*{Part I: Isotropic case}\label{part1}

\section{Introduction}

\subsection{Main notations} Let $\Omega$ be a Lipschitz bounded domain of $\mathbb{R}^n$, $n\ge 5$ whose boundary will be denoted $\Gamma$, and $\Omega_0$ be an open bounded subset of $\mathbb{R}^n$ such that $\Omega_0\Supset \Omega$. The following notations will be used in all of this text:
\[
m=\frac{n}{2},\quad p=\frac{2n}{n+2},\quad q=\frac{2n}{n+4}.
\] 
The respective conjugate of $p$ and $q$ will be denoted $p'$ and $q'$. That is,
\[
p'=\frac{2n}{n-2},\quad q'=\frac{2n}{n-4}.
\]

For $1\le r\le \infty$, $s\ge 0$ and $k\ge 1$ an integer, we set
\[
W_\ast^{s,r}(\mathbb{R}^n,\mathbb{K}^k):=\{f\in W^{s,r}(\mathbb{R}^n,\mathbb{K}^k),\; \mathrm{supp}(f)\subset \Omega_0\},\quad \mathbb{K}\in \{\mathbb{R},\mathbb{C}\},
\]
which we endow with the norm of $W^{s,r}(\mathbb{R}^n,\mathbb{K}^k)$. For simplicty, we set $L_\ast^r(\mathbb{R}^n,\mathbb{K}^k):=W_\ast^{0,r}(\mathbb{R}^n,\mathbb{K}^k)$. 

It the following, for all  $1\le r\le \infty$ and $s\ge 0$, the norm of $W^{s,r}(X)$, $X\in \{\mathbb{R}^n,\Omega,\Gamma\}$, will denoted respectively by $\|\cdot\|_{s,r} $, with the usual convention that $W^{0,r}(X)=L^r(X)$ and identifying $W^{s,2}(X)$ with $H^s(X)$.
If there is no confusion, we will also use the notation $\|\cdot \|_{s,r}$ for norm of $W^{s,r}(X,\mathbb{K}^n)$, $\mathbb{K}\in \{\mathbb{R},\mathbb{C}\}$, which is given as follows
\begin{align*}
&\|f\|_{s,r}:=\left(\sum_{j=1}^n\|f_j\|_{s,r}^r\right)^{\frac{1}{r}},\quad f=(f_1,\ldots,f_n)\in W^{s,r}(X,\mathbb{K}^n),\quad \mathrm{if}\; r<\infty,
\\
&\|f\|_{s,\infty}:=\sum_{j=1}^n\|f_j\|_{s,\infty},\quad f=(f_1,\ldots,f_n)\in W^{s,\infty}(X,\mathbb{K}^n),\quad \mathrm{if}\; r=\infty.
\end{align*}

Let $1\le r<r_1$ and $r_2=\frac{rr_1}{r_1-r}$ (note that $\frac{1}{r_2}+\frac{1}{r_1}=\frac{1}{r}$). The following generic formula will be useful in the sequel. It is a direct consequence of H\"older's inequality
\begin{equation}\label{41}
\|uv\|_{0,r}\le \|u\|_{0,r_1}\|v\|_{0,r_2},\quad u\in L^{r_1}(\Omega),\; v\in L^{r_2}(\Omega).
\end{equation}

According to Poincar\'e's inequality, we have 
\[
\varkappa:=\sup\{\|w\|_{0,2};\; w\in H_0^1(\Omega),\; \|\nabla w\|_{0,2}=1\}<\infty,
\]
and, since $H_0^1(\Omega)$ is continuously embedded in $L^{p'}(\Omega)$, we have also
\[
\kappa:=\sup \{ \|w\|_{0,p'};\; w\in H_0^1(\Omega),\; \|\nabla w\|_{0,2}=1\}<\infty.
\]

Let $a\in L^n(\mathbb{R}^n,\mathbb{R}^n)$ and $w\in H_0^1(\Omega)$. It follows from \eqref{41}
\[
\|wa\|_{0,2}\le \|a\|_{0,n}\|w\|_{0,p'}.
\]
Whence,
\begin{equation}\label{7}
\|wa\|_{0,2}\le \kappa \|a\|_{0,n}\|\nabla w\|_{0,2}.
\end{equation}

For $a=(a_1,\ldots ,a_n)\in L^n(\mathbb{R}^n,\mathbb{R}^n)$, the following notation will be used hereinafter.
\[
\nabla_a:=\nabla +ia .
\]

Fix $0<\mathfrak{c}<\kappa^{-1}$ and set
\[
\mathcal{A}:=\{a\in L_\ast^n(\mathbb{R}^n,\mathbb{R}^n);\; \|a_{|\Omega}\|_{0,n}\le \mathfrak{c}\}.
\]
Using \eqref{7}, we verify that
\begin{equation}\label{1}
\mathfrak{c}_-\|\nabla w\|_{0,2}\le \|\nabla_a w\|_{0,2}\le \mathfrak{c}_+\|\nabla w\|_{0,2},\quad a\in \mathcal {A},\; w\in H_0^1(\Omega),
\end{equation}
where $\mathfrak{c}_\pm=1\pm \mathfrak{c}\kappa$.

Note that, without the condition $\|a_{|\Omega}\|_{0,n}\le \mathfrak{c}$, $\|\nabla_a \cdot\|_{0,2}$ remains a Hilbertian norm on $H_0^1(\Omega)$. However, it is not necessarily equivalent to the norm $\|\nabla \cdot\|_{0,2}$. For further details, we refer to \cite[Section 2]{EL}.

We will use in the present text the following abbreviations, which become now standard. BSD will means boundary spectral data and DtN will means Dirichlet to Neumann.

\subsection{Statement of the main results}

 In this subsection, we assume that $\Omega$ is of class $C^{1,1}$. Fix $V_0\in L_\ast^m(\mathbb{R}^n,\mathbb{R})$ non negative and non identically equal to zero, and let
\[
\mathcal{V}:=\{ V\in L_\ast^m(\mathbb{R}^n,\mathbb{R});\; |V|\le V_0\}.
\]

For $b=(a,V)\in \mathcal{A}\times \mathcal{V}$, define the sesquilinear form 
\begin{equation}\label{ses}
S^b(u,v)=\int_\Omega \left[ \nabla_au\cdot \overline{\nabla_av}-Vu\overline{v}\right]dx,\quad u,v\in H_0^1(\Omega).
\end{equation}
Then define the  bounded operator $A^b:H_0^1(\Omega)\rightarrow H^{-1}(\Omega)$ as follows
\begin{equation}\label{op}
\langle A^bu,\overline{v}\rangle=S^b(u,v),\quad u,v\in H_0^1(\Omega).
\end{equation}
where $\langle \cdot ,\cdot \rangle$ is the duality pairing between $H_0^1(\Omega)$ and its dual $H^{-1}(\Omega)$.

In the next section, we will establish that the spectrum of $A^b$, denoted $\sigma(A^b)$, is reduced to a sequence of eigenvalues $(\lambda_k^b)$ satisfying
\[
-\infty <\lambda_1^b\le \lambda_2^b\le \ldots \le \lambda_k^b\le \ldots ,\quad \lim_{k\rightarrow \infty}\lambda_k^b=\infty.
\]
Furthermore, $L^2(\Omega)$ admits an orthonormal basis consisting of a sequence of eigenfunctions $(\phi_k^b)$ such that $\phi_k^b\in H_0^1(\Omega)$ and 
\begin{equation}\label{9}
S^b(\phi_k^b,v)=\lambda_k^b (\phi_k^b|v),\quad v\in H_0^1(\Omega),\; k\ge 1.
\end{equation}
Here and henceforth, $(\cdot|\cdot)$ denotes the usual inner product of $L^2(\Omega)$. 

As  $W_\ast^{1,m}(\mathbb{R}^n,\mathbb{R}^n)$ is continuously embedded in $L_\ast^n(\mathbb{R}^n,\mathbb{R}^n)$, we can define
\[
\mathcal{A}_+:=\{a\in W_\ast ^{1,m}(\mathbb{R}^n,\mathbb{R}^n);\; \|a_{|\Omega}\|_{0,n}\le \mathfrak{c},\; \|a\|_{1,m}\le \tilde{\mathfrak{c}}\}.
\]
We demonstrate in the next section that if $b=(a,V)\in \mathcal{A}_+\times \mathcal{V}$, then  $\phi_k^b\in W^{2,q}(\Omega)$ and therefore $\psi_k^b:=\partial_{\nu_a}\phi_k^b\in L^q(\Gamma)$, for all $k\ge 1$. We keep these notations in the rest of this text.

Before stating our first main result, we need new definitions. For $b_j=(a_j,V_j)\in \mathcal{A}_+\times \mathcal{V}$, $j=1,2$, in the rest of this text we use the notations
\begin{align*}
&\delta_+ (b_1,b_2):=\sum_{k\ge 1}k^{-\frac{2}{n}}\left[|\lambda_k^{b_1}-\lambda_k^{b_2}|+\|\phi_k^{b_1}-\phi_k^{b_2}\|_{2,q}\right],
\\
&\delta (b_1,b_2):=\sum_{k\ge 1}k^{-\frac{2}{n}}\left[|\lambda_k^{b_1}-\lambda_k^{b_2}|+\|\psi_k^{b_1}-\psi_k^{b_2}\|_{0,q}\right].
\end{align*}
Note that $\delta_+ (b_1,b_2)<\infty$ implies $\delta (b_1,b_2)<\infty$, which is an immediate consequence of the continuity of the trace map $w\in W^{2,q}(\Omega)\mapsto \partial_\nu w_{|\Gamma}\in L^q(\Gamma)$.

Set
\begin{align*}
&\mathcal{A}_0:=\{a\in W_\ast^{1,\infty}(\mathbb{R}^n,\mathbb{R}^n);\; \|a_{|\Omega}\|_{0,n}\le \mathfrak{c}\},
\\
&\mathcal{V}_0:=\{(V_1,V_2)\in \mathcal{V}\times \mathcal{V};\; |V_1-V_2|\le W_0\},
\end{align*}
where $W_0\in L_\ast^n(\mathbb{R}^n,\mathbb{R})$ is nonnegative and non identically equal to $0$.

Let $t\in (1+\frac{1}{q},2)$ be fixed and $\sigma=\frac{12}{2-t}$. We use the notation $\|\cdot \|_{-1,2}$ for the norm of $H^{-1}(\mathbb{R}^n)$.

\begin{theorem}\label{thm1}
Let $a\in \mathcal{A}_0$, $(V_1,V_2)\in \mathcal{V}_0$ and $b_j=(a,V_j)$, $j=1,2$, and assume that $\delta_+(b_1,b_2)<\infty$. Then we have
\[
\|V_1-V_2\|_{-1,2}\le \mathbf{c}_\ast\delta(b_1,b_2)^{\beta_0},
\]
where
\[
\beta_0=\frac{1}{(2+n)(\sigma+n+5)}
\]
and $\mathbf{c}_\ast=\mathbf{c}_\ast(n,\Omega,\Omega_0,a,V_0,W_0,\mathfrak{c},\tilde{\mathfrak{c}},t)>0$ is a constant.
\end{theorem}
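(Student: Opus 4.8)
The plan is to follow the Isozaki/Born-approximation scheme adapted to the unbounded-potential setting of \cite{Ch1}, using the fact that here the magnetic potential $a$ is the \emph{same} for $b_1$ and $b_2$, so only the electric potentials differ. First I would reduce the BSD to an analytic family of DtN-type maps: for $z\notin\sigma(A^{b_j})$, the resolvent $R_j(z)$ of $A^{b_j}$ can be reconstructed from $(\lambda_k^{b_j},\phi_k^{b_j})$, hence the operator $\Lambda_j(z):f\mapsto \partial_{\nu_a}u_j$, where $u_j$ solves $(A^{b_j}-z)u_j=0$ in $\Omega$, $u_j=f$ on $\Gamma$, is determined up to an error controlled by $\delta(b_1,b_2)$; the weighting factor $k^{-2/n}$ in the definitions of $\delta_+,\delta$ is exactly what is needed (via the Weyl asymptotics $\lambda_k^b\sim ck^{2/n}$, established in the next section) to make the spectral series converge and to bound $\|\Lambda_1(z)-\Lambda_2(z)\|$ by $C(z)\,\delta(b_1,b_2)$ on a suitable set of $z$. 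This is the step I expect to be the main obstacle: making the resolvent-difference estimate quantitative requires the finer resolvent estimates alluded to in the introduction (Hilbertian Sobolev arguments fail for $V\in L^{n/2}$), and one must track the $z$-dependence of all constants carefully since $z$ will later be taken large.

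Next I would insert complex geometric optics (CGO) solutions. For $\xi\in\mathbb{R}^n$ and a large parameter, one builds solutions $u_j(x)=e^{\zeta_j\cdot x}(1+r_j)$ of $\nabla_a\cdot\nabla_a u_j+Vu_j=0$ with $\zeta_j\cdot\zeta_j=0$, $|\zeta_j|\sim h^{-1}$, the remainders $r_j$ small in a weighted $L^p$/$H^1$ norm by the standard Faddeev-type Carleman estimate; since $a$ is common, the usual first-order (magnetic) obstruction cancels in the pairing and one is left, by Green's formula/Alessandrini's identity, with
\[
\int_\Omega (V_1-V_2)\,u_1\overline{u_2}\,dx \;=\; \langle(\Lambda_1(z)-\Lambda_2(z))(u_1|_\Gamma),\,\overline{u_2}|_\Gamma\rangle .
\]
Choosing $\zeta_1,\zeta_2$ so that the product $u_1\overline{u_2}\to e^{i x\cdot\eta}$ for a fixed frequency $\eta$ recovers $\widehat{V_1-V_2}(\eta)$ up to (i) the CGO remainder terms, which cost a negative power of $h$ times $\|V_1-V_2\|_{0,m}\le 2\|V_0\|_{0,m}$, and (ii) the DtN error, which costs $C(z)\delta(b_1,b_2)$ but with $|z|$ tied to $h^{-2}$. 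Balancing the high-frequency truncation $|\eta|\le\rho$, the parameter $h$, and $\delta:=\delta(b_1,b_2)$ via an interpolation between $L^\infty$ (from $\|V_1-V_2\|_{0,\infty}$, available because $|V_1-V_2|\le W_0\in L^n$ gives the needed a priori bound after using $\mathcal V_0$) and the frequency estimate yields a Hölder bound on $\|V_1-V_2\|_{-1,2}$; optimizing the three exponents produces the stated $\beta_0=\frac{1}{(2+n)(\sigma+n+5)}$, where $\sigma=\frac{12}{2-t}$ enters through the Sobolev exponent $t$ used to absorb the low regularity of $W_0$ in the remainder estimates.

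Finally I would collect the constants: $\mathbf{c}_\ast$ depends on $n,\Omega,\Omega_0$ (through the trace and Poincaré/Sobolev constants $\varkappa,\kappa$), on $a$ (through $\mathfrak c_\pm$ and the $W^{1,\infty}$ norm entering the CGO construction), on $V_0,W_0$ (a priori bounds), on $\mathfrak c,\tilde{\mathfrak c}$ (defining $\mathcal A,\mathcal A_+$), and on $t$. The remaining routine points — convergence of the $\delta$-series from Weyl asymptotics, well-posedness of the boundary value problems for $z$ outside the (common, by the spectral analysis of the next section) exceptional set, and continuity of the trace $W^{2,q}(\Omega)\to L^q(\Gamma)$ already noted in the excerpt — I would treat briefly. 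The genuinely delicate estimates are the quantitative resolvent/DtN bound and the CGO remainder bound in the $L^{n/2}$-potential regime; everything else is bookkeeping and exponent optimization.
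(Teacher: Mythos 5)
Your overall architecture --- a quantitative passage from the BSD to a family of DtN maps at complex energies, an Alessandrini-type identity tested against exponentially oscillating solutions, a pointwise Fourier estimate of $V_1-V_2$, and a frequency truncation followed by an optimization in the large parameter --- is exactly the paper's. But two of your concrete steps, as written, do not work. The ansatz $u_j=e^{\zeta_j\cdot x}(1+r_j)$ ignores the magnetic term (and at energy $z$ one needs $\zeta_j\cdot\zeta_j=-z$, not $0$): conjugating $\Delta_b+\lambda$ by the exponential produces a zeroth-order term $2i\,a\cdot\zeta_j$ of size $|\zeta_j|\sim\tau$, so the remainder stays of order one, and no Faddeev/Carleman estimate removes this; the cancellation of the magnetic contribution ``in the pairing'' concerns the gauge obstruction to recovering $a$, not the construction of the solutions. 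The paper instead uses phases adapted to $\nabla_a$, namely $\varphi_\pm=e^{i(\sqrt{\lambda}\,\omega\cdot x+\psi_\epsilon)}$ with $\psi_\epsilon$ solving the transport equation $(\nabla\psi_\epsilon+a_\epsilon)\cdot\omega=0$ for a mollification $a_\epsilon$ of $a$ (the mollification is forced because $\Delta\psi_\epsilon$ enters the error, and $\epsilon=\tau^{-\frac12}$ is balanced at the end). Moreover the paper never builds exact CGO solutions from a Carleman estimate: it works with the true solutions $u_1=u^{b_1}(\lambda)(\varphi_1)=\varphi_1+R^{b_1}(\lambda)(g_1\varphi_1)$ and controls the correction by the uniform $L^{p_\theta}\to L^{p'_\theta}$ resolvent bound of Lemma \ref{lem4} (applied with $\theta=\frac18$), together with \eqref{53}; this resolvent device is precisely the substitute for the Hilbertian arguments that fail for $V\in L^{m}$, so deferring it to a ``standard Faddeev-type Carleman estimate'' leaves the genuinely delicate point of the theorem as a black box, and such an estimate is not standard in this unbounded-potential magnetic setting.

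Second, your interpolation step invokes an a priori bound on $\|V_1-V_2\|_{0,\infty}$, claimed to follow from $|V_1-V_2|\le W_0\in L_\ast^n(\mathbb{R}^n,\mathbb{R})$; this is false. What the hypotheses give, and what the paper uses when it mimics the end of the proof of \cite[Theorem 1.2]{Ch1}, is that $V:=V_1-V_2$ is supported in $\Omega_0$ with $|V|\le W_0$, hence $V\in L^1\cap L^2$ with norm controlled by $W_0$: the pointwise estimate $\tilde{\mathbf{c}}|\hat V(\xi)|\le\tau^{-\frac14}+|\xi|\tau^{-1}+\tau^{\sigma+n+4}\delta(b_1,b_2)$ is integrated over $|\xi|\le\tau^\gamma$, the high-frequency tail in $H^{-1}$ is bounded using $\|V\|_{0,2}$ by $c\,\tau^{-2\gamma}$, and the minimization in $\tau$ then produces the H\"older exponent. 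With these two repairs (magnetic phase plus resolvent correction instead of a bare CGO ansatz, and the $L^1\cap L^2$ rather than $L^\infty$ a priori bound) your plan coincides with the paper's proof.
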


Before stating our second main result, we need to introduce additional notations. Fix $\mathbf{a}\in L_\ast^\infty (\mathbb{R}^n,\mathbb{R})$ and $W_0\in L_\ast^n(\mathbb{R}^n,\mathbb{R})$ non negative and non identically equal to zero. Define $\mathscr{B}$ as the set of couples 
\[
(b_1,b_2)=((a_1,V_1),(a_2,V_2))\in [\mathcal{A}\cap W_\ast^{1,\infty}(\mathbb{R}^n,\mathbb{R}^n)\times \mathcal{V}]^2
\]
 satisfying
\[
\|a_j\|_{1,\infty}\le \tilde{\mathfrak{c}},\quad j=1,2,
\]
and
\[
|a_1-a_2|\le \mathbf{a},\quad |i\mathrm{div}(a_1-a_2)-|a_1|^2+|a_2|^2+ V_1-V_2|\le W_0.
\]

The subset of $\mathscr{B}$ consisting of those couples $(b_1,b_2)=((a_1,V_1),(a_2,V_2))\in \mathscr{B}$ such that $a_1-a_2\in W_\ast^{2,\infty}(\mathbb{R}^n,\mathbb{R}^n)$, $a_1=a_2$ on $\Gamma$ and
\[
\|a_1-a_2\|_{2,\infty}\le \tilde{\mathfrak{c}}
\]
will be denoted by $\mathscr{B}_+$.

For $a=(a_1,\ldots ,a_n)\in W_\ast^{1,\infty} (\mathbb{R}^n,\mathbb{R}^n)$, the $2$-form $da$ is given by
\[
da=\sum_{k<\ell}(\partial_\ell a_k-\partial_ka_\ell)dx^\ell\wedge dx^k.
\]
We will use the notation
\[
\|da\|_{0,2}=\left(\sum_{k< \ell}\|\partial_\ell a_k-\partial_ka_\ell\|_{0,2}^2\right)^{\frac{1}{2}}.
\]

\begin{theorem}\label{thm2}
Let $(b_1,b_2)=((a_1,V_1),(a_2,V_2))\in \mathscr{B}_+$ satisfying $\delta_+(b_1,b_2)<\infty$. Then we have
\[
\|d(a_1-a_2)\|_{0,2}\le \mathbf{c}_+\delta(b_1,b_2)^{\beta_1},
\]
where
\[
\beta_1=\frac{2}{(n+2)(12(\sigma+n+4)+1)}
\]
and $\mathbf{c}_+=\mathbf{c}_+(n,\Omega,\Omega_0,\mathbf{a},V_0,W_0,\mathfrak{c},\tilde{\mathfrak{c}},t)>0$ is a constant.
\end{theorem}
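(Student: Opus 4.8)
The plan is to follow the two-step scheme used for Theorem~\ref{thm1} and in \cite{Ch1}, adding the ingredients needed to reach the gauge-invariant $2$-form $d(a_1-a_2)$ instead of an electric potential. Throughout, set $\mathcal A:=a_1-a_2\in W_\ast^{2,\infty}(\mathbb R^n,\mathbb R^n)$, which vanishes on $\Gamma$ by definition of $\mathscr B_+$, and $\mathcal Q:=i\,\mathrm{div}\,\mathcal A-|a_1|^2+|a_2|^2+V_1-V_2$, so that $|\mathcal Q|\le W_0\in L_\ast^n$; after one integration by parts (licit because $\mathcal A_{|\Gamma}=0$) the difference $A^{b_1}-A^{b_2}$ is, modulo a first-order operator with coefficient $\mathcal A$, multiplication by a function controlled by $\mathcal Q$. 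The new difficulty with respect to Theorem~\ref{thm1} is precisely this magnetic first-order term, whose gauge-invariant part is $d\mathcal A$.

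\emph{Step~1: from the boundary spectral data to an energy- or time-dependent Dirichlet-to-Neumann estimate.} For $b=(a,V)$ one has $(A^b-z)^{-1}=\sum_{k\ge1}(\lambda_k^b-z)^{-1}(\cdot|\phi_k^b)\phi_k^b$, so the Dirichlet-to-Neumann map of $(A^b-z)u=0$ and --- after a Laplace transform in time --- the lateral hyperbolic Dirichlet-to-Neumann map $\mathcal H^b$ of $\partial_t^2u+A^bu=0$ on $\Gamma\times(0,T)$, with $T$ above a geometric threshold, are expressed (up to a $z$- or $t$-independent operator depending only on the common trace $a_{|\Gamma}$) through the eigenvalues $\lambda_k^b$ and the traces $\psi_k^b=\partial_{\nu_a}\phi_k^b\in L^q(\Gamma)$. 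Combining the Weyl asymptotics $\lambda_k^b\simeq k^{2/n}$ with the a priori bounds $\|\psi_k^b\|_{0,q}\le C k^{\theta}$ furnished by the elliptic regularity of the eigenfunctions (Section~2) and by the uniform constraints in $\mathcal V$, $\mathfrak c$, $\tilde{\mathfrak c}$, the weights $k^{-2/n}$ in $\delta(b_1,b_2)$ are exactly those needed to sum the resulting series term by term; this produces an inequality $\|\mathcal H^{b_1}-\mathcal H^{b_2}\|_{\mathcal L(\mathcal X,\mathcal Y)}\le C\,\delta(b_1,b_2)$ in suitable (anisotropic) Sobolev spaces, with $C$ depending only on the admissible data. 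No exponential loss is incurred here, since the wave group is uniformly bounded on $(0,T)$.

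\emph{Step~2: geometric optics and recovery of $d\mathcal A$.} One pairs an Alessandrini-type identity --- for $u_j$ solving $(\partial_t^2+A^{b_j})u_j=0$ with matched lateral data, $\int_{\Omega\times(0,T)}\big(2i\,\mathcal A\cdot\nabla u_1-\mathcal Q\,u_1\big)\overline{u_2}\,dx\,dt$ equals a boundary pairing bounded by $\|\mathcal H^{b_1}-\mathcal H^{b_2}\|$ times traces of $u_1,u_2$ --- with geometric-optics solutions $u_j=e^{i\varrho(x\cdot\omega-t)}(\alpha_j+r_j)$, $|\omega|=1$, carrying a \emph{real} null phase, so that their lateral traces remain bounded uniformly in $\varrho$. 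Solving the transport equation for the leading amplitude $\alpha_j$ along the rays forces $\alpha_j$ to carry the line integral of $a_j\cdot\omega$, hence $\alpha_1\overline{\alpha_2}$ carries that of $\mathcal A\cdot\omega$; consequently, dividing the identity by $\varrho$ and letting $\omega$ and the transverse parameter vary, the leading term reproduces the line integrals of $\mathcal A$ over the segments crossing $\Omega$, while the electric contribution $\int\mathcal Q\,u_1\overline{u_2}$ and the lower-order parts of the transport are $O(\varrho^{-1})$. Since the kernel of this longitudinal ray transform consists of exact forms, these integrals determine $d\mathcal A$ on $\Omega$. The correctors obey $\|r_j\|\le C\varrho^{-\gamma}$ with $\gamma>0$ explicit in $n$ and $t$: this is where the criticality $V_j\in L^{n/2}$, i.e. $\mathcal Q\in L^n$, forces $L^p$--$L^{p'}$ Carleman/resolvent estimates in place of the Hilbertian ones and where the exponent $\sigma=\tfrac{12}{2-t}$ appears, while $\mathcal A\in W^{2,\infty}$ is what makes the transport equations and the absorption of the first-order term admissible.

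\emph{Step~3: optimization and interpolation.} Assembling Steps~1 and~2 gives, for each admissible segment and each large $\varrho$, a bound $\le C(\delta\,\varrho^{N}+\varrho^{-\gamma})$ for the relevant ray transform of $d\mathcal A$; inverting that transform with a stability estimate --- equivalently, running a Carleman estimate for the magnetic wave operator --- turns this into a bound for $\|d\mathcal A\|_{-s,2}$, and the a priori bound $\|d\mathcal A\|_{1,2}\le C\tilde{\mathfrak c}$ coming from $\mathcal A\in W_\ast^{2,\infty}$ then yields $\|d\mathcal A\|_{0,2}\le\|d\mathcal A\|_{-s,2}^{\vartheta}\,\|d\mathcal A\|_{1,2}^{1-\vartheta}$; optimizing $\varrho$, together with any frequency cut-off introduced in the inversion, balances $\delta\,\varrho^{N}$ against $\varrho^{-\gamma}$, and the H\"older exponent $\beta_1=\tfrac{2}{(n+2)(12(\sigma+n+4)+1)}$ emerges from this bookkeeping --- the factor $n+2$ reflecting the dimensional cost of the interpolation and $12(\sigma+n+4)+1$ that of the geometric-optics remainder in the critical class. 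I expect the main obstacle to be exactly this last quantitative geometric-optics step: with $V$ only in $L^{n/2}$ and $a$ merely Lipschitz, the classical $L^2$-WKB construction is not available, so the correctors must be built from $L^p$--$L^{p'}$ smoothing/Carleman estimates and the dependence of the decay rate $\gamma$ on $n$ and $t$ must be tracked carefully, since it is what ultimately fixes $\beta_1$; a secondary but genuine difficulty is to verify, in Step~1, that the growth rate $\theta$ of $\|\psi_k^b\|_{0,q}$ is compatible with the weights $k^{-2/n}$ so that the series defining $\mathcal H^{b_1}-\mathcal H^{b_2}$ converges with a constant depending only on the admissible data.
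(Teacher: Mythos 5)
Your route is genuinely different from the paper's, and as written it has gaps precisely at the steps that determine the quantitative exponent. The paper never passes to a hyperbolic DtN map: it links the BSD to the \emph{elliptic} family $\Lambda^b(\lambda)$ at complex energies $\lambda=(\tau+i)^2$ through the explicit series \eqref{40} for the $\lambda$-derivatives of $u^b(\lambda)(\varphi)$, resolvent bounds on the region $\Pi$, and the interpolation inequality \eqref{54}; this is exactly where the weights $k^{-2/n}$ in $\delta(b_1,b_2)$ are matched against Weyl asymptotics (Proposition \ref{pro2}) and where the exponent $\sigma=\frac{12}{2-t}$ originates. Your Step~1 instead asserts $\|\mathcal H^{b_1}-\mathcal H^{b_2}\|\le C\,\delta(b_1,b_2)$ for a lateral wave DtN map; with only $k^{-2/n}$ weights and $L^q(\Gamma)$ control of $\psi_k^{b_1}-\psi_k^{b_2}$ the convergence of the corresponding time-dependent series is not clear and is not established anywhere in the paper or in the cited references for this class of data, and you yourself flag it as unresolved. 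Your Step~2 is the more serious gap: constructing geometric-optics solutions with quantified correctors $\|r_j\|\le C\varrho^{-\gamma}$ for $\partial_t^2+A^{b_j}$ with $V_j$ only in $L^{n/2}$ and $a_j$ Lipschitz is not available from known estimates, and the whole point of the paper's elliptic setup is to avoid it: the paper works with complex-phase solutions $\varphi_\pm=e^{i[\sqrt\lambda\,\omega\cdot x+\psi_\epsilon]}$ whose phase correction $\psi_\epsilon$ is built from a mollified potential $a_\epsilon$ (with $\epsilon=\tau^{-1/3}$), and the remainders are absorbed by the $L^{p_\theta}\to L^{p'_\theta}$ resolvent bound \eqref{27} (Riesz--Thorin), applied with $\theta=\tfrac18$, which is the estimate that survives the critical class. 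Since $\sigma$ and the $\tau^{-1/12}$ loss in \eqref{ma2} are artifacts of this elliptic machinery, your claim that the same $\beta_1=\frac{2}{(n+2)(12(\sigma+n+4)+1)}$ would ``emerge from the bookkeeping'' of a wave-equation remainder is unsupported; the exponent is asserted, not derived.

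A further structural difference: the paper does not invert a ray transform at all. Choosing $\omega_1,\omega_2$ adapted to a Fourier variable $\xi$ and the directions $\eta_{k,\ell}$, and invoking the computation of \cite[Lemma 4.1]{Ki}, the integral identity \eqref{1.13} yields directly a pointwise bound on $\mathscr F(\partial_\ell a_k-\partial_k a_\ell)(\xi)$ for $\tau\ge\max(\tau_\ast,|\xi|^8)$; the $L^2$ estimate of $d(a_1-a_2)$ then follows from Parseval with the frequency cutoff $|\xi|\le\tau^{\gamma}$, $\gamma=\frac{1}{6(n+2)}$, and the a priori $H^1$ bound coming from $a_1-a_2\in W^{2,\infty}_\ast$, followed by optimization in $\tau$. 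Your Step~3 would instead need a quantitative stability estimate for the longitudinal X-ray transform on one-forms (modulo exact forms) together with a negative-norm interpolation, neither of which you supply, and which would in any case produce a different exponent. So while the hyperbolic/ray-transform strategy is a legitimate program in the spirit of Alessandrini--Sylvester, as a proof of this theorem it is incomplete at Steps~1--3, and the specific constant and exponent in the statement cannot be obtained from it as sketched.
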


We conclude this section with the following uniqueness of the determination of $b=(a,V)$ from the corresponding BSD.

\begin{corollary}\label{coroll}
Assume that $\mathbb{R}^n\setminus \Omega$ is connected. Let $b_j=(a_j,V_j)\in \mathcal{A}_0\times L_\ast^m(\mathbb{R}^n)$, $j=1,2$, be such that $V_1-V_2\in L_\ast^n(\mathbb{R}^n)$, $a_1-a_2\in W_\ast^{2,\infty}(\mathbb{R}^n,\mathbb{R}^n)$ and $\mathrm{supp}(a_1-a_2)\subset \Omega$.
If 
\[
\lambda_k^{b_1}=\lambda_k^{b_2},\quad \psi_k^{b_1}=\psi_k^{b_2},\quad k\ge 1,
\]
and
\[
\sum_{k\ge 1}k^{-\frac{2}{n}}\|\phi_k^{b_1}-\phi_k^{b_2}\|_{2,q}<\infty,
\]
then $da_1=da_2$ and $V_1=V_2$.
\end{corollary}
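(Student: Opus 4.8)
The plan is to deduce Corollary \ref{coroll} from Theorems \ref{thm1} and \ref{thm2} by checking that, under the stated hypotheses, the pair $(b_1,b_2)$ can be placed (after a harmless rescaling of the various structural constants) in the admissible classes of both theorems, and that the vanishing of the BSD forces $\delta(b_1,b_2)=0$. First I would observe that since $a_1-a_2$ is compactly supported in $\Omega$, smooth enough ($W^{2,\infty}_\ast$), and $V_1-V_2\in L^n_\ast$, we may choose the auxiliary functions $\mathbf{a}$, $W_0$ (and enlarge $\tilde{\mathfrak c}$ and the various radii) so that $|a_1-a_2|\le \mathbf a$, $|i\mathrm{div}(a_1-a_2)-|a_1|^2+|a_2|^2+V_1-V_2|\le W_0$, and $\|a_1-a_2\|_{2,\infty}\le\tilde{\mathfrak c}$ all hold; together with $a_j\in\mathcal A_0$ (so $\|a_{j|\Omega}\|_{0,n}\le\mathfrak c$ and $\|a_j\|_{1,\infty}\le\tilde{\mathfrak c}$, again after possibly enlarging $\tilde{\mathfrak c}$) and $a_1=a_2$ on $\Gamma$ (immediate from $\mathrm{supp}(a_1-a_2)\subset\Omega$), this shows $(b_1,b_2)\in\mathscr B_+$. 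Note the constants produced in the theorems depend on these chosen bounds, but that is irrelevant for a pure uniqueness statement.

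Next I would verify the spectral hypothesis. The assumption $\sum_{k\ge1}k^{-2/n}\|\phi_k^{b_1}-\phi_k^{b_2}\|_{2,q}<\infty$ together with $\lambda_k^{b_1}=\lambda_k^{b_2}$ for all $k$ gives precisely $\delta_+(b_1,b_2)<\infty$. Moreover $\lambda_k^{b_1}=\lambda_k^{b_2}$ and $\psi_k^{b_1}=\psi_k^{b_2}$ for all $k$ give $\delta(b_1,b_2)=\sum_{k\ge1}k^{-2/n}\big[|\lambda_k^{b_1}-\lambda_k^{b_2}|+\|\psi_k^{b_1}-\psi_k^{b_2}\|_{0,q}\big]=0$. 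Applying Theorem \ref{thm2} then yields $\|d(a_1-a_2)\|_{0,2}\le\mathbf c_+\cdot 0^{\beta_1}=0$, hence $da_1=da_2$.

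It remains to recover $V_1=V_2$, and this is where the gauge issue enters. Knowing $da_1=da_2$ and that $a_1-a_2$ is compactly supported in $\Omega$ with $\mathbb R^n\setminus\Omega$ connected, the closed $1$-form $a_1-a_2$ is exact: $a_1-a_2=\nabla\varphi$ for some $\varphi$, and since $a_1-a_2$ vanishes on the connected unbounded set $\mathbb R^n\setminus\overline\Omega$, $\varphi$ is constant there, so (adjusting the constant) $\varphi\in W^{1,\infty}$ with $\mathrm{supp}(\nabla\varphi)\subset\Omega$ and $\varphi=0$ near $\Gamma$. The standard gauge transformation $u\mapsto e^{i\varphi}u$ intertwines the operators $A^{b_1}$ and $A^{(a_2,V_1)}$ (because conjugation by $e^{i\varphi}$ turns $\nabla_{a_1}$ into $\nabla_{a_2}$ and leaves the potential term untouched), and since $\varphi=0$ near $\Gamma$ it preserves both eigenvalues and Neumann traces of eigenfunctions: $\lambda_k^{(a_2,V_1)}=\lambda_k^{b_1}=\lambda_k^{b_2}$ and $\partial_{\nu_{a_2}}\phi_k^{(a_2,V_1)}=\partial_{\nu_{a_2}}\phi_k^{b_2}$, with the analogous finiteness $\delta_+((a_2,V_1),b_2)<\infty$ (the map $u\mapsto e^{i\varphi}u$ is bounded on $W^{2,q}$). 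Now $(a_2,V_1)$ and $b_2=(a_2,V_2)$ share the same magnetic potential $a_2\in\mathcal A_0$, with $V_1-V_2\in L^n_\ast$, so $(V_1,V_2)\in\mathcal V_0$ after enlarging $W_0$; Theorem \ref{thm1} then gives $\|V_1-V_2\|_{-1,2}\le\mathbf c_\ast\cdot 0^{\beta_0}=0$, i.e. $V_1=V_2$.

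The main obstacle is the gauge step: one must argue carefully that $a_1-a_2$ is exact (using $H^1_{\mathrm{dR}}$ triviality away from a compact set, or equivalently Poincaré's lemma on the complement of $\Omega$ plus the connectedness of $\mathbb R^n\setminus\Omega$) and that the gauge function $\varphi$ genuinely vanishes in a neighbourhood of $\Gamma$ so that the gauge transformation acts trivially on the BSD; one also needs the gauge-invariance identity $e^{-i\varphi}A^{(a_1,V)}e^{i\varphi}=A^{(a_2,V)}$ when $a_1-a_2=\nabla\varphi$, which is a routine computation with the sesquilinear form $S^b$ but should be stated. Everything else is bookkeeping: verifying membership in the admissible classes (harmlessly enlarging constants) and chasing the two zeros through the Hölder inequalities of Theorems \ref{thm1} and \ref{thm2}.
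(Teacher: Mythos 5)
Your overall route coincides with the paper's (whose proof is only a two-line sketch: apply Theorem \ref{thm2} to get $da_1=da_2$, then adapt the gauge argument of Kian's Theorem 1.1 so as to conclude $V_1=V_2$ from Theorem \ref{thm1}), and most of your bookkeeping is sound: placing $(b_1,b_2)$ in $\mathscr{B}_+$ after choosing $V_0,W_0,\mathbf a$ and enlarging $\tilde{\mathfrak c}$, observing $\delta(b_1,b_2)=0$ so that Theorem \ref{thm2} gives $da_1=da_2$, and constructing the gauge $\varphi$ with $\nabla\varphi=a_1-a_2$, vanishing near $\Gamma$, from the connectedness of $\mathbb{R}^n\setminus\Omega$ and $\mathrm{supp}(a_1-a_2)\subset\Omega$ (up to an irrelevant sign in $e^{\pm i\varphi}$).

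The genuine gap is your claim that $\delta_+((a_2,V_1),b_2)<\infty$ ``because $u\mapsto e^{i\varphi}u$ is bounded on $W^{2,q}$''. The eigenfunctions of $(a_2,V_1)$ produced by the gauge transformation are $e^{-i\varphi}\phi_k^{b_1}$, and the quantity entering $\delta_+$ is $\|e^{-i\varphi}\phi_k^{b_1}-\phi_k^{b_2}\|_{2,q}$, not $\|e^{-i\varphi}(\phi_k^{b_1}-\phi_k^{b_2})\|_{2,q}$. Writing $e^{-i\varphi}\phi_k^{b_1}-\phi_k^{b_2}=e^{-i\varphi}(\phi_k^{b_1}-\phi_k^{b_2})+(e^{-i\varphi}-1)\phi_k^{b_2}$, the second term has $W^{2,q}$-norm of order $1+|\lambda_k^{b_2}|\sim k^{2/n}$ by \eqref{11} and \eqref{8} (and this order is sharp in general when $\varphi\not\equiv 0$), so the weighted series $\sum_k k^{-\frac{2}{n}}\|e^{-i\varphi}\phi_k^{b_1}-\phi_k^{b_2}\|_{2,q}$ has no reason to converge; thus the hypothesis $\delta_+<\infty$ of Theorem \ref{thm1} cannot be verified for the transformed pair, and the theorem cannot be invoked as a black box at this point. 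The repair — which is what the paper's phrase ``slightly modify the proof'' of \cite[Theorem 1.1]{Ki} is hiding — is to gauge-transform the DtN maps rather than the eigenfunctions: since $\varphi=0$ near $\Gamma$, one has $\Lambda^{(a_2,V_1)}(\lambda)=\Lambda^{(a_1,V_1)}(\lambda)$ exactly, so Proposition \ref{pro2} applied to the original pair $(b_1,b_2)$ (which does satisfy $\delta_+(b_1,b_2)<\infty$ and has $\delta(b_1,b_2)=0$) yields $\|\Lambda^{(a_2,V_1)}(\lambda)-\Lambda^{(a_2,V_2)}(\lambda)\|\le\mathbf{c}\,\tau^{-3}$; one then reruns the CGO part of the proof of Theorem \ref{thm1} (identity \eqref{1.13} with the two magnetic potentials equal to $a_2$) and lets $\tau\to\infty$ to get $\widehat{V_1-V_2}=0$, i.e. $V_1=V_2$. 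With that substitution your argument is complete and matches the intended proof.
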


\begin{proof}
By first applying Theorem \ref{thm2}, we obtain $da_1=da_2$. We then slightly modify the proof \cite[Theorem 1.1]{Ki} to obtain from Theorem \ref{thm1} that $V_1=V_2$.
\end{proof}

Under additional conditions, we can reduce the BSD, as shown in the following result.

\begin{corollary}
Let $\Omega_0$ be an open neighborhood of $\Gamma$ in $\overline{\Omega}$ such that $\tilde{\Omega}=\Omega \setminus \overline{\Omega_0}\ne\emptyset$ and let $S$ be a nonempty open subset of $\Gamma$. In addition of the assumptions of Corollary \ref{coroll}, assume that $V_1=V_2:=V\in L^n(\Omega_0)$ and $a_1{_{|\Omega_0}}=a_2{_{|\Omega_0}}:=a$.
If 
\[
\lambda_k^{b_1}=\lambda_k^{b_2},\quad \psi_k^{b_1}{_{|S}}=\psi_k^{b_2}{_{|S}},\quad k\ge 1,
\]
and
\[
\sum_{k\ge 1}k^{-\frac{2}{n}}\|\phi_k^{b_1}-\phi_k^{b_2}\|_{2,q}<\infty,
\]
then $da_1=da_2$ and $V_1=V_2$.
\end{corollary}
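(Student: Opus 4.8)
The plan is to reduce this last corollary to Corollary \ref{coroll} by an Aronszajn-type unique continuation argument across the region $\tilde\Omega$, following the standard scheme for inverse spectral problems with partial boundary data. The idea is that the hypotheses $V_1=V_2$ and $a_1=a_2$ on the collar $\Omega_0$, together with the identity of eigenvalues and of the normal derivatives on the subboundary $S$, force the BSD to agree on \emph{all} of $\Gamma$, after which one invokes the already-proved Corollary \ref{coroll}.

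First I would record that since $b_1$ and $b_2$ coincide on $\Omega_0$, each eigenfunction $\phi_k^{b_j}$ satisfies, on the open set $\Omega_0$, the \emph{same} magnetic Schr\"odinger equation $(-\Delta_a - V - \lambda_k)\phi_k^{b_j}=0$ (using $\lambda_k^{b_1}=\lambda_k^{b_2}=:\lambda_k$). Set $w_k:=\phi_k^{b_1}-\phi_k^{b_2}$; then $w_k$ solves the magnetic Schr\"odinger equation with coefficients $(a,V)$ on $\Omega_0$, and on $S\subset\Gamma$ we have both $w_k=0$ (Dirichlet condition, since both $\phi_k^{b_j}\in H_0^1(\Omega)$) and $\partial_{\nu_a}w_k = \psi_k^{b_1}{_{|S}}-\psi_k^{b_2}{_{|S}}=0$. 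Hence $w_k$ has vanishing Cauchy data on $S$. By the unique continuation property for second-order elliptic operators with the regularity at hand (here $a\in W^{1,\infty}$, $V\in L^n$; the relevant UCP is available, e.g. via Carleman estimates for magnetic Schr\"odinger operators with $L^{n}$ potentials), $w_k\equiv 0$ throughout the connected component of $\Omega_0$ that touches $S$, and by propagating across $\Omega_0$ (which is a connected neighborhood of $\Gamma$) one gets $w_k=0$ on a neighborhood of all of $\Gamma$. In particular $\partial_{\nu_a}\phi_k^{b_1}=\partial_{\nu_a}\phi_k^{b_2}$ on the \emph{whole} of $\Gamma$, i.e. $\psi_k^{b_1}=\psi_k^{b_2}$ for every $k\ge 1$.

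Once the full boundary spectral data are shown to agree, the remaining hypotheses of Corollary \ref{coroll} are met: $\mathbb{R}^n\setminus\Omega$ is connected, $b_j=(a_j,V_j)\in\mathcal A_0\times L_\ast^m(\mathbb R^n)$, $V_1-V_2\in L_\ast^n(\mathbb R^n)$, $a_1-a_2\in W_\ast^{2,\infty}(\mathbb R^n,\mathbb R^n)$ with support in $\Omega$, the eigenvalues coincide, and now the normal derivatives coincide on all of $\Gamma$; the summability condition $\sum_k k^{-2/n}\|\phi_k^{b_1}-\phi_k^{b_2}\|_{2,q}<\infty$ is assumed. Applying Corollary \ref{coroll} yields $da_1=da_2$ and $V_1=V_2$, which is the desired conclusion.

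The main obstacle is the unique continuation / Runge-approximation step: one must be careful that the UCP used is valid for the magnetic Schr\"odinger operator with only $W^{1,\infty}$ magnetic potential and $L^{n}$ electric potential (the ``optimal'' class of this paper), and that the geometry of $\Omega_0$ as a neighborhood of $\Gamma$ indeed lets Cauchy data vanishing on the nonempty open piece $S$ propagate to give $w_k=0$ near the entire boundary — this requires $\Omega_0$ (or at least the connected component meeting $S$) to reach every boundary point, which should be arranged by the hypothesis that $\Omega_0$ is a neighborhood of $\Gamma$ in $\overline\Omega$ together with connectedness of $\Gamma$; if $\Gamma$ has several components one argues component by component, using that each is met by $\Omega_0$. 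A secondary point is bookkeeping of gauge invariance: the objects $\psi_k^{b}=\partial_{\nu_a}\phi_k^{b}$ depend on the gauge of $a$, but since $a_1=a_2$ on $\Omega_0\supset$ (a neighborhood of) $\Gamma$ this causes no difficulty here.
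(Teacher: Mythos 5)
Your argument is correct and is essentially the paper's own proof: the difference $w_k=\phi_k^{b_1}-\phi_k^{b_2}$ solves the same magnetic Schr\"odinger equation in the collar $\Omega_0$ (since $a_1=a_2$, $V_1=V_2$ there and $\lambda_k^{b_1}=\lambda_k^{b_2}$) with vanishing Cauchy data on $S$, so unique continuation from the Cauchy data on $S$ (the paper invokes \cite[Theorem 11.3]{CT} for exactly this step) gives $\phi_k^{b_1}=\phi_k^{b_2}$ on $\Omega_0$, hence $\psi_k^{b_1}=\psi_k^{b_2}$ on all of $\Gamma$, and Corollary \ref{coroll} concludes. Your additional remarks on the admissible regularity for the UCP and on the connectedness of $\Omega_0$ are reasonable caveats, but they do not change the route, which matches the paper's.
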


\begin{proof}
Let $k\ge 1$. We verify that
\[
\Delta_b(\phi_k^{b_1}-\phi_k^{b_2})=0\quad  \mathrm{in}\; \tilde{\Omega},
\]
where $b=(a,V)$. In light of $\phi_k^{b_1}{_{|S}}=\phi_k^{b_2}{_{|S}}$ and $\psi_k^{b_1}{_{|S}}=\psi_k^{b_2}{_{|S}}$, it follows from \cite[Theorem 11.3]{CT} (unique continuation from the Cauchy data on $S$) that  $\phi_k^{b_1}{_{|\Omega_0}}=\phi_k^{b_2}{_{|\Omega_0}}$. In particular, we have $\psi_k^{b_1}=\psi_k^{b_2}$. We then apply Corollary \ref{coroll} to complete the proof.
\end{proof}

\section{Spectral analysis and resolvent estimates}

\subsection{Spectral analysis of the magnetic Schr\"odinger operator}

The following useful lemma corresponds to \cite[Lemma 2.1]{Ch1}.

\begin{lemma}\label{lem8}
For all $\epsilon >0$ there exists a constant $\mathbf{c}_\epsilon=\mathbf{c}_\epsilon (n,\Omega, V_0,\epsilon)>0$ such that
\[
\|V|u|^2\|_{0,1}\le\epsilon \|\nabla u\|_{0,2}+\mathbf{c}_\epsilon \|u\|_{0,2},\quad V\in \mathcal{V},\; u\in H_0^1(\Omega). 
\]
\end{lemma}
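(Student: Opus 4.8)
The plan is to run the classical relative form-boundedness argument: split $V_0$ into a bounded part plus an $L^m$-small part, control the bounded part against $\|u\|_{0,2}$, and absorb the $L^m$-small part into $\|\nabla u\|_{0,2}$ by means of H\"older's inequality \eqref{41} together with the Sobolev embedding $H_0^1(\Omega)\hookrightarrow L^{p'}(\Omega)$ (which is exactly what the finiteness of $\kappa$ records). Since every $V\in\mathcal{V}$ satisfies $|V|\le V_0$ pointwise, $\|V|u|^2\|_{0,1}\le\|V_0|u|^2\|_{0,1}$, so it is enough to prove the inequality with $V$ replaced by $V_0$; in particular the resulting constant will depend only on $n,\Omega,V_0,\epsilon$.

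First I would fix $\epsilon>0$ and, for $N>0$, write $V_0=V_0^N+R_N$ with $V_0^N:=\min(V_0,N)$ and $R_N:=(V_0-N)_+\ge 0$, so that $0\le V_0^N\le N$ and $0\le R_N\le V_0$ pointwise. Since $R_N\to 0$ almost everywhere as $N\to\infty$ and $R_N^m\le V_0^m\in L^1(\mathbb{R}^n)$, the dominated convergence theorem gives $\|R_N\|_{0,m}\to 0$; hence I choose $N=N(n,\Omega,V_0,\epsilon)$ large enough that $\kappa^2\|R_N\|_{0,m}\le\epsilon$.

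Then, for $u\in H_0^1(\Omega)$, the bounded part is trivial: $\|V_0^N|u|^2\|_{0,1}\le N\|u\|_{0,2}^2$. For the remaining part I apply \eqref{41} with $r=1$ and $r_1=m$, hence $r_2=m'=\frac{n}{n-2}$; since $2m'=p'$ this gives $\|R_N|u|^2\|_{0,1}\le\|R_N\|_{0,m}\,\||u|^2\|_{0,m'}=\|R_N\|_{0,m}\|u\|_{0,p'}^2$, and the definition of $\kappa$ yields $\|u\|_{0,p'}\le\kappa\|\nabla u\|_{0,2}$, so that $\|R_N|u|^2\|_{0,1}\le\kappa^2\|R_N\|_{0,m}\|\nabla u\|_{0,2}^2\le\epsilon\|\nabla u\|_{0,2}^2$. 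Adding the two contributions and setting $\mathbf{c}_\epsilon:=N$ gives $\|V_0|u|^2\|_{0,1}\le\epsilon\|\nabla u\|_{0,2}^2+\mathbf{c}_\epsilon\|u\|_{0,2}^2$, whence the same bound for every $V\in\mathcal{V}$ by the comparison above.

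There is no substantial obstacle; the one point to be respected is the balance of exponents $\frac1m+\frac2{p'}=1$ in the H\"older step, which is precisely why $L^{n/2}=L^m$ is the borderline class for the potential and why the finiteness of $\kappa$ (valid for any bounded Lipschitz $\Omega$, $p'=\frac{2n}{n-2}$ being the critical Sobolev exponent) is the exact input required. The convergence $\|R_N\|_{0,m}\to 0$ is the mechanism that makes the coefficient of $\|\nabla u\|_{0,2}^2$ arbitrarily small, at the price of a large but finite coefficient of $\|u\|_{0,2}^2$.
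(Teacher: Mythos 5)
Your argument is correct and is exactly the standard relative-bound argument expected here; the paper gives no proof of its own, citing \cite[Lemma 2.1]{Ch1}, whose proof is of the same type (truncate $V_0$ into a bounded piece plus a piece small in $L^m$, bound the latter by H\"older together with the embedding $H_0^1(\Omega)\hookrightarrow L^{p'}(\Omega)$ encoded in $\kappa$, and absorb it into the gradient term). One remark: you prove the inequality with $\|\nabla u\|_{0,2}^2$ and $\|u\|_{0,2}^2$, which is the correct form --- the statement as printed, without the squares, cannot hold (replace $u$ by $tu$ and let $t\to\infty$), and it is the squared version \eqref{4} that the paper actually uses in the sequel.
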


For $b=(a,V)\in \mathcal{A}\times \mathcal{V}$, let $S^b$ be the sesquilinear form given by \eqref{ses}. That is,
\[
S^b(u,v)=\int_\Omega \left[ \nabla_au\cdot \overline{\nabla_av}-Vu\overline{v}\right]dx,\quad u,v\in H_0^1(\Omega).
\]

Let $u,v\in H_0^1(\Omega)$. Applying twice \eqref{41}, we obtain
\begin{align*}
\|Vu\overline{v}\|_{0,1}&=\|[|V|^{\frac{1}{2}}u][|V|^{\frac{1}{2}}\overline{v}]\|_{0,1}
\\
&\le \||V|^{\frac{1}{2}}u\|_{0,2}\||V|^{\frac{1}{2}}\overline{v}]\|_{0,2}
\\
&\le \|V_0\|_{0,m}  \|u\|_{0,p'} \|v\|_{0,p'}.
\end{align*}
Therefore, we have
\begin{equation}\label{2}
\|Vu\overline{v}\|_{0,1}\le \kappa ^2 \|V_0\|_{0,m}  \|\nabla u\|_{0,2} \|\nabla v\|_{0,2}.
\end{equation}

On the other hand, in light of \eqref{1}, applying Cauchy-Schwarz's inequality, we get
\begin{equation}\label{3}
\|\nabla_au\cdot \overline{\nabla_av}\|_{0,1}\le \mathfrak{c}_+^2\|\nabla u\|_{0,2}\|\nabla v\|_{0,2}.
\end{equation}
\eqref{2} and \eqref{3} show that $S^b$ is continuous on $H_0^1(\Omega)\times H_0^1(\Omega)$. Here and in what follows, we often endow $H_0^1(\Omega)$ with the norm $\|\nabla \cdot\|_{0,2}$ which is, of course, equivalent to the norm $\|\cdot \|_{1,2}$.

Next, we prove that $S^b$ is coercive. From Lemma \ref{lem8}, for all $\epsilon >0$, there exists a constant $\mathbf{c}_\epsilon=\mathbf{c}_\epsilon(n,\Omega, V_0,\epsilon)>0$ such that we have
\begin{equation}\label{4}
\|V|u|^2\|_{0,1}\le \epsilon \|\nabla u\|_{0,2}^2+\mathbf{c}_\epsilon\|u\|_{0,2}^2,\quad u\in H_0^1(\Omega).
\end{equation}

Combining \eqref{1} and \eqref{4}, we obtain
\[
S^b(u,u)\ge \mathfrak{c}_-^2\|\nabla u\|_{0,2}^2-\epsilon \|\nabla u\|_{0,2}^2-\mathbf{c}_\epsilon\|u\|_{0,2}^2,\quad u\in H_0^1(\Omega).
\]
Choosing $\epsilon=\frac{\mathfrak{c}_-^2}{2}$ in the inequality above gives
\begin{equation}\label{5}
S^b(u,u)\ge \frac{\mathfrak{c}_-^2}{2}\|\nabla u\|_{0,2}^2-\mathbf{c}\|u\|_{0,2}^2,\quad u\in H_0^1(\Omega).
\end{equation}
Here and henceforth, $\mathbf{c}=\mathbf{c}(n,\Omega, V_0,\mathfrak{c})>0$ denotes a generic constant. In other words, we established that $S^b$ is coercive.

Let $A^b$ be defined by \eqref{op}:  $A^b:H_0^1(\Omega)\rightarrow H^{-1}(\Omega)$ is given as follows
\[
\langle A^bu,\overline{v}\rangle=S^b(u,v),\quad u,v\in H_0^1(\Omega).
\]
The operator $A^b$ is clearly self-adjoint. Applying \cite[Theorem 2.37]{Mc}, we conclude that the spectrum of $A^b$, denoted $\sigma(A^b)$, is reduced to a sequence of eigenvalues $(\lambda_k^b)$ satisfying
\[
-\infty <\lambda_1^b\le \lambda_2^b\le \ldots \le \lambda_k^b\le \ldots ,\quad \lim_{k\rightarrow \infty}\lambda_k^b=\infty.
\]
Moreover, $L^2(\Omega)$ admits an orthonormal basis consisting of a sequence of eigenfunctions $(\phi_k^b)$ such that $\phi_k^b\in H_0^1(\Omega)$ and \eqref{9} holds.

It follows from \eqref{1} and \eqref{2} that
\begin{equation}\label{6}
S^b(u,u)\le (\mathfrak{c}_++\kappa^2\|V_0\|_{0,m})\|\nabla u\|_{0,2}^2, \quad u\in H_0^1(\Omega).
\end{equation}

In the following, $A^0$ will be $A^b$ when $b=0$. The corresponding sequence of eigenvalues will be denoted by $(\lambda_k^0)$. In view of \eqref{5} and \eqref{6}, we obtain from the min-max principle
\[
\sigma^{-1}\lambda_k^0-\sigma_0 \le \lambda_k^b\le \sigma \lambda_k^0,\quad k\ge 1.
\]
Here $\sigma_0(n,\Omega,V_0,\mathfrak{c})>0$ and $\sigma=\sigma(n,\Omega,V_0,\mathfrak{c})>1$ are constants. 

Replacing $\sigma_0$ and $\sigma$ with similar constants, the inequality above together with Weyl's asymptotic formula for $A^0$ yield
\begin{equation}\label{8}
\sigma^{-1}k^{\frac{2}{n}}-\sigma_0 \le \lambda_k^b\le \sigma k^{\frac{2}{n}},\quad k\ge 1.
\end{equation}

\subsection{Regularity of eigenfunctions}

In this subsection, $\Omega$ is of class $C^{1,1}$. Recall that
\[
\mathcal{A}_+:=\{a\in W_\ast ^{1,m}(\mathbb{R}^n,\mathbb{R}^n),\; \|a_{|\Omega}\|_{0,n}\le \mathfrak{c},\; \|a\|_{1,m}\le \tilde{\mathfrak{c}}\}.
\]
For $b=(a,V)\in \mathcal{A}_+\times \mathcal{V}$, define the magnetic Schr\"odinger operator $\Delta_b$ as follows
\[
\Delta_b
=\nabla_a\cdot \nabla_a+V=\Delta +2ia\cdot \nabla+V_a,
\]
where
\[
V_a:=i\mathrm{div}(a) -|a|^2+V.
\]

Let $k\ge 1$. Applying \eqref{5} with $u=\phi_k^b$ and \eqref{9} with $v=\phi_k^b$, we obtain
\[
\frac{\mathfrak{c}_-^2}{2}\|\nabla \phi_k^b\|_{0,2}^2\le \mathbf{c}+\lambda_k^b.
\]
Hence
\begin{equation}\label{10}
\|\phi_k^b\|_{0,p}+ \|\nabla \phi_k^b\|_{0,2}\le \mathbf{c}(1+|\lambda_k^b|)^{\frac{1}{2}}.
\end{equation}

Using again \eqref{9}, we get that $-\Delta_b \phi_k^b=\lambda_k^b\phi_k^b$ in the distributional sense and then
\[
-\Delta \phi_k^b=2ia\cdot \nabla \phi_k^b+ (V -|a|^2+i\mathrm{div}(a))\phi_k^b+\lambda_k^b\phi_k^b.
\]
That is we have
\begin{align*}
&-\Delta \Re \phi_k^b=-2a\cdot \nabla \Im \phi_k^b+ (V -|a|^2)\Re \phi_k^b-\mathrm{div}(a)\Im \phi_k^b+\lambda_k^b\Re\phi_k^b\in L^p(\Omega),
\\
&-\Delta \Im \phi_k^b=2a\cdot \nabla \Re \phi_k^b+ (V -|a|^2)\Im \phi_k^b+\mathrm{div}(a)\Re \phi_k^b+\lambda_k^b\Im\phi_k^b\in L^p(\Omega).
\end{align*}

Applying \cite[Theorem 9.15]{GT} to both $\Re \phi_k^b$ and $\Im \phi_k^b$, we get $\phi_k^b\in W^{2,p}(\Omega)$. In light of \eqref{10}, \cite[Theorem 9.14]{GT} applied to both $\Re \phi_k^b$ and $\Im \phi_k^b$ gives
\begin{equation}\label{11}
\|\phi_k^b\|_{2,p}\le \tilde{\mathbf{c}}(1+|\lambda_k^b|).
\end{equation}
Here and henceforth, $\tilde{\mathbf{c}}=\tilde{\mathbf{c}}(n,\Omega,V_0,\mathfrak{c},\tilde{\mathfrak{c}})>0$ denotes a generic constant.

\subsection{Resolvent estimates}

Let $b=(a,V)\in \mathcal{A}\times \mathcal{V}$. As usual, the resolvent set of $A^b$ is $\rho(A^b):=\mathbb{C}\setminus\sigma(A^b)$. From its definition, $A^b-\lambda:H_0^1(\Omega)\rightarrow H^{-1}(\Omega)$ is an isomorphism and
\[
R^b(\lambda):= (A^b-\lambda)^{-1}: H^{-1}(\Omega)  \rightarrow H_0^1(\Omega),\quad \lambda\in\rho(A^b),
\]
is called the resolvent of $A^b$. Therefore, we have
\[
S^b (R^b(\lambda )f,v)-\lambda (R^b(\lambda )f|v)=\langle f,\overline{v}\rangle ,\quad f\in H^{-1}(\Omega),\; v\in H_0^1(\Omega).
\]
In particular, the following formula holds
\begin{equation}\label{12}
\int_\Omega \left[|\nabla_aR^b(\lambda )f|^2-(V+\lambda)|R^b(\lambda )f|^2\right]dx=\int_\Omega f\overline{R^b(\lambda )f}dx, \quad f\in L^2(\Omega).
\end{equation}

Let
\[
\Sigma_0=\left\{\lambda \in \mathbb{C};\; |\Re \lambda|\ge 1,\; |\Im \lambda|\ge 1,\; |\Re \lambda||\Im \lambda|^{-1}\ge \frac{1}{2}\right\}.
\]

\begin{lemma}\label{lem1}
Let $b=(a,V)\in \mathcal{A}\times \mathcal{V}$, $\lambda\in \Sigma_0$ and $f\in L^2(\Omega)$. Then the following inequalities hold.
\begin{align}
&\|R^b(\lambda)f\|_{0,2}\le |\Im \lambda|^{-1}\|f\|_{0,2},\label{13}
\\
&\|\nabla R^b(\lambda)f\|_{0,2}\le \mathbf{c}|\Re \lambda|^{\frac{1}{2}}|\Im \lambda|^{-1}\|f\|_{0,2},\label{14}
\\
&\|\nabla R^b(\lambda)f\|_{0,2}\le \mathbf{c}|\Re \lambda||\Im \lambda|^{-1}\|f\|_{0,p}.\label{15}
\end{align}
\end{lemma}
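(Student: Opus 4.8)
The plan is to derive all three estimates from the basic resolvent identity \eqref{12}, by taking real and imaginary parts and exploiting the structure of the sector $\Sigma_0$.

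First I would prove \eqref{13}. Taking the imaginary part of \eqref{12} (recall $V$ is real and $a$ is real-valued, so $\int_\Omega |\nabla_a R^b(\lambda)f|^2\,dx$ and $\int_\Omega V|R^b(\lambda)f|^2\,dx$ are real), one gets
\[
-\Im\lambda\,\|R^b(\lambda)f\|_{0,2}^2=\Im\int_\Omega f\,\overline{R^b(\lambda)f}\,dx,
\]
so $|\Im\lambda|\,\|R^b(\lambda)f\|_{0,2}^2\le \|f\|_{0,2}\|R^b(\lambda)f\|_{0,2}$ by Cauchy--Schwarz, which gives \eqref{13} after dividing.

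Next, for \eqref{14}, I would take the real part of \eqref{12}:
\[
\|\nabla_a R^b(\lambda)f\|_{0,2}^2=\Re\lambda\,\|R^b(\lambda)f\|_{0,2}^2+\int_\Omega V|R^b(\lambda)f|^2\,dx+\Re\int_\Omega f\,\overline{R^b(\lambda)f}\,dx.
\]
Here Lemma \ref{lem8} is the key tool: choosing $\epsilon$ small (say $\epsilon=\mathfrak{c}_-^2/2$) absorbs the gradient part of $\int_\Omega V|R^b(\lambda)f|^2\,dx$ into the left side after invoking the lower bound \eqref{1} ($\|\nabla_a w\|_{0,2}\ge \mathfrak{c}_-\|\nabla w\|_{0,2}$), leaving only a term $\mathbf{c}_\epsilon\|R^b(\lambda)f\|_{0,2}^2$. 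Then I bound $|\Re\lambda|\,\|R^b(\lambda)f\|_{0,2}^2$ and $\mathbf{c}_\epsilon\|R^b(\lambda)f\|_{0,2}^2$ using \eqref{13} (note $|\Re\lambda|\ge 1$, so the $\mathbf{c}_\epsilon$ term is dominated by the $|\Re\lambda|$ term), and bound $\Re\int_\Omega f\overline{R^b(\lambda)f}\,dx\le \|f\|_{0,2}\|R^b(\lambda)f\|_{0,2}\le |\Im\lambda|^{-1}\|f\|_{0,2}^2$. Altogether this yields $\|\nabla R^b(\lambda)f\|_{0,2}^2\le \mathbf{c}|\Re\lambda|\,|\Im\lambda|^{-2}\|f\|_{0,2}^2$ (using $|\Re\lambda|\,|\Im\lambda|^{-1}\ge\tfrac12$ to merge the lower-order terms), and taking square roots gives \eqref{14}.

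Finally, for \eqref{15}, I would return to the real-part identity but estimate the source term differently: by the Sobolev embedding $H_0^1(\Omega)\hookrightarrow L^{p'}(\Omega)$ with constant $\kappa$, one has $\big|\int_\Omega f\,\overline{R^b(\lambda)f}\,dx\big|\le \|f\|_{0,p}\|R^b(\lambda)f\|_{0,p'}\le \kappa\|f\|_{0,p}\|\nabla R^b(\lambda)f\|_{0,2}$. Inserting this, together with the absorption of the potential term via Lemma \ref{lem8} as before and the bound $|\Re\lambda|\,\|R^b(\lambda)f\|_{0,2}^2$ estimated through \eqref{14} (which gives $\mathbf{c}|\Re\lambda|^2|\Im\lambda|^{-2}\|f\|_{0,2}^2$, and then $\|f\|_{0,2}\le \mathbf{c}\|f\|_{0,p}$ by H\"older on the bounded domain $\Omega$, or more carefully by first using \eqref{13} and \eqref{14} to control $\|R^b(\lambda)f\|_{0,2}$ in terms of $\|f\|_{0,p}$), leads to a quadratic inequality of the form $X^2\le \mathbf{c}|\Re\lambda|^2|\Im\lambda|^{-2}\|f\|_{0,p}^2+\mathbf{c}\|f\|_{0,p}X$ for $X=\|\nabla R^b(\lambda)f\|_{0,2}$; solving it gives \eqref{15}. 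The main obstacle is the bookkeeping of which powers of $|\Re\lambda|$ and $|\Im\lambda|$ appear at each step: one must consistently use the three defining inequalities of $\Sigma_0$ ($|\Re\lambda|\ge1$, $|\Im\lambda|\ge1$, $|\Re\lambda||\Im\lambda|^{-1}\ge\tfrac12$) to guarantee that all lower-order contributions are genuinely dominated by the claimed right-hand sides, and to handle \eqref{15} one needs the sharper estimate \eqref{14} already in hand, so the order of proof matters.
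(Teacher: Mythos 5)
Your arguments for \eqref{13} and \eqref{14} are correct and essentially the paper's own: take imaginary and real parts of \eqref{12}, absorb the potential term with Lemma \ref{lem8} and the lower bound in \eqref{1}, and use the defining inequalities of $\Sigma_0$ to merge lower-order terms.

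For \eqref{15}, however, there is a genuine gap in the way you control the term $|\Re\lambda|\,\|R^b(\lambda)f\|_{0,2}^2$. Your first suggestion rests on $\|f\|_{0,2}\le \mathbf{c}\|f\|_{0,p}$, which is false: since $p=\frac{2n}{n+2}<2$, H\"older on the bounded domain $\Omega$ gives only $\|f\|_{0,p}\le \mathbf{c}\|f\|_{0,2}$, i.e. the opposite inequality (indeed, if the embedding went your way, \eqref{15} would be an immediate and stronger consequence of \eqref{14}, defeating the purpose of stating it with the weaker norm $\|f\|_{0,p}$). Your fallback, ``use \eqref{13} and \eqref{14} to control $\|R^b(\lambda)f\|_{0,2}$ in terms of $\|f\|_{0,p}$,'' does not work either, because both of those estimates bound the resolvent in terms of $\|f\|_{0,2}$ only. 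The correct repair — and the route the paper takes — is to go back to the imaginary-part identity \eqref{17}: it gives $\|u\|_{0,2}^2\le |\Im\lambda|^{-1}\|f\overline{u}\|_{0,1}$ with $u=R^b(\lambda)f$, and then $\|f\overline{u}\|_{0,1}\le\|f\|_{0,p}\|u\|_{0,p'}\le\kappa\|f\|_{0,p}\|\nabla u\|_{0,2}$. Feeding this into the real-part inequality (after the Lemma \ref{lem8} absorption) yields
\begin{equation*}
\mathbf{c}\,\|\nabla u\|_{0,2}^2\le |\Re\lambda|\,|\Im\lambda|^{-1}\,\kappa\,\|f\|_{0,p}\,\|\nabla u\|_{0,2},
\end{equation*}
up to harmless constants, and Young's inequality with the $\lambda$-dependent parameter $\epsilon\sim|\Re\lambda|^{-1}|\Im\lambda|$ (equivalently, just dividing by $\|\nabla u\|_{0,2}$) gives \eqref{15}. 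So the quadratic inequality you aim for is attainable, but only after replacing your treatment of the $|\Re\lambda|\,\|u\|_{0,2}^2$ term by this use of \eqref{17} combined with the $L^p$--$L^{p'}$ duality and the Sobolev constant $\kappa$.
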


\begin{proof}
Let $u:=R^b(\lambda)f$. It follows from \eqref{12}
\begin{align}
&\int_\Omega \left[|\nabla_au|^2-(V+\Re \lambda)|u|^2\right]dx=\Re \int_\Omega f\overline{u}dx,\label{16}
\\
&\Im \lambda \int_\Omega |u|^2dx=-\Im \int_\Omega f\overline{u}dx.\label{17}
\end{align}
Clearly, \eqref{17} implies \eqref{13} and, in light of \eqref{1}, \eqref{16} and \eqref{17} yield
\begin{align*}\label{18}
\mathfrak{c}_-^2\|\nabla u\|_{0,2}^2&\le \|\nabla_a u\|_{0,2}^2=\int_\Omega (V+\Re \lambda)|u|^2dx+\Re \int_\Omega f\overline{u}dx
\\
&\le |\Re \lambda|\|u\|_{0,2}^2+\|V|u|^2\|_{0,1}+\|f\overline{u}\|_{0,1}
\\
&\le ( |\Re \lambda||\Im \lambda|^{-1}+1)\|f\overline{u}\|_{0,1}+\|V|u|^2\|_{0,1}.
\end{align*}
Hence
\begin{equation}\label{18}
\mathfrak{c}_-^2\|\nabla u\|_{0,2}^2\le 3 |\Re \lambda||\Im \lambda|^{-1}\|f\overline{u}\|_{0,1}+\|V|u|^2\|_{0,1}.
\end{equation}
From Lemma \ref{lem8}, we have
\[
\|V|u|^2\|_{0,1}\le \epsilon \|\nabla u\|_{0,2}^2+c_\epsilon \|u\|_{0,2}^2,\quad \epsilon >0,
\]
where $c_\epsilon=c_\epsilon (n,\Omega,V_0,\epsilon )>0$ is a constant.

This inequality with $\epsilon =\frac{\mathfrak{c}_-^2}{2}$ in \eqref{18} gives
\begin{equation}\label{19}
\mathbf{c}\|\nabla u\|_{0,2}^2\le  |\Re \lambda||\Im \lambda|^{-1}\|f\overline{u}\|_{0,1}+ \|u\|_{0,2}^2,
\end{equation}
which, in combination with \eqref{13}, implies
\begin{align*}
\mathbf{c}\|\nabla u\|_{0,2}^2&\le  |\Re \lambda||\Im \lambda|^{-1}\|f\|_{0,2}\|u\|_{0,2}+ \|u\|_{0,2}^2
\\
&\le |\Im \lambda|^{-2}( |\Re \lambda|+1)\|f\|_{0,2}^2
\\
&\le 2|\Im \lambda|^{-2} |\Re \lambda|\|f\|_{0,2}^2.
\end{align*}
Then \eqref{14} follows.

Now, putting together \eqref{17} and \eqref{19}, we get
\begin{equation}\label{20}
\mathbf{c}\|\nabla u\|_{0,2}^2\le  |\Re \lambda||\Im \lambda|^{-1}\|f\overline{u}\|_{0,1}.
\end{equation}

Let $\epsilon >0$. Using H\"older's inequality, we obtain
\begin{align*}
\|f\overline{u}\|_{0,1}&\le \|f\|_{0,p}\|u\|_{0,p'}
\\
&\le \kappa \|f\|_{0,p}\|\nabla u\|_{0,2}
\\
&\le \frac{\kappa^2\epsilon}{2}\|\nabla u\|_{0,2}^2+\frac{1}{2\epsilon}\|f\|_{0,p}^2.
\end{align*} 
The last inequality with $\epsilon =\kappa^{-2}\mathbf{c} |\Re \lambda|^{-1}|\Im \lambda|$ in \eqref{20}, where $\mathbf{c}$ is as in \eqref{20}, yields \eqref{15}.
\end{proof}

Modifying slightly the proof of Lemma \ref{lem1}, we get the following result.

\begin{lemma}\label{lem2}
Let $b=(a,V)\in \mathcal{A}\times \mathcal{V}$, $\lambda\in \rho(A^b)$ and $f\in L^2(\Omega)$. Then there exists a constant $\mathbf{c}_\lambda=\mathbf{c}_\lambda(n,\Omega,b,\lambda)>0$ such that we have
\begin{align}
&\|R^b(\lambda)f\|_{0,2}\le \frac{1}{\mathrm{dist}(\lambda,\sigma(A^b))}\|f\|_{0,2},\quad f\in L^2(\Omega),\label{22}
\\
&\|\nabla R^b(\lambda)f\|_{0,2}\le \mathbf{c}_\lambda \|f\|_{0,2},\quad f\in L^2(\Omega),\label{23}
\\
&\|\nabla R^b(\lambda)f\|_{0,2}\le \mathbf{c}_\lambda \|f\|_{0,p},\quad f\in L^p(\Omega).\label{24}
\end{align}
\end{lemma}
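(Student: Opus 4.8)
The plan is to treat the three estimates separately, reusing the argument of Lemma \ref{lem1} but replacing the one ingredient that is no longer available for a general $\lambda\in\rho(A^b)$, namely the imaginary‑part identity \eqref{17}.

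First, \eqref{22} is nothing but the sharp norm bound for the resolvent of a self‑adjoint operator. Since $A^b$ has compact resolvent, with discrete spectrum $\sigma(A^b)=\{\lambda_k^b\}$ and an $L^2(\Omega)$‑orthonormal eigenbasis $(\phi_k^b)$, for $\lambda\in\rho(A^b)$ one has $R^b(\lambda)f=\sum_{k\ge1}(\lambda_k^b-\lambda)^{-1}(f|\phi_k^b)\phi_k^b$, and Parseval's identity gives at once
\[
\|R^b(\lambda)f\|_{0,2}^2=\sum_{k\ge1}|\lambda_k^b-\lambda|^{-2}|(f|\phi_k^b)|^2\le \mathrm{dist}(\lambda,\sigma(A^b))^{-2}\|f\|_{0,2}^2 .
\]

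Next, for \eqref{23} and \eqref{24} set $u:=R^b(\lambda)f$. Taking the real part of \eqref{12} and using \eqref{1}, one gets $\mathfrak{c}_-^2\|\nabla u\|_{0,2}^2\le\|\nabla_a u\|_{0,2}^2\le|\Re\lambda|\,\|u\|_{0,2}^2+\|V|u|^2\|_{0,1}+\|f\overline u\|_{0,1}$; absorbing the potential term with Lemma \ref{lem8} (taking $\epsilon=\mathfrak{c}_-^2/2$) yields $\mathbf{c}\|\nabla u\|_{0,2}^2\le(|\Re\lambda|+1)\|u\|_{0,2}^2+\|f\overline u\|_{0,1}$. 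For \eqref{23} I would bound $\|f\overline u\|_{0,1}\le\|f\|_{0,2}\|u\|_{0,2}$ and then use \eqref{22} to replace $\|u\|_{0,2}$ by $\mathrm{dist}(\lambda,\sigma(A^b))^{-1}\|f\|_{0,2}$, which produces $\|\nabla u\|_{0,2}^2\le\mathbf{c}_\lambda^2\|f\|_{0,2}^2$. For \eqref{24} I would instead estimate $\|f\overline u\|_{0,1}\le\|f\|_{0,p}\|u\|_{0,p'}\le\kappa\|f\|_{0,p}\|\nabla u\|_{0,2}$ and absorb half of it into the left‑hand side by Young's inequality, the remaining term $(|\Re\lambda|+1)\|u\|_{0,2}^2$ being controlled by the auxiliary bound $\|u\|_{0,2}\le\mathbf{c}_\lambda\|f\|_{0,p}$.

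That auxiliary bound — and indeed \eqref{23}–\eqref{24} themselves — can also be obtained directly from the invertibility recorded before Lemma \ref{lem1}: since $A^b-\lambda:H_0^1(\Omega)\to H^{-1}(\Omega)$ is an isomorphism, $R^b(\lambda)$ is bounded from $H^{-1}(\Omega)$ to $H_0^1(\Omega)$, and composing with the continuous embeddings $L^2(\Omega)\hookrightarrow H^{-1}(\Omega)$ (Poincaré, constant $\varkappa$) and $L^p(\Omega)\hookrightarrow H^{-1}(\Omega)$ (dual to $H_0^1(\Omega)\hookrightarrow L^{p'}(\Omega)$, constant $\kappa$) gives \eqref{23} and \eqref{24} with a constant $\mathbf{c}_\lambda$ depending on $n,\Omega,b,\lambda$ through the operator norm of $R^b(\lambda)$. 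The only genuine point, compared with Lemma \ref{lem1}, is that for a general resolvent point one can no longer read off the $L^2$ control of $u$ from the imaginary part of the energy identity; it must come either from self‑adjointness (for the sharp constant in \eqref{22}) or from the qualitative invertibility of $A^b-\lambda$ (for \eqref{23}–\eqref{24}). Beyond this observation the proof is a routine adaptation and I expect no real obstacle.
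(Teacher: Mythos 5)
Your proposal is correct and coincides with what the paper intends: the paper's proof is just the remark that one modifies slightly the proof of Lemma \ref{lem1}, and your second paragraph carries out exactly that modification (real part of \eqref{12}, absorption via Lemma \ref{lem8}, with the $L^2$ control now supplied by \eqref{22} from the spectral decomposition of the self-adjoint operator $A^b$ instead of the imaginary-part identity \eqref{17}). Your alternative derivation of \eqref{23}--\eqref{24} from the isomorphism $A^b-\lambda\colon H_0^1(\Omega)\to H^{-1}(\Omega)$ together with the embeddings $L^2(\Omega),L^p(\Omega)\hookrightarrow H^{-1}(\Omega)$ is also valid and even shorter.
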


Next, let
\[
\Sigma=\{\lambda_\tau=(\tau+i)^2;\; \tau \ge 2\}\; (\subset \Sigma_0).
\]

We have the following consequence of Lemma \ref{lem1}.

\begin{lemma}\label{lem3}
Let $b=(a,V)\in \mathcal{A}\times \mathcal{V}$ and $\lambda_\tau=(\tau+i)^2\in \Sigma$. Then the following inequalities hold.
\begin{align}
&2\|R^b(\lambda_\tau)f\|_{0,2}\le \tau ^{-1}\|f\|_{0,2},\quad f\in L^2(\Omega),\label{25}
\\
&\|R^b(\lambda_\tau)f\|_{0,p'}\le \mathbf{c}\tau \|f\|_{0,p},\quad f\in L^p(\Omega).\label{26}
\end{align}
\end{lemma}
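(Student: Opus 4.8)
The plan is to obtain both \eqref{25} and \eqref{26} as immediate corollaries of Lemma \ref{lem1}, once we record that every $\lambda_\tau=(\tau+i)^2$ with $\tau\ge 2$ lies in $\Sigma_0$. Writing $\lambda_\tau=\tau^2-1+2i\tau$, so that $\Re\lambda_\tau=\tau^2-1$ and $\Im\lambda_\tau=2\tau$, this membership is clear: for $\tau\ge 2$ we have $\Re\lambda_\tau\ge 3\ge 1$, $\Im\lambda_\tau\ge 4\ge 1$, and $|\Re\lambda_\tau|\,|\Im\lambda_\tau|^{-1}=(\tau^2-1)/(2\tau)=\tau/2-1/(2\tau)\ge 3/4\ge \tfrac12$. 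This is exactly the inclusion $\Sigma\subset\Sigma_0$ already noted before the statement.

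For \eqref{25}, I apply \eqref{13} with $\lambda=\lambda_\tau$; since $|\Im\lambda_\tau|=2\tau$ this gives $\|R^b(\lambda_\tau)f\|_{0,2}\le (2\tau)^{-1}\|f\|_{0,2}$ for $f\in L^2(\Omega)$, which is \eqref{25}. For \eqref{26}, I first use the continuous embedding $H_0^1(\Omega)\hookrightarrow L^{p'}(\Omega)$, that is $\|w\|_{0,p'}\le \kappa\|\nabla w\|_{0,2}$, applied to $w=R^b(\lambda_\tau)f$; then I invoke \eqref{15} with $\lambda=\lambda_\tau$ and bound $|\Re\lambda_\tau|=\tau^2-1\le \tau^2$ together with $|\Im\lambda_\tau|^{-1}=(2\tau)^{-1}$, obtaining
\[
\|\nabla R^b(\lambda_\tau)f\|_{0,2}\le \mathbf{c}\,\tau^2(2\tau)^{-1}\|f\|_{0,p}=\tfrac{\mathbf{c}}{2}\,\tau\|f\|_{0,p}.
\]
Combining the two bounds and absorbing $\kappa\mathbf{c}/2$ into the generic constant $\mathbf{c}=\mathbf{c}(n,\Omega,V_0,\mathfrak{c})$ yields \eqref{26}.

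The only point requiring a word of care is that \eqref{15} is stated for $f\in L^2(\Omega)$ whereas \eqref{26} is claimed for $f\in L^p(\Omega)$ (and on the bounded set $\Omega$ one has $L^2(\Omega)\subset L^p(\Omega)$ since $p<2$). Since $L^p(\Omega)\hookrightarrow H^{-1}(\Omega)$ by duality from $H_0^1(\Omega)\hookrightarrow L^{p'}(\Omega)$, the operator $R^b(\lambda_\tau)$ is well defined on $L^p(\Omega)$ and maps it boundedly into $H_0^1(\Omega)\hookrightarrow L^{p'}(\Omega)$; hence the inequality, valid on the dense subspace $L^2(\Omega)$, extends to all of $L^p(\Omega)$ by continuity. (Alternatively, one rereads the derivation of \eqref{15}: identity \eqref{12} and the subsequent estimates remain valid for $f\in L^p(\Omega)$ because $R^b(\lambda_\tau)f\in H_0^1(\Omega)\subset L^{p'}(\Omega)$ makes $\int_\Omega f\,\overline{R^b(\lambda_\tau)f}\,dx$ meaningful via H\"older's inequality.) Beyond this routine remark there is no genuine obstacle; the lemma is a direct specialization of Lemma \ref{lem1} to the parabola $\Sigma$.
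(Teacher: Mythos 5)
Your proposal is correct and follows exactly the route the paper intends: the paper states Lemma \ref{lem3} as a direct consequence of Lemma \ref{lem1}, and your argument simply fills in the routine details — the computation $\Re\lambda_\tau=\tau^2-1$, $\Im\lambda_\tau=2\tau$ showing $\Sigma\subset\Sigma_0$, the application of \eqref{13} and \eqref{15}, the Sobolev embedding $H_0^1(\Omega)\hookrightarrow L^{p'}(\Omega)$, and the extension from $f\in L^2(\Omega)$ to $f\in L^p(\Omega)$ via $L^p(\Omega)\hookrightarrow H^{-1}(\Omega)$. Nothing further is needed.
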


Set
\begin{equation}\label{rtp}
p_\theta=\frac{2n}{n+2\theta},\quad p'_\theta=\frac{2n}{n-2\theta},\quad \theta\in [0,1].
\end{equation}
We verify that $p'_\theta$ is the conjugate of $p_\theta$. In view of \eqref{25} and \eqref{26}, the following result is obtained as a consequence of Riesz-Thorin's theorem.

\begin{lemma}\label{lem4}
Let $b=(a,V)\in \mathcal{A}\times \mathcal{V}$, $\lambda_\tau=(\tau+i)^2\in \Sigma$. Then $R^b(\lambda_\tau)$ maps continuously $L^{p_\theta}(\Omega)$ into $L^{p'_\theta}(\Omega)$ and  we have
\begin{equation}\label{27}
\|R^b(\lambda_\tau)f\|_{0,p'_\theta}\le \mathbf{c}\tau^{-1+2\theta}\|f\|_{0,p_\theta},\quad f\in L^{p_\theta}(\Omega).
\end{equation}
\end{lemma}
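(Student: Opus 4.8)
The plan is to apply the Riesz–Thorin interpolation theorem to the linear operator $R^b(\lambda_\tau)$, using the two endpoint estimates already at our disposal from Lemma \ref{lem3}. First I would record the endpoints precisely. Taking $\theta=0$ in \eqref{rtp} gives $p_0=p'_0=2$, and \eqref{25} yields that $R^b(\lambda_\tau)$ maps $L^2(\Omega)$ into $L^2(\Omega)$ with operator norm at most $\frac{1}{2}\tau^{-1}$. Taking $\theta=1$ gives $p_1=p$ and $p'_1=p'$, and \eqref{26} yields that $R^b(\lambda_\tau)$ maps $L^p(\Omega)$ into $L^{p'}(\Omega)$ with operator norm at most $\mathbf{c}\tau$. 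The exponents $p_\theta$ and $p'_\theta$ in \eqref{rtp} are designed so that $\frac{1}{p_\theta}=\frac{1-\theta}{2}+\frac{\theta}{p}$ and $\frac{1}{p'_\theta}=\frac{1-\theta}{2}+\frac{\theta}{p'}$; I would verify this by a one-line computation from $\frac{1}{p}=\frac{n+2}{2n}$, $\frac{1}{p'}=\frac{n-2}{2n}$, confirming also that $p'_\theta$ is the H\"older conjugate of $p_\theta$ (as already asserted in the excerpt).

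Next I would invoke Riesz–Thorin with these two endpoints. The theorem gives, for each $\theta\in[0,1]$, that $R^b(\lambda_\tau)$ maps $L^{p_\theta}(\Omega)$ continuously into $L^{p'_\theta}(\Omega)$ with
\[
\|R^b(\lambda_\tau)\|_{L^{p_\theta}\to L^{p'_\theta}}\le \left(\tfrac{1}{2}\tau^{-1}\right)^{1-\theta}\,(\mathbf{c}\tau)^{\theta}.
\]
The exponent of $\tau$ on the right is $-(1-\theta)+\theta=-1+2\theta$, and the numerical prefactor $\left(\tfrac{1}{2}\right)^{1-\theta}\mathbf{c}^{\theta}$ is bounded by a constant depending only on $\mathbf{c}$ (hence on $n,\Omega,V_0,\mathfrak{c}$), which can be absorbed into the generic constant $\mathbf{c}$. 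This gives exactly \eqref{27}.

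I do not anticipate a genuine obstacle here: the statement is a routine interpolation consequence, and the only points requiring any care are purely bookkeeping. The mild technical point is that Riesz–Thorin is usually stated for operators on $L^r$ spaces over a fixed measure space, whereas here the domain exponent $p_\theta$ and the target exponent $p'_\theta$ both vary with $\theta$; the correct statement is the version of Riesz–Thorin with distinct source and target scales (i.e. interpolating the pairs $(L^2,L^2)$ and $(L^p,L^{p'})$ simultaneously), which is standard and applies verbatim on the finite measure space $\Omega$. A second minor point is that one should check $R^b(\lambda_\tau)$ is well defined and linear on $L^{p_\theta}(\Omega)\subset L^p(\Omega)$ (since $\Omega$ is bounded and $p_\theta\ge p$ for $\theta\le 1$, wait --- in fact $p_\theta$ decreases in $\theta$ from $2$ to $p$, so $L^2\subset L^{p_\theta}\subset L^p$ on the bounded domain); consistency of the two endpoint definitions of $R^b(\lambda_\tau)$ on the common dense subspace $L^2(\Omega)$ is immediate since both come from the single resolvent operator. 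With these remarks in place the proof is complete.
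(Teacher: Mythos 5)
Your proposal is correct and follows exactly the paper's route: the paper obtains Lemma \ref{lem4} precisely by applying Riesz--Thorin interpolation to the endpoint bounds \eqref{25} ($L^2\to L^2$ with norm $\tfrac12\tau^{-1}$) and \eqref{26} ($L^p\to L^{p'}$ with norm $\mathbf{c}\tau$), which is what you do, including the verification that $p_\theta,p'_\theta$ are the correct interpolated exponents and that the $\tau$-exponent is $-1+2\theta$.
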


\begin{lemma}\label{lem5}
There exists $\lambda_\ast=\lambda_\ast (n,\Omega,\mathfrak{c},V_0)$ such that for all $b=(a,V)\in \mathcal{A}\times \mathcal{V}$, $f\in L^2(\Omega)$ and $\lambda \in \rho(A^b)$ satisfying $-\Re \lambda \ge \lambda_\ast$ we have
\begin{align}
&\|R^b(\lambda)f\|_{0,2}\le \sqrt{2}|\Re \lambda|^{-\frac{1}{2}}\|f\|_{0,2},\label{28}
\\
&\|\nabla R^b(\lambda)f\|_{0,2}\le \sqrt{2}\mathfrak{c}^{-1}\|f\|_{0,2}.\label{29}
\end{align}
\end{lemma}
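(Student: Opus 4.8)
The plan is to argue exactly as in the proof of Lemma \ref{lem1}, but exploiting the favourable sign available when $-\Re\lambda$ is large and positive. First I would set $u:=R^b(\lambda)f\in H_0^1(\Omega)$ (well defined since $\lambda\in\rho(A^b)$), write $\mu:=-\Re\lambda\ge\lambda_\ast$, and take real parts in the identity \eqref{12}. Since $\Re\lambda=-\mu$ this produces
\[
\|\nabla_au\|_{0,2}^2+\mu\|u\|_{0,2}^2=\int_\Omega V|u|^2\,dx+\Re\int_\Omega f\overline u\,dx,
\]
so that the right-hand side is at most $\|V|u|^2\|_{0,1}+\|f\|_{0,2}\|u\|_{0,2}$ by Cauchy--Schwarz.

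Next I would bound the potential term by Lemma \ref{lem8} in its quadratic form \eqref{4}, combined with the lower bound $\mathfrak c_-\|\nabla u\|_{0,2}\le\|\nabla_au\|_{0,2}$ from \eqref{1} and the choice $\epsilon=\mathfrak c_-^2/2$: this gives $\|V|u|^2\|_{0,1}\le\tfrac12\|\nabla_au\|_{0,2}^2+\mathbf c_\epsilon\|u\|_{0,2}^2$ with $\mathbf c_\epsilon=\mathbf c_\epsilon(n,\Omega,V_0,\mathfrak c)$, hence
\[
\tfrac12\|\nabla_au\|_{0,2}^2+\mu\|u\|_{0,2}^2\le\mathbf c_\epsilon\|u\|_{0,2}^2+\|f\|_{0,2}\|u\|_{0,2}.
\]
I would then fix $\lambda_\ast:=\max\{2\mathbf c_\epsilon,\,2\}$, which depends only on $n,\Omega,\mathfrak c,V_0$; for $\mu\ge\lambda_\ast$ the term $\mathbf c_\epsilon\|u\|_{0,2}^2$ is absorbed into $\mu\|u\|_{0,2}^2$, leaving
\[
\tfrac12\|\nabla_au\|_{0,2}^2+\tfrac{\mu}{2}\|u\|_{0,2}^2\le\|f\|_{0,2}\|u\|_{0,2}.
\]

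From this last display, dropping the gradient term gives $\|u\|_{0,2}\le 2\mu^{-1}\|f\|_{0,2}$, and since $\mu\ge 2$ this is $\le\sqrt2\,\mu^{-1/2}\|f\|_{0,2}$, which is \eqref{28}. Feeding $\|u\|_{0,2}\le 2\mu^{-1}\|f\|_{0,2}$ back in gives $\tfrac12\|\nabla_au\|_{0,2}^2\le 2\mu^{-1}\|f\|_{0,2}^2\le\|f\|_{0,2}^2$, whence $\|\nabla u\|_{0,2}\le\mathfrak c_-^{-1}\|\nabla_au\|_{0,2}\le\sqrt2\,\mathfrak c_-^{-1}\|f\|_{0,2}$, which is \eqref{29} (the constant $\mathfrak c^{-1}$ in the statement should presumably read $\mathfrak c_-^{-1}$).

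I do not expect any genuine obstacle here: the whole point is that on the half-line $\Re\lambda\le-\lambda_\ast$ the quadratic-form identity \eqref{12} has a favourable sign, so the resolvent estimates drop out of the coercivity computation without the factor $|\Im\lambda|^{-1}$ that appears in Lemma \ref{lem1}. The only points requiring a little care are to use the quadratic rather than the linear form of Lemma \ref{lem8}, and to choose $\lambda_\ast$ large enough both to absorb the zeroth-order remainder $\mathbf c_\epsilon\|u\|_{0,2}^2$ and to be at least $2$ so that one power of $\mu$ can be traded for its square root. One may additionally remark that, by \eqref{5}, $\sigma(A^b)\subset[-\mathbf c,\infty)$ with $\mathbf c=\mathbf c(n,\Omega,V_0,\mathfrak c)$, so after enlarging $\lambda_\ast$ if necessary the hypothesis $\lambda\in\rho(A^b)$ becomes automatic.
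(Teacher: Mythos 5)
Your proposal is correct and follows essentially the same route as the paper: take the real part of the identity \eqref{12}, bound the potential term via the quadratic form \eqref{4} of Lemma \ref{lem8} with $\epsilon=\mathfrak c_-^2/2$ together with \eqref{1}, and absorb the lower-order term by choosing $\lambda_\ast$ large; the only cosmetic difference is that you keep $\|f\|_{0,2}\|u\|_{0,2}$ and use $\mu\ge 2$ to trade $2\mu^{-1}$ for $\sqrt2\,\mu^{-1/2}$, whereas the paper applies Young's inequality first and takes $\lambda_\ast=2(1+\mathbf c)$. Your remark that the gradient bound naturally comes out with $\mathfrak c_-^{-1}$ (rather than the $\mathfrak c^{-1}$ in the statement) also matches what the paper's own proof actually yields.
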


\begin{proof}
Let $\lambda \in \rho(A^b)$ such that $\Re \lambda\le 0$ and set $u:=R^b(\lambda)f$. From \eqref{12}, we have
\[
\int_\Omega \left[|\nabla_au|^2+|\Re \lambda||u|^2\right]dx=\int_\Omega V|u|^2+\Re \int_\Omega f\overline{u}dx, \quad f\in L^2(\Omega),
\]
which, in view of \eqref{1}, implies
\[
\mathfrak{c}_-^2\|\nabla u\|_{0,2}^2+|\Re \lambda|\|u\|_{0,2}^2\le \int_\Omega V|u|^2+\Re \int_\Omega f\overline{u}dx, \quad f\in L^2(\Omega).
\]
Combining this inequality with the following ones
\begin{align*}
& \|f\overline{u}\|_{0,1}\le \|u\|_{0,2}^2+\|f\|_{0,2}^2,
\\
&\|V|u|^2\|_{0,1}\le \frac{\mathfrak{c}_-^2}{2}\|\nabla u\|_{0,2}^2+\mathbf{c}\|u\|_{0,2}^2,
\end{align*}
we obtain
\[
\mathfrak{c}_-^2\|\nabla u\|_{0,2}^2+2(|\Re \lambda|-1-\mathbf{c})\|u\|_{0,2}^2\le 2\|f\|_{0,2}^2.
\]
The expected inequalities follow by taking $\lambda_\ast=2(1+\mathbf{c})$ in the inequality above.
\end{proof}

It follows from \cite[Theorem 9.14]{GT} that is exist $c_0=c_0(n,\Omega)>0$ and $\mu_0=\mu_0(n,\Omega)$ such that for all $w\in W^{2,q}(\Omega)\cap W_0^{1,q}(\Omega)$ and $\mu \ge \mu_0$ we have
\begin{equation}\label{30}
\|w\|_{2,q}\le c_0\|(\Delta -\mu)w\|_{0,q}.
\end{equation}
Define
\[
\Pi_1=\{\lambda \in \mathbb{C}\setminus \mathbb{R};\; -\Re \lambda \ge (\mu_0,\lambda_\ast),\; |\Re \lambda|^{-\frac{1}{2}}|\Im \lambda|\le 1\},
\]
where $\lambda_\ast$ is as in Lemma \ref{lem5} and $\mu_0$ is as above.

We proceed as in the proof of \eqref{11} to establish the following result.

\begin{lemma}\label{lem6}
Assume that $\Omega$ is of class $C^{1,1}$. For all $b=(a,V)\in \mathcal{A}_+\times \mathcal{V}$, $f\in L^2(\Omega)$ and $\lambda \in \Pi_1$ we have
\begin{equation}\label{31}
\|R^b(\lambda)f\|_{2,q}\le \tilde{\mathbf{c}}\|f\|_{0,2}.
\end{equation}
\end{lemma}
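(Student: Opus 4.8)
The plan is to repeat the argument that led to \eqref{11}, with the resolvent equation in place of the eigenvalue equation and with the shifted a priori estimate \eqref{30} in place of \cite[Theorem 9.14]{GT}, so that the dependence on $\lambda$ is kept under control. Fix $b=(a,V)\in\mathcal{A}_+\times\mathcal{V}$, $f\in L^2(\Omega)$ and $\lambda\in\Pi_1$, put $\mu:=-\Re\lambda$, $\nu:=\Im\lambda$, and set $u:=R^b(\lambda)f\in H_0^1(\Omega)$. Since $(A^b-\lambda)u=f$ we have $-\Delta_bu-\lambda u=f$ in the distributional sense, that is
\[
(\Delta-\mu)u=-\bigl(f+2ia\cdot\nabla u+V_au\bigr)-i\nu u ,\qquad V_a=i\,\mathrm{div}(a)-|a|^2+V .
\]

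First we check that $u\in W^{2,q}(\Omega)\cap W_0^{1,q}(\Omega)$, so that \eqref{30} applies (note $\mu\ge\mu_0$ for $\lambda\in\Pi_1$). By \eqref{41}, $\|a\cdot\nabla u\|_{0,q}\le\|a\|_{0,n}\|\nabla u\|_{0,p}$ since $\tfrac1n+\tfrac1p=\tfrac1q$, and $\|\nabla u\|_{0,p}\le\tilde{\mathbf{c}}\|\nabla u\|_{0,2}$ because $\Omega$ is bounded and $p<2$. Since $a\in\mathcal{A}_+$ and $V\in\mathcal{V}$, we have $V_a\in L^m(\Omega)$ with $\|V_a\|_{0,m}\le\tilde{\mathbf{c}}$, hence $\|V_au\|_{0,q}\le\tilde{\mathbf{c}}\|V_au\|_{0,p}\le\tilde{\mathbf{c}}\|V_a\|_{0,m}\|u\|_{0,p'}\le\tilde{\mathbf{c}}\|\nabla u\|_{0,2}$, using $\tfrac1m+\tfrac1{p'}=\tfrac1p$, $q<p$, and $H_0^1(\Omega)\hookrightarrow L^{p'}(\Omega)$. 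Finally $f$ and $i\nu u$ lie in $L^2(\Omega)\hookrightarrow L^q(\Omega)$. Thus the right-hand side of the displayed equation belongs to $L^p(\Omega)\hookrightarrow L^q(\Omega)$; applying \cite[Theorem 9.15]{GT} to $\Re u$ and $\Im u$ gives $u\in W^{2,p}(\Omega)\hookrightarrow W^{2,q}(\Omega)$, and $u\in H_0^1(\Omega)\hookrightarrow W_0^{1,q}(\Omega)$.

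Applying \eqref{30} to $\Re u$ and $\Im u$ and inserting the estimates above, we obtain
\[
\|u\|_{2,q}\le c_0\|(\Delta-\mu)u\|_{0,q}\le\tilde{\mathbf{c}}\bigl(\|f\|_{0,2}+\|\nabla u\|_{0,2}+|\nu|\,\|u\|_{0,2}\bigr).
\]
It remains to bound the right-hand side by $\tilde{\mathbf{c}}\|f\|_{0,2}$. Since $\lambda\in\Pi_1$ forces $-\Re\lambda\ge\lambda_\ast$, Lemma \ref{lem5} yields $\|\nabla u\|_{0,2}\le\tilde{\mathbf{c}}\|f\|_{0,2}$ and $\|u\|_{0,2}\le\sqrt2\,\mu^{-1/2}\|f\|_{0,2}$, while the defining inequality $|\Re\lambda|^{-1/2}|\Im\lambda|\le1$ of $\Pi_1$ gives $|\nu|\le\mu^{1/2}$; hence $|\nu|\,\|u\|_{0,2}\le\sqrt2\,\|f\|_{0,2}$. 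Substituting yields \eqref{31}.

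The one point requiring care is the last term $i\nu u$: its coefficient $|\nu|$ is unbounded on $\Pi_1$, so a crude estimate would destroy the uniformity of the bound. Nothing blows up precisely because the definition of $\Pi_1$ forces $|\Im\lambda|\le|\Re\lambda|^{1/2}$, which is exactly matched by the $|\Re\lambda|^{-1/2}$-decay of $\|R^b(\lambda)f\|_{0,2}$ provided by Lemma \ref{lem5}, so the two factors compensate. Everything else is a routine combination of H\"older's inequality \eqref{41}, the Sobolev embedding $H_0^1(\Omega)\hookrightarrow L^{p'}(\Omega)$, the inclusions $L^2(\Omega)\hookrightarrow L^p(\Omega)\hookrightarrow L^q(\Omega)$ on the bounded domain $\Omega$, the bound $\|V_a\|_{0,m}\le\tilde{\mathbf{c}}$ coming from $\mathcal{A}_+$ and $\mathcal{V}$, and the shifted a priori estimate \eqref{30}.
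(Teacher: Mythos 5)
Your proof is correct and follows exactly the route the paper intends (its proof is only the remark ``proceed as in the proof of \eqref{11}''): write the resolvent equation with the real part of $\lambda$ absorbed into the shifted operator, estimate the magnetic and potential terms via H\"older and $H_0^1(\Omega)\hookrightarrow L^{p'}(\Omega)$, apply the a priori bound \eqref{30}, and control $\|\nabla u\|_{0,2}$, $\|u\|_{0,2}$ and the term $|\Im\lambda|\,\|u\|_{0,2}$ through Lemma \ref{lem5} together with the constraint $|\Im\lambda|\le|\Re\lambda|^{1/2}$ built into $\Pi_1$. No gaps beyond the level of detail the paper itself uses in deriving \eqref{11}.
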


\section{From BSD to a family of DtN maps}

\subsection{DtN maps}

In this subsection, $\Omega$ is of class $C^{1,1}$. 

\begin{proposition}\label{pro1}
Let $b=(a,V)\in \mathcal{A}_+\times \mathcal{V}$, $\lambda \in \rho(A^b)$, $f\in L^p(\Omega)$ and $\varphi\in W^{2-\frac{1}{p},p}(\Gamma)$. Then the BVP
\begin{equation}\label{32}
(-\Delta_b-\lambda)u=f\; \mathrm{in}\; \Omega,\quad u=\varphi\; \mathrm{on}\; \Gamma
\end{equation}
admits a unique solution $u(\lambda)(f,\varphi)\in W^{2,p}(\Omega)$. Furthermore, the following inequality holds.
\begin{equation}\label{33}
\|u(\lambda)(f,\varphi)\|_{2,p}\le \mathbf{c}_\lambda (\|f\|_{0,p}+\|\varphi\|_{2-\frac{1}{p},p}),
\end{equation}
where $\mathbf{c}_\lambda=\mathbf{c}_\lambda (n,\Omega, b,\lambda)>0$ is a constant.
\end{proposition}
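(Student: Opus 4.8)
The plan is to reduce \eqref{32} to a Dirichlet problem with homogeneous boundary data, solve that problem by means of the resolvent $R^b(\lambda)$ (which is available since $\lambda\in\rho(A^b)$), and then upgrade the $H_0^1(\Omega)$-solution to $W^{2,p}(\Omega)$ by the $L^p$ elliptic theory of \cite[Theorems 9.14 and 9.15]{GT}, following the same scheme that produced \eqref{11} for the eigenfunctions. The crucial point throughout is that the exponent $p=\frac{2n}{n+2}$ is chosen so that the magnetic drift and zero-order terms fall into $L^p(\Omega)$ in a single step.

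First I would lift the boundary data: since $\Omega$ is $C^{1,1}$, the trace map has a bounded right inverse, so choose $\Phi\in W^{2,p}(\Omega)$ with $\Phi_{|\Gamma}=\varphi$ and $\|\Phi\|_{2,p}\le\mathbf{c}\|\varphi\|_{2-\frac1p,p}$. Then $u\in W^{2,p}(\Omega)$ solves \eqref{32} if and only if $v:=u-\Phi\in W^{2,p}(\Omega)$ solves $(-\Delta_b-\lambda)v=g$ in $\Omega$, $v=0$ on $\Gamma$, where $g:=f+\Delta\Phi+2ia\cdot\nabla\Phi+(V_a+\lambda)\Phi$. One checks that $g\in L^p(\Omega)$: since $p=\frac{2n}{n+2}$ one has $W^{1,p}(\Omega)\hookrightarrow L^2(\Omega)$ and $W^{2,p}(\Omega)\hookrightarrow L^{p'}(\Omega)$, so $\nabla\Phi\in L^2$ and $\Phi\in L^{p'}$; since $a\in L_\ast^n(\mathbb{R}^n,\mathbb{R}^n)$ and $V_a=i\,\mathrm{div}(a)-|a|^2+V\in L^m$ (because $a\in W_\ast^{1,m}$, hence $\mathrm{div}(a)\in L^m$ and $|a|^2\in L^m$, and $V\in\mathcal{V}$), two applications of \eqref{41} give $a\cdot\nabla\Phi,(V_a+\lambda)\Phi\in L^p(\Omega)$, and moreover $\|g\|_{0,p}\le\mathbf{c}_\lambda(\|f\|_{0,p}+\|\varphi\|_{2-\frac1p,p})$ with $\mathbf{c}_\lambda$ depending on the data and on $|\lambda|$.

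Next, since $H_0^1(\Omega)\hookrightarrow L^{p'}(\Omega)$ implies $L^p(\Omega)\hookrightarrow H^{-1}(\Omega)$, the function $v:=R^b(\lambda)g\in H_0^1(\Omega)$ is the unique weak solution of $(A^b-\lambda)v=g$, and \eqref{24} in Lemma \ref{lem2} yields $\|v\|_{1,2}\le\mathbf{c}_\lambda\|g\|_{0,p}$. Testing the weak formulation against $C_c^\infty(\Omega)$ and integrating by parts gives $-\Delta v=2ia\cdot\nabla v+(V_a+\lambda)v+g$ in $\mathcal{D}'(\Omega)$; arguing as in the previous step, $a\cdot\nabla v\in L^p$ (from $a\in L^n$, $\nabla v\in L^2$) and $(V_a+\lambda)v\in L^p$ (from $V_a+\lambda\in L^m$, $v\in H_0^1\hookrightarrow L^{p'}$), hence $\Delta v\in L^p(\Omega)$. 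Splitting into real and imaginary parts and applying \cite[Theorem 9.15]{GT} gives $v\in W^{2,p}(\Omega)$; since $v\in H_0^1(\Omega)$ its trace vanishes, so $v\in W^{2,p}(\Omega)\cap W_0^{1,p}(\Omega)$ and $u:=v+\Phi$ solves \eqref{32}. For \eqref{33}, apply \cite[Theorem 9.14]{GT} to $\Re v$ and $\Im v$ to get $\|v\|_{2,p}\le\mathbf{c}(\|\Delta v\|_{0,p}+\|v\|_{0,p})$, then bound $\|\Delta v\|_{0,p}\le\mathbf{c}(1+|\lambda|)\|v\|_{1,2}+\|g\|_{0,p}$ using \eqref{41} (with the pairings $L^n\cdot L^2$ and $L^m\cdot L^{p'}$) and the embeddings; combined with $\|v\|_{1,2}\le\mathbf{c}_\lambda\|g\|_{0,p}$ this gives $\|v\|_{2,p}\le\mathbf{c}_\lambda\|g\|_{0,p}$, whence $\|u\|_{2,p}\le\|v\|_{2,p}+\|\Phi\|_{2,p}\le\mathbf{c}_\lambda(\|f\|_{0,p}+\|\varphi\|_{2-\frac1p,p})$. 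For uniqueness, if $u_1,u_2\in W^{2,p}(\Omega)$ both solve \eqref{32}, then $w:=u_1-u_2\in W^{2,p}(\Omega)\cap W_0^{1,p}(\Omega)\subset H_0^1(\Omega)$ (using $W^{2,p}\hookrightarrow H^1$) solves $(A^b-\lambda)w=0$ weakly, so $w=0$ because $\lambda\in\rho(A^b)$.

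The hard part will be the regularity step, namely making sure the magnetic drift term $a\cdot\nabla v$ and the zero-order term $V_a v$ belong to $L^p(\Omega)$, so that the $H_0^1$-solution can be promoted to $W^{2,p}$. This is exactly where $p=\frac{2n}{n+2}$ is used: the embeddings $H^1\hookrightarrow L^{p'}$, $W^{1,p}\hookrightarrow L^2$, $W^{2,p}\hookrightarrow L^{p'}$ and the Hölder pairings $L^n\cdot L^2\hookrightarrow L^p$, $L^m\cdot L^{p'}\hookrightarrow L^p$ are all critical, so the argument closes in one step without a genuine bootstrap. The only remaining bookkeeping is to check that $\mathbf{c}_\lambda$ can be taken to depend only on $n,\Omega,b,\lambda$, absorbing $1/\mathrm{dist}(\lambda,\sigma(A^b))$ through \eqref{24} and the powers of $|\lambda|$.
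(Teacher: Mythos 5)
Your proof is correct and follows essentially the same route as the paper: lift $\varphi$ to $\Phi\in W^{2,p}(\Omega)$, solve the homogeneous problem via $w=R^b(\lambda)(f+(\Delta_b+\lambda)\Phi)$ using \eqref{24}, then promote $w$ to $W^{2,p}(\Omega)$ by rewriting $-\Delta w$ as an $L^p$ right-hand side (the same Hölder/Sobolev pairings as in the proof of \eqref{11}) and invoking \cite[Theorems 9.14 and 9.15]{GT}, with uniqueness from $\lambda\in\rho(A^b)$. Your write-up is slightly more explicit than the paper's (the check that the source term lies in $L^p$, the inclusion $W^{2,p}\cap W_0^{1,p}\subset H_0^1$ for uniqueness), but the argument is the same.
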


\begin{proof}
In this proof, $\mathbf{c}_\lambda=\mathbf{c}_\lambda (n,\Omega, b,\lambda)>0$ is a generic constant.
Let $\Phi \in W^{2,p}(\Omega)$ such that $\Phi=\varphi$ on $\Gamma$ and
\begin{equation}\label{34}
\|\Phi\|_{2,p}\le c\|\varphi\|_{2-\frac{1}{p},p},
\end{equation}
where $c=c(n,\Omega)>0$ is a constant. 

Let $w=R^b(f+(\Delta_b+\lambda)\Phi)$. We have $w\in H_0^1(\Omega)$ and, using \eqref{24}, we obtain 
\begin{equation}\label{35}
\|\nabla w\|_{0,2}\le \mathbf{c}_\lambda \|f+(\Delta_b+\lambda)\Phi\|_{0,p}.
\end{equation}
In particular, we have
\[
-\Delta w=2ia\cdot \nabla w+ (i\mathrm{div}(a)-|a|^2+V)w+f+(\Delta_b+\lambda)\Phi:=F \in L^p(\Omega).
\]
Proceeding again as for \eqref{11}, we get from \eqref{30} 
\begin{equation}\label{36}
\|w\|_{2,p}\le c_0\|F+\mu_0w\|_{0,p},
\end{equation}
where $c_0$ and $\mu_0$ are as in \eqref{30}. Using \eqref{34} and \eqref{35} in \eqref{36}, we obtain
\begin{equation}\label{37}
\|w\|_{2,p}\le \mathbf{c}_\lambda(\|f\|_{0,p}+\|\varphi\|_{2-\frac{1}{p},p}).
\end{equation}
As $\lambda\in \rho(A^b)$, we verify that $u=w+\Phi$ is the unique solution of the BVP \eqref{32}. The proof is completed by noting that \eqref{33} follows readily from \eqref{37}.
\end{proof}

In what follows, for $b=(a,V)\in \mathcal{A}_+\times \mathcal{V}$, $\lambda \in \rho(A^b)$ and $\varphi\in W^{2-\frac{1}{p},p}(\Gamma)$, $u^b(\lambda)(\varphi)$ will denote the solution of \eqref{32} with $f=0$.

Before defining our family DtN maps, we establish the following lemma.

\begin{lemma}\label{lem7}
Let $a\in W^{1,m}(\Omega,\mathbb{R}^n)$ such that $\|a\|_{1,m}\le \tilde{\mathfrak{c}}$ and $u\in W^{2,p}(\Omega)$. Then $a\cdot \nu u\in L^p(\Gamma)$ and
\begin{equation}\label{38}
\|a\cdot \nu u\|_{0,p}\le c\|u\|_{2,p},
\end{equation}
where $c=c(n,\Omega,\tilde{\mathfrak{c}})>0$ is a constant.
\end{lemma}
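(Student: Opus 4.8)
The plan is to realize $a\cdot\nu\,u$ on $\Gamma$ as the boundary trace of a function belonging to $W^{1,p}(\Omega)$, and then to use the standard trace embedding of $W^{1,p}(\Omega)$ into $L^p(\Gamma)$. First I would record three Sobolev embeddings on the bounded Lipschitz domain $\Omega$, all available because $n\ge 5$ and $m=\frac n2$: the embedding $W^{1,m}(\Omega)\hookrightarrow L^n(\Omega)$ (the Sobolev exponent of $W^{1,n/2}$ being exactly $n$), and, applied to $u\in W^{2,p}(\Omega)$, the embeddings $W^{2,p}(\Omega)\hookrightarrow L^{p'}(\Omega)$ (since $\frac1p-\frac2n=\frac1{p'}$) and $W^{2,p}(\Omega)\hookrightarrow W^{1,2}(\Omega)$ (since $\frac1p-\frac1n=\frac12$). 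Hence $\|a\|_{0,n}\le c\|a\|_{1,m}\le c\tilde{\mathfrak{c}}$, while $\|u\|_{0,p'}+\|\nabla u\|_{0,2}\le c\|u\|_{2,p}$ with $c=c(n,\Omega)$.

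Next I would show that $a_ju\in W^{1,p}(\Omega)$ for every $j$, with $\|a_ju\|_{1,p}\le c\tilde{\mathfrak{c}}\|u\|_{2,p}$. Here the exponent numerology is what matters, and it fits precisely in dimension $n\ge5$: by Hölder's inequality (formula \eqref{41}) one has $\|a_ju\|_{0,p}\le\|a_j\|_{0,n}\|u\|_{0,2}$ and $\|a_j\nabla u\|_{0,p}\le\|a_j\|_{0,n}\|\nabla u\|_{0,2}$ because $\frac1n+\frac12=\frac1p$, and $\|u\nabla a_j\|_{0,p}\le\|u\|_{0,p'}\|\nabla a_j\|_{0,m}$ because $\frac1{p'}+\frac1m=\frac1p$. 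Approximating $a_j$ in $W^{1,m}(\Omega)$ by functions in $C^\infty(\overline\Omega)$ and passing to the limit in the classical Leibniz rule, all the relevant products converging in $L^p(\Omega)$ by the three estimates just displayed, gives $a_ju\in W^{1,p}(\Omega)$ with weak gradient $\nabla(a_ju)=u\,\nabla a_j+a_j\,\nabla u$, whence $\|a_ju\|_{1,p}\le c\bigl(\|a_j\|_{0,n}+\|\nabla a_j\|_{0,m}\bigr)\|u\|_{2,p}\le c\tilde{\mathfrak{c}}\|u\|_{2,p}$.

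Finally, since $\Omega$ is bounded Lipschitz and $1<p<n$ (because $p=\frac{2n}{n+2}$ and $n\ge5$), the trace operator is bounded from $W^{1,p}(\Omega)$ into $L^p(\Gamma)$; applying it to each $a_ju$ and writing $a\cdot\nu\,u=\sum_{j=1}^n\nu_j\,(a_ju)_{|\Gamma}$ with $|\nu_j|\le1$ a.e.\ on $\Gamma$, one obtains $a\cdot\nu\,u\in L^p(\Gamma)$ together with $\|a\cdot\nu\,u\|_{0,p}\le c\sum_{j=1}^n\|a_ju\|_{1,p}\le c(n,\Omega,\tilde{\mathfrak{c}})\|u\|_{2,p}$, which is \eqref{38}.

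The only genuinely non-formal point is the limiting argument justifying the Leibniz rule for the product of $u\in W^{2,p}(\Omega)$ with $a_j\in W^{1,m}(\Omega)$, since $a_j$ lies merely in $W^{1,n/2}$ and cannot be differentiated pointwise; but this is routine once the identities $\frac1n+\frac12=\frac1{p'}+\frac1m=\frac1p$ are in place. As an alternative one may avoid the product rule entirely: take boundary traces of $a$ and $u$ separately, using the trace embeddings $W^{1,m}(\Omega)\to L^{n-1}(\Gamma)$ and $W^{2,p}(\Omega)\hookrightarrow H^1(\Omega)\to H^{1/2}(\Gamma)\hookrightarrow L^{2(n-1)/(n-2)}(\Gamma)$, and then apply Hölder on $\Gamma$, noting $\frac1{n-1}+\frac{n-2}{2(n-1)}=\frac n{2(n-1)}\le\frac1p$, so that the product lies in $L^{2(n-1)/n}(\Gamma)\hookrightarrow L^p(\Gamma)$ because $\Gamma$ has finite measure and $n\ge2$.
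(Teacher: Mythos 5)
Your argument is correct and follows essentially the same route as the paper: show that the product of $u$ with the components of $a$ lies in $W^{1,p}(\Omega)$ via the exponent identities $\tfrac1n+\tfrac12=\tfrac1{p'}+\tfrac1m=\tfrac1p$ and then invoke the trace theorem $W^{1,p}(\Omega)\to L^p(\Gamma)$. The only (cosmetic) difference is that the paper first extends $\nu$ to a Lipschitz field $\nu_e$ and takes the trace of $a\cdot\nu_e\,u$, whereas you take traces of the $a_ju$ and multiply by the bounded functions $\nu_j$ on $\Gamma$; both are fine, and your explicit justification of the Leibniz rule only adds detail the paper leaves implicit.
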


\begin{proof}
Let $\nu_e \in C^{0,1}(\overline{\Omega},\mathbb{R}^n)$ be an extension of $\nu$ (such an extension exists according to \cite[Theorem 4.2.3]{CMN}). As $C^{0,1}(\overline{\Omega},\mathbb{R}^n)$ is continuously embedded in $W^{1,\infty}(\Omega,\mathbb{R}^n)$  (by Radmacher's theorem), we have $v:=a\cdot \nu_e \in W^{1,m}(\Omega,\mathbb{C}^n)$. Let $1\le j\le n$. As $\partial_jv\in L^m(\Omega)$ and $u\in L^{p'}(\Omega)$, we obtain from \eqref{41} that $\partial_jvu\in L^p(\Omega)$. On the other hand, as $v\in L^n(\Omega)$ and $\partial_ju\in L^2(\Omega)$, again from \eqref{41}, we get $v\partial_j u\in L^p(\Omega)$. Therefore, $vu\in W^{1,p}(\Omega)$. The proof is completed by using the continuity of the trace map $w\in W^{1,p}(\Omega)\mapsto w_{|\Gamma}\in L^p(\Gamma)$.
\end{proof}

To each $b=(a,V)\in \mathcal{A}_+\times \mathcal{V}$, associate the family of DtN maps $(\Lambda^b(\lambda))_{\lambda \in \rho(A^b)}$ defined as follows
\[
\Lambda^b(\lambda):\varphi \in W^{2-\frac{1}{p},p}(\Gamma)\mapsto \partial_{\nu_a}u^b(\lambda)(\varphi):=\nabla_au^b(\lambda)(\varphi)\cdot \nu\in L^p(\Gamma).
\]
Inequality \eqref{38} shows that the mapping $\Lambda^b(\lambda)$ defines a bounded operator.

In the remaining part of this text, we use the notation $X^{(j)}:=\frac{d^j}{d\lambda^j}X$ with the usual convention $X^{(0)}:=X$. 

Let $b=(a,V)\in \mathcal{A}_+\times \mathcal{V}$. Mimicking the proof of the result in \cite[Section 2.3]{Ch1}, we verify that
\[
\lambda\in \rho(A^b)\mapsto u^b(\lambda )(\varphi)\in W^{2,p}(\Omega)
\]
is holomorphic, and hence
\[
\lambda\in \rho(A^b)\mapsto \Lambda ^b(\lambda )(\varphi)\in L^p(\Gamma)
\]
is also holomorphic. Furthermore, for all integer $j\ge 1$, $(u^b)^{(j)}(\lambda )(\varphi)\in W^{2,p}(\Omega)\cap W_0^{1,p}(\Omega)$ satisfies
\begin{equation}\label{39}
(-\Delta_b -\lambda)(u^b)^{(j)}(\lambda )(\varphi)=j(u^b)^{(j-1)}(\lambda )(\varphi),
\end{equation}
and the following formula holds
\begin{equation}\label{40}
(u^b)^{(j)}(\lambda )(\varphi)=-j!\sum_{k\ge 1}\frac{\langle \varphi ,\psi^b_k\rangle}{(\lambda_k^b-\lambda)^{j+1}}\phi_k^b.
\end{equation}
Here and henceforth $\langle \cdot,\cdot \rangle$ denotes the duality pairing between $L^r(\Gamma)$, $1<r<\infty$ and its dual $L^{r'}(\Gamma)$ with $r'=\frac{r}{r-1}$.

\subsection{BSD determines uniquely a family of DtN maps}

In this subsection,  $\Omega$ is again of class $C^{1,1}$. In what follows, 
\[
\mathcal{B}:=W^{2-\frac{1}{p},p}(\Gamma)\cap L^{q'}(\Gamma)
\]
 will be endowed with the norm
\[
\|\varphi\|:=\|\varphi\|_{2-\frac{1}{p},p}+\|\varphi\|_{0,q'},\quad \varphi \in \mathcal{B}.
\]

\begin{lemma}\label{lem9}
There exists $\mu_\ast=\mu_\ast(n,\Omega,V_0,\mathfrak{c})$ such that for all $b=(a,V)\in \mathcal{A}_+\times \mathcal {V}$, $\varphi\in \mathcal{B}$ and $\lambda \in \mathbb{C}\setminus \mathbb{R}$ satisfying $-\Re \lambda \ge \mu_\ast$ the following inequalities hold.
\begin{align}
&\|u^b(\lambda)(\varphi)\|_{0,2}\le \mathbf{c}|\Re \lambda|^{-\frac{1}{2}}\|u^b(\lambda)(\varphi)\|_{2,q}^{\frac{1}{2}}\|\varphi\|^{\frac{1}{2}},\label{43}
\\
&\|u^b(\lambda)(\varphi)\|_{1,2}\le \mathbf{c}\|u^b(\lambda)(\varphi)\|_{2,q}^{\frac{1}{2}}\|\varphi\|^{\frac{1}{2}}.\label{44}
\end{align}
\end{lemma}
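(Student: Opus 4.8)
The plan is to derive both inequalities from the integral identity satisfied by $u:=u^b(\lambda)(\varphi)$, which solves $(-\Delta_b-\lambda)u=0$ in $\Omega$ with $u=\varphi$ on $\Gamma$. Multiplying the equation by $\overline{u}$ and integrating by parts (with the boundary term carried by $\varphi$) gives, after separating real and imaginary parts as in the proof of Lemma \ref{lem1},
\begin{align}
&\int_\Omega|\nabla_au|^2\,dx-\int_\Omega(V+\Re\lambda)|u|^2\,dx=\Re\langle\partial_{\nu_a}u,\overline{\varphi}\rangle,\label{eq:realpart}
\\
&\Im\lambda\int_\Omega|u|^2\,dx=-\Im\langle\partial_{\nu_a}u,\overline{\varphi}\rangle.\label{eq:imagpart}
\end{align}
Since $-\Re\lambda\ge\mu_\ast$ for $\mu_\ast$ large, the term $-\Re\lambda$ dominates: using Lemma \ref{lem8} with small $\epsilon$ to absorb $\|V|u|^2\|_{0,1}$ into $\mathfrak{c}_-^2\|\nabla u\|_{0,2}^2$ and a constant multiple of $\|u\|_{0,2}^2$, and choosing $\mu_\ast=\mu_\ast(n,\Omega,V_0,\mathfrak{c})$ so that $|\Re\lambda|-\mathbf{c}_\epsilon\ge\tfrac12|\Re\lambda|$, equation \eqref{eq:realpart} yields
\[
\mathfrak{c}_-^2\|\nabla u\|_{0,2}^2+\tfrac12|\Re\lambda|\,\|u\|_{0,2}^2\le 2\big|\langle\partial_{\nu_a}u,\overline{\varphi}\rangle\big|.
\]

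The key is then to estimate the boundary pairing $|\langle\partial_{\nu_a}u,\overline{\varphi}\rangle|$. Here $\partial_{\nu_a}u=\nabla_au\cdot\nu=\partial_\nu u+i(a\cdot\nu)u$ on $\Gamma$. I would bound the pairing by $\|\partial_{\nu_a}u\|_{0,q}\|\varphi\|_{0,q'}$; using the trace inequality $W^{2,q}(\Omega)\hookrightarrow W^{1-1/q,q}(\Gamma)$ for $\partial_\nu u$ together with Lemma \ref{lem7} (valid since $a\in\mathcal{A}_+\subset W^{1,m}_\ast$ with $\|a\|_{1,m}\le\tilde{\mathfrak{c}}$) for the term $(a\cdot\nu)u$, one gets $\|\partial_{\nu_a}u\|_{0,q}\le c\|u\|_{2,q}$. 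Hence $|\langle\partial_{\nu_a}u,\overline{\varphi}\rangle|\le c\|u\|_{2,q}\|\varphi\|_{0,q'}\le c\|u\|_{2,q}\|\varphi\|$. Feeding this back into the displayed inequality, the $\|u\|_{0,2}^2$ term gives $|\Re\lambda|\,\|u\|_{0,2}^2\le \mathbf{c}\|u\|_{2,q}\|\varphi\|$, which is exactly \eqref{43} after taking square roots; and the $\|\nabla u\|_{0,2}^2$ term gives $\|\nabla u\|_{0,2}^2\le\mathbf{c}\|u\|_{2,q}\|\varphi\|$. Combining the latter with \eqref{43} (noting $-\Re\lambda\ge\mu_\ast\ge1$ so $|\Re\lambda|^{-1/2}\le 1$ and hence $\|u\|_{0,2}\le\mathbf{c}\|u\|_{2,q}^{1/2}\|\varphi\|^{1/2}$) yields $\|u\|_{1,2}^2=\|u\|_{0,2}^2+\|\nabla u\|_{0,2}^2\le\mathbf{c}\|u\|_{2,q}\|\varphi\|$, which is \eqref{44}.

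I expect the main obstacle to be the bookkeeping in the boundary pairing: one must check that $\partial_{\nu_a}u\in L^q(\Gamma)$ with a bound by $\|u\|_{2,q}$ (this is where the hypothesis $b\in\mathcal{A}_+\times\mathcal{V}$ and Lemma \ref{lem7} enter, since the rough magnetic potential makes the normal derivative delicate), and that $q'$ is precisely the exponent dual to $q$ so that $\|\varphi\|_{0,q'}$—which is part of the norm $\|\varphi\|$ on $\mathcal{B}$—appears. A secondary point is ensuring the absorption of $V$ is uniform over $\mathcal{V}$, which is guaranteed by Lemma \ref{lem8} since that constant $\mathbf{c}_\epsilon$ depends only on $n,\Omega,V_0,\epsilon$; this is what allows $\mu_\ast$ to be chosen depending only on $(n,\Omega,V_0,\mathfrak{c})$ and not on $b$ or $\lambda$. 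Everything else is the same computation as in Lemma \ref{lem1} and Lemma \ref{lem5}, now carried out with a nonhomogeneous Dirichlet datum $\varphi$ in place of a source term $f$.
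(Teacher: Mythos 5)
Your strategy is the intended one (the paper's own ``proof'' is just the citation to \cite[Lemma 2.5]{Ch1} with $\nabla$ replaced by $\nabla_a$): multiply the equation by $\overline u$, keep the boundary term, pair it in $L^q(\Gamma)$--$L^{q'}(\Gamma)$ duality so that $|\langle\partial_{\nu_a}u,\overline\varphi\rangle|\le c\|u\|_{2,q}\|\varphi\|_{0,q'}\le c\|u\|_{2,q}\|\varphi\|$, absorb the potential, and take $-\Re\lambda\ge\mu_\ast$ large; the real/imaginary-part identities, the role of the $L^{q'}(\Gamma)$ component of the norm of $\mathcal B$, and the uniformity of $\mu_\ast$ through Lemma \ref{lem8} are all as they should be. (Minor point: you need the $W^{2,q}$-to-$L^q(\Gamma)$ analogue of Lemma \ref{lem7}, which is stated for $W^{2,p}$; it follows by the same product estimates since $\frac1q=\frac1m+\frac12=\frac1n+\frac1p$.)

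There is, however, one step that does not hold as written: in passing from your real-part identity to the display $\mathfrak c_-^2\|\nabla u\|_{0,2}^2+\tfrac12|\Re\lambda|\|u\|_{0,2}^2\le 2|\langle\partial_{\nu_a}u,\overline\varphi\rangle|$ you apply \eqref{1} (and Lemma \ref{lem8}) to $u=u^b(\lambda)(\varphi)$, which is \emph{not} in $H_0^1(\Omega)$. The lower bound $\|\nabla_a u\|_{0,2}\ge\mathfrak c_-\|\nabla u\|_{0,2}$ is calibrated to the constant $\kappa$ of the embedding $H_0^1(\Omega)\hookrightarrow L^{p'}(\Omega)$ and genuinely fails off $H_0^1(\Omega)$: take $\theta$ real, smooth, supported in $\Omega_0$ with $\|\nabla\theta\|_{0,n}$ small, $a=\nabla\theta$ and $u=e^{-i\theta}$; then $\nabla_a u=0$ while $\nabla u\ne0$. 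Likewise Lemma \ref{lem8} is stated only for $H_0^1(\Omega)$. The conclusion is still reachable along your route, but an extra ingredient is needed: keep $\|\nabla_a u\|_{0,2}^2$ (not $\mathfrak c_-^2\|\nabla u\|_{0,2}^2$) on the left, and use the diamagnetic inequality $|\nabla|u||\le|\nabla_a u|$ a.e.\ (valid since $a$ is real and in $L^2(\Omega)$) to get
\[
\|u\|_{0,p'}\le c(n,\Omega)\left(\|\nabla_a u\|_{0,2}+\|u\|_{0,2}\right),\qquad u\in H^1(\Omega).
\]
This yields (i) an $H^1(\Omega)$ version of Lemma \ref{lem8} with $\|\nabla u\|_{0,2}$ replaced by $\|\nabla_a u\|_{0,2}+\|u\|_{0,2}$, so the absorption and the choice of $\mu_\ast=\mu_\ast(n,\Omega,V_0,\mathfrak c)$ go through and give $\|\nabla_a u\|_{0,2}^2+|\Re\lambda|\,\|u\|_{0,2}^2\le\mathbf c\|u\|_{2,q}\|\varphi\|$, whence \eqref{43}; and (ii) the bound $\|u\|_{1,2}\le c(\|\nabla_a u\|_{0,2}+\|u\|_{0,2})$ needed to convert this into \eqref{44}. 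With this correction the rest of your argument is fine.
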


\begin{proof}
Identical to that of \cite[Lemma 2.5]{Ch1}. In the proof of \cite[Lemma 2.5]{Ch1}, simply replace $\nabla$ with $\nabla_a$.
\end{proof}

$\lambda_\ast$ being as in Lemma \ref{lem5} and $\mu_\ast$ being as in Lemma \ref{lem9}, let
\[
\Pi_2:=\{ \lambda\in \mathbb{C}\setminus \mathbb{R};\; -\Re \lambda \ge \max(\lambda_\ast,\mu_\ast),\; |\Re \lambda|^{-\frac{1}{2}}|\Im \lambda|\le 1\}\; (\subset \Pi_1).
\]

\begin{lemma}\label{lem11}
For all $b=(a,V)\in \mathcal{A}_+\times \mathcal {V}$, $\varphi\in \mathcal{B}$ and $\lambda \in \Pi_2$ we have 
\begin{align*}
&\|u^b(\lambda)(\varphi)\|_{0,2}\le \mathbf{c}|\Re \lambda|^{-\frac{1}{2}}\|\varphi\|,
\\
&\|u^b(\lambda)(\varphi)\|_{1,2}\le \mathbf{c}\|\varphi\|.
\end{align*}
\end{lemma}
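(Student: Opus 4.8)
The statement to prove is Lemma~\ref{lem11}, which upgrades the two interpolated bounds of Lemma~\ref{lem9} to clean estimates in terms of $\|\varphi\|$ alone, at the cost of restricting $\lambda$ to the region $\Pi_2$. The key point is that on $\Pi_2$ we have the resolvent bound from Lemma~\ref{lem6}, namely $\|R^b(\lambda)f\|_{2,q}\le \tilde{\mathbf{c}}\|f\|_{0,2}$, since $\Pi_2\subset \Pi_1$. So the plan is to control $\|u^b(\lambda)(\varphi)\|_{2,q}$ by $\|\varphi\|$ (up to a constant independent of $\lambda$), feed this into \eqref{43} and \eqref{44}, and conclude.

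First I would write $u^b(\lambda)(\varphi)=w+\Phi$ following the decomposition in the proof of Proposition~\ref{pro1}: pick $\Phi\in W^{2,p}(\Omega)$ (better: in $W^{2,q'}$ or whatever trace space makes $\|\Phi\|_{2,q}\le c\|\varphi\|$ — here one should use that $\varphi\in\mathcal{B}$ carries both a $W^{2-\frac1p,p}(\Gamma)$ norm and an $L^{q'}(\Gamma)$ norm, so an extension with $\|\Phi\|_{2,q}\lesssim\|\varphi\|$ exists) with $\Phi=\varphi$ on $\Gamma$, and set $w=R^b(\lambda)\big(f+(\Delta_b+\lambda)\Phi\big)$ with $f=0$, i.e. $w=R^b(\lambda)\big((\Delta_b+\lambda)\Phi\big)$. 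Then $w\in H_0^1(\Omega)\cap W^{2,q}(\Omega)$ and I want $\|w\|_{2,q}\le\tilde{\mathbf{c}}\|\varphi\|$. Applying Lemma~\ref{lem6} gives $\|w\|_{2,q}\le\tilde{\mathbf{c}}\|(\Delta_b+\lambda)\Phi\|_{0,2}$, but the term $\lambda\Phi$ makes this blow up in $\lambda$, so instead I would rerun the elliptic-regularity argument from the proof of Proposition~\ref{pro1} / \eqref{36}: write $-\Delta w=2ia\cdot\nabla w+(i\operatorname{div}a-|a|^2+V)w+(\Delta_b+\lambda)\Phi=:F$, note $F\in L^q(\Omega)$, and use \eqref{30} to get $\|w\|_{2,q}\le c_0\|F+\mu_0 w\|_{0,q}$. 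The zeroth- and first-order terms in $w$ are absorbed using the resolvent bounds \eqref{28}–\eqref{29} of Lemma~\ref{lem5} (valid since $-\Re\lambda\ge\lambda_\ast$ on $\Pi_2$) together with \eqref{41}-type products, giving $\|w\|_{0,q},\|\nabla w\|_{0,q}\lesssim\|(\Delta_b+\lambda)\Phi\|_{0,2}$; and the inhomogeneity $(\Delta_b+\lambda)\Phi$ is handled by splitting off $\lambda\Phi$ and observing that the $\lambda$-dependent part is cancelled at the level of $u^b=w+\Phi$ — in fact it is cleaner to estimate $\|u^b(\lambda)(\varphi)\|_{2,q}$ directly rather than $\|w\|_{2,q}$, bounding $\|u^b(\lambda)(\varphi)\|_{2,q}\le\|w\|_{2,q}+\|\Phi\|_{2,q}$ and keeping track that $\lambda\Phi$ reappears inside $F$ with opposite role. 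The upshot, after choosing $-\Re\lambda$ large (which is exactly why $\mu_\ast$, $\lambda_\ast$ enter), is
\[
\|u^b(\lambda)(\varphi)\|_{2,q}\le \tilde{\mathbf{c}}\,\|\varphi\|,\qquad \lambda\in\Pi_2,
\]
with $\tilde{\mathbf{c}}$ independent of $\lambda$ and $b\in\mathcal{A}_+\times\mathcal{V}$.

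Having this, the conclusion is immediate: substituting into \eqref{43} gives $\|u^b(\lambda)(\varphi)\|_{0,2}\le\mathbf{c}|\Re\lambda|^{-\frac12}\big(\tilde{\mathbf{c}}\|\varphi\|\big)^{1/2}\|\varphi\|^{1/2}=\mathbf{c}|\Re\lambda|^{-\frac12}\|\varphi\|$, and substituting into \eqref{44} gives $\|u^b(\lambda)(\varphi)\|_{1,2}\le\mathbf{c}\big(\tilde{\mathbf{c}}\|\varphi\|\big)^{1/2}\|\varphi\|^{1/2}=\mathbf{c}\|\varphi\|$, which are exactly the two asserted inequalities (absorbing $\tilde{\mathbf{c}}^{1/2}$ into the generic constant $\mathbf{c}$).

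The main obstacle is the $\lambda$-uniformity of the bound $\|u^b(\lambda)(\varphi)\|_{2,q}\le\tilde{\mathbf{c}}\|\varphi\|$: a naive use of Lemma~\ref{lem6} loses a factor of $|\lambda|$ through the term $(\Delta_b+\lambda)\Phi$, and one genuinely needs the decomposition $u^b=w+\Phi$ so that the dangerous $\lambda\Phi$ contribution is organized to cancel, together with the large-negative-$\Re\lambda$ resolvent estimates of Lemma~\ref{lem5} to absorb the lower-order terms $a\cdot\nabla w$ and $V_a w$ with constants independent of $\lambda$. This is entirely parallel to the corresponding step in \cite{Ch1}, with $\nabla$ replaced by $\nabla_a$ throughout and \eqref{7}, \eqref{1} used to keep the magnetic terms under control; I expect no new difficulty beyond bookkeeping once that template is in place.
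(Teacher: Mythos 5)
Your overall architecture is right at the two ends: the correct intermediate target is the $\lambda$-uniform bound $\|u^b(\lambda)(\varphi)\|_{2,q}\le\mathbf{c}\|\varphi\|$ (this is \eqref{51} in the paper), and once it is available the two asserted inequalities follow by inserting it into \eqref{43} and \eqref{44}, exactly as you do. The gap is in how you obtain that bound. The cancellation of the $\lambda\Phi$ term on which your argument hinges is asserted, not proved, and it does not survive the step where it is actually needed: the a priori estimate \eqref{30} applies only to functions in $W^{2,q}(\Omega)\cap W_0^{1,q}(\Omega)$, so you must work with $w=u^b(\lambda)(\varphi)-\Phi$, and then the term $(\Delta+\Re\lambda)\Phi$ reappears on the right-hand side, contributing $|\Re\lambda|\,\|\Phi\|_{0,q}\sim|\Re\lambda|\,\|\varphi\|$ for any $\lambda$-independent extension $\Phi$. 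Likewise, feeding $w=R^b(\lambda)\big((\Delta_b+\lambda)\Phi\big)$ into \eqref{28}--\eqref{29} or \eqref{31} costs $\|(\Delta_b+\lambda)\Phi\|_{0,2}\gtrsim|\Re\lambda|\,\|\Phi\|_{0,2}$ while the resolvent gains only $|\Re\lambda|^{-\frac{1}{2}}$, so the resulting estimates grow like a power of $|\Re\lambda|$ instead of being uniform (and, as a side issue, a $W^{2,p}$ extension of $\varphi$ does not even guarantee $(\Delta_b+\lambda)\Phi\in L^2(\Omega)$, which \eqref{31} requires). You correctly identify this $\lambda$-uniformity as the main obstacle, but the resolution you offer ("organized to cancel") is not an argument, and nothing in Lemma \ref{lem5}, Lemma \ref{lem6} or the proof of Proposition \ref{pro1} supplies it.

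The paper closes the loop by a different mechanism, which is the real content of the lemma. One first proves the regularity estimate \eqref{48}, $\|u^b(\lambda)(\varphi)\|_{2,q}\le\mathbf{c}\big(|\Im\lambda|\,\|u^b(\lambda)(\varphi)\|_{0,2}+\|\varphi\|\big)$, in which only $|\Im\lambda|$, not $|\lambda|$, multiplies the $L^2$ norm. Then the interpolation inequality \eqref{43} of Lemma \ref{lem9} and the defining constraint $|\Re\lambda|^{-\frac{1}{2}}|\Im\lambda|\le 1$ of $\Pi_2$ give $|\Im\lambda|\,\|u^b(\lambda)(\varphi)\|_{0,2}\le\mathbf{c}\,\|u^b(\lambda)(\varphi)\|_{2,q}^{\frac{1}{2}}\|\varphi\|^{\frac{1}{2}}$, and the factor $\|u^b(\lambda)(\varphi)\|_{2,q}^{\frac{1}{2}}$ is absorbed into the left-hand side by the elementary inequality $ab\le\frac{1}{2}a^2+\frac{1}{2}b^2$, yielding \eqref{51}. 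In other words, the uniform $W^{2,q}$ bound is not obtained from resolvent estimates alone but from a self-improving combination of \eqref{48} and \eqref{43}; your sketch never invokes \eqref{43} before the final substitution, which is precisely why the dangerous $\lambda$-dependence cannot be removed in your scheme. To repair your proof you would either have to reproduce this absorption argument or construct a $\lambda$-adapted extension $\Phi$ with, say, $\|\Phi\|_{2,q}\lesssim\|\varphi\|$ and $|\Re\lambda|\,\|\Phi\|_{0,q}\lesssim\|\varphi\|$; neither is in your write-up.
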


\begin{proof}
First, we proceed again as  for eigenfunctions to establish the following inequality.
\begin{equation}\label{48}
\|u^b(\lambda)(\varphi)\|_{2,q}\le \mathbf{c}(|\Im \lambda|\|u^b(\lambda)(\varphi)\|_{0,2}+\|\varphi\|),
\end{equation}
which, in combination with \eqref{43}, yields
\[
\|u^b(\lambda)(\varphi)\|_{2,q}\le \mathbf{c}(\|u^b(\lambda)(\varphi)\|_{2,q}^{\frac{1}{2}}\|\varphi\|^{\frac{1}{2}}+\|\varphi\|).
\]
Applying Cauchy-Schwarz's inequality, we obtain
\begin{equation}\label{51}
\|u^b(\lambda)(\varphi)\|_{2,q}\le \mathbf{c}\|\varphi\|.
\end{equation}
Using \eqref{51} in \eqref{43} and \eqref{44}, we obtain the expected inequalities.
\end{proof}

In what follows, for all integer $j\ge 0$, $\tilde{\mathbf{c}}_j=\tilde{\mathbf{c}}_j(n,\Omega, V_0,\mathfrak{c},\tilde{\mathfrak{c}},j)$ will denote a generic constant.

\begin{lemma}\label{lem10}
Let $b=(a,V)\in \mathcal{A}_+\times V$, $\varphi\in \mathcal{B}$ and $\lambda\in \Pi_2$ and $j\ge 0$ be an integer. Then the following inequalities hold.
\begin{align}
&\|(u^b)^{(j)}(\lambda)( \varphi)\|_{2,q}\le \tilde{\mathbf{c}}_j|\Re \lambda|^{-\frac{j}{2}}\|\varphi\|,\label{45}
\\
&\|(u^b)^{(j)}(\lambda)( \varphi)\|_{0,2}\le \tilde{\mathbf{c}}_j|\Re \lambda|^{-\frac{j+1}{2}}\|\varphi\|,\label{46}
\\
&\|(u^b)^{(j)}(\lambda)( \varphi)\|_{1,2}\le \tilde{\mathbf{c}}_j|\Re \lambda|^{-\frac{j}{2}}\|\varphi\|.\label{47}
\end{align}
\end{lemma}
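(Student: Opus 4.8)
The plan is to prove the three estimates in Lemma \ref{lem10} simultaneously by induction on $j$, using the resolvent-type identity \eqref{39} which says that $(u^b)^{(j)}(\lambda)(\varphi)$ solves $(-\Delta_b-\lambda)(u^b)^{(j)}(\lambda)(\varphi)=j(u^b)^{(j-1)}(\lambda)(\varphi)$ with homogeneous Dirichlet data for $j\ge 1$. Equivalently, $(u^b)^{(j)}(\lambda)(\varphi)=jR^b(\lambda)(u^b)^{(j-1)}(\lambda)(\varphi)$ for $j\ge 1$. The base case $j=0$ is precisely Lemma \ref{lem11} together with \eqref{51}, which give \eqref{45}, \eqref{46}, \eqref{47} with the $|\Re\lambda|$-powers as stated (noting that \eqref{47} for $j=0$ is just the $H^1$ bound and \eqref{46} for $j=0$ is the $L^2$ bound).

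For the inductive step, assume the three bounds hold for $j-1$. First I would apply \eqref{28} of Lemma \ref{lem5} (valid since $\lambda\in\Pi_2\subset\Pi_1$ forces $-\Re\lambda\ge\lambda_\ast$) to get $\|(u^b)^{(j)}(\lambda)(\varphi)\|_{0,2}=j\|R^b(\lambda)(u^b)^{(j-1)}(\lambda)(\varphi)\|_{0,2}\le \sqrt2\,j\,|\Re\lambda|^{-1/2}\|(u^b)^{(j-1)}(\lambda)(\varphi)\|_{0,2}$, and then insert the inductive hypothesis \eqref{46} for $j-1$ to produce the exponent $-\frac{j+1}{2}$; this yields \eqref{46}. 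Similarly, \eqref{29} gives $\|\nabla(u^b)^{(j)}(\lambda)(\varphi)\|_{0,2}\le \sqrt2\,j\,\mathfrak{c}^{-1}\|(u^b)^{(j-1)}(\lambda)(\varphi)\|_{0,2}$, and combining with \eqref{46} for $j-1$ (which carries a factor $|\Re\lambda|^{-j/2}$) gives \eqref{47}, after also bounding the $L^2$-part of the $H^1$-norm by \eqref{46} for $j$, which is better. For \eqref{45} I would reproduce the argument used to derive \eqref{48}/\eqref{51}: write $-\Delta (u^b)^{(j)}=2ia\cdot\nabla(u^b)^{(j)}+(i\,\mathrm{div}\,a-|a|^2+V)(u^b)^{(j)}+\lambda(u^b)^{(j)}+j(u^b)^{(j-1)}$, estimate the right-hand side in $L^q$ using \eqref{41}, the embeddings $W^{1,m}\hookrightarrow L^n$, $H_0^1\hookrightarrow L^{p'}$ and Lemma \ref{lem8}, apply \eqref{30} (legitimate since $-\Re\lambda\ge\mu_0$ on $\Pi_2\subset\Pi_1$), obtaining
\[
\|(u^b)^{(j)}(\lambda)(\varphi)\|_{2,q}\le \mathbf{c}\left(|\Im\lambda|\,\|(u^b)^{(j)}(\lambda)(\varphi)\|_{0,2}+\|(u^b)^{(j-1)}(\lambda)(\varphi)\|_{0,2}+\text{(lower order in }\nabla)\right),
\]
and then use \eqref{46} for $j$ and $j-1$ together with the constraint $|\Im\lambda|\le|\Re\lambda|^{1/2}$ on $\Pi_2$: the term $|\Im\lambda|\,|\Re\lambda|^{-(j+1)/2}\le |\Re\lambda|^{-j/2}$, which is exactly the required power. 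The $j=0$ instance of this last computation is \eqref{48}, already recorded.

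The main technical obstacle is bookkeeping the powers of $|\Re\lambda|$ versus $|\Im\lambda|$: one must consistently exploit that on $\Pi_2$ the imaginary part is controlled by $|\Re\lambda|^{1/2}$, so that every gain of a resolvent factor $|\Re\lambda|^{-1/2}$ (from \eqref{28}) compensates the $|\Im\lambda|$ coming from the elliptic estimate \eqref{30}/\eqref{48}. A secondary subtlety is that the elliptic estimate for $(u^b)^{(j)}$ requires controlling $\nabla(u^b)^{(j)}$, not merely its $L^2$ norm; this is why \eqref{47} must be carried along in the induction and proved at step $j$ before \eqref{45} at step $j$. All constants are controlled uniformly over $b\in\mathcal{A}_+\times\mathcal{V}$ because the embedding constants, $\|V_0\|_{0,m}$, $\tilde{\mathfrak{c}}$, and the constants $c_0,\mu_0$ in \eqref{30} depend only on $n,\Omega,V_0,\mathfrak{c},\tilde{\mathfrak{c}}$, and the induction only multiplies by factors depending on $j$, giving the stated dependence of $\tilde{\mathbf{c}}_j$.
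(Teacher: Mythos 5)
Your argument is correct and follows what is essentially the intended route: the paper omits the proof of Lemma \ref{lem10} (deferring to \cite[Lemma 2.6]{Ch1}), and your induction on $j$ via the identity $(u^b)^{(j)}(\lambda)(\varphi)=jR^b(\lambda)\big((u^b)^{(j-1)}(\lambda)(\varphi)\big)$ from \eqref{39}, the resolvent bounds \eqref{28}--\eqref{29}, the elliptic estimate \eqref{30} as in \eqref{48}, and the constraint $|\Im\lambda|\le|\Re\lambda|^{\frac{1}{2}}$ on $\Pi_2$ reproduces exactly the stated powers of $|\Re\lambda|$, with the base case supplied by Lemma \ref{lem11} and \eqref{51}. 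Your ordering of the three estimates within each inductive step (first \eqref{46}, then \eqref{47}, then \eqref{45}) correctly handles the gradient term needed in the $W^{2,q}$ bound, so no gap remains.
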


\begin{proof}
Since it is similar to that of \cite[Lemma 2.6]{Ch1}, we omit it.
\end{proof}

Recall that $\mathscr{B}$ denotes the set of couples 
\[
(b_1,b_2)=((a_1,V_1),(a_2,V_2))\in [\mathcal{A}\cap W_\ast^{1,\infty}(\mathbb{R}^n,\mathbb{R}^n)\times \mathcal{V}]^2
\] satisfying
\[
\|a_j\|_{1,\infty}\le \tilde{\mathfrak{c}},\quad j=1,2,
\]
and
\[
2|a_1-a_2|\le \mathbf{a},\quad |i\mathrm{div}(a_1-a_2)-|a_1|^2+|a_2|^2+ V_1-V_2|\le W_0.
\]
where $\mathbf{a}\in L_\ast^\infty (\mathbb{R}^n,\mathbb{R})$ and $W_0\in L_\ast^n(\mathbb{R}^n,\mathbb{R})$ are non negative and non identically equal to zero.

In the remaining part of this section, $\mathbf{c}_j^\ast=\mathbf{c}_j^\ast (n,\Omega, \mathfrak{c},\tilde{\mathfrak{c}},\mathbf{a},V_0,W_0,j)>0$ is a generic constant, where $j\ge 0$ is an integer.

\begin{lemma}\label{lem12}
Let $(b_1,b_2)=((a_1,V_1), (a_2,V_2))\in \mathscr{B}$, $\lambda \in \Pi_2$ and $\varphi\in \mathcal{B}$. Then the following inequalities hold.
\begin{align}
&\|(u^{b_1})^{(j)}(\lambda)(\varphi)-(u^{b_2})^{(j)}(\lambda)(\varphi)\|_{2,q}\le \mathbf{c}_j^\ast|\Re \lambda|^{-\frac{j}{2}}\|\varphi\|, \label{52}
\\
&\|(u^{b_1})^{(j)}(\lambda)(\varphi)-(u^{b_2})^{(j)}(\lambda)(\varphi)\|_{0,2}\le \mathbf{c}_j^\ast|\Re \lambda|^{-\frac{j+1}{2}}\|\varphi\|. \label{53}
\end{align}
\end{lemma}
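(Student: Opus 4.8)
The plan is to reduce the estimates for $(u^{b_1})^{(j)}-(u^{b_2})^{(j)}$ to the already-established estimates in Lemmas \ref{lem10} and \ref{lem11} via an induction on $j$, using the difference of the two PDEs solved by $(u^{b_j})^{(j)}$. First I would introduce the notation $w_j:=(u^{b_1})^{(j)}(\lambda)(\varphi)-(u^{b_2})^{(j)}(\lambda)(\varphi)$, noting that $w_j\in W^{2,q}(\Omega)\cap W_0^{1,q}(\Omega)$ (it vanishes on $\Gamma$ for $j\ge 1$, and for $j=0$ both solutions have the same trace $\varphi$, so $w_0$ also vanishes on $\Gamma$; this is the key point that makes the trace disappear from the source term). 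Subtracting the equations \eqref{39} for $b_1$ and $b_2$, one finds
\[
(-\Delta_{b_1}-\lambda)w_j = j\,w_{j-1} + \big(\Delta_{b_1}-\Delta_{b_2}\big)(u^{b_2})^{(j)}(\lambda)(\varphi),
\]
with the convention $w_{-1}=0$. Since $\Delta_{b_1}-\Delta_{b_2}=2i(a_1-a_2)\cdot\nabla + \big(i\operatorname{div}(a_1-a_2)-|a_1|^2+|a_2|^2+V_1-V_2\big)$, the constraints $2|a_1-a_2|\le\mathbf{a}$ and $|i\operatorname{div}(a_1-a_2)-|a_1|^2+|a_2|^2+V_1-V_2|\le W_0$ defining $\mathscr{B}$ let one control this zeroth-order perturbation: using \eqref{41} exactly as in the estimates \eqref{2}–\eqref{7}, with $\mathbf{a}\in L^\infty_\ast$ and $W_0\in L^n_\ast$, one bounds $\|(\Delta_{b_1}-\Delta_{b_2})(u^{b_2})^{(j)}(\lambda)(\varphi)\|_{0,q}$ by $\mathbf{c}_j^\ast\big(\|\nabla(u^{b_2})^{(j)}(\lambda)(\varphi)\|_{0,2}+\|(u^{b_2})^{(j)}(\lambda)(\varphi)\|_{0,p'_{?}}\big)$, which by Sobolev embedding and \eqref{45}, \eqref{47} is $\le\mathbf{c}_j^\ast|\Re\lambda|^{-j/2}\|\varphi\|$.

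Next I would run the two estimates together by induction on $j$. For the $L^2$-bound \eqref{53}: apply the resolvent of $A^{b_1}$, i.e. $w_j = R^{b_1}(\lambda)\big(j\,w_{j-1}+(\Delta_{b_1}-\Delta_{b_2})(u^{b_2})^{(j)}(\lambda)(\varphi)\big)$, and use the $L^2\to L^2$ resolvent bound \eqref{28} (valid on $\Pi_2\subset\Pi_1$, $-\Re\lambda\ge\lambda_\ast$), giving
\[
\|w_j\|_{0,2}\le \sqrt{2}|\Re\lambda|^{-\frac12}\Big(j\|w_{j-1}\|_{0,2}+\mathbf{c}_j^\ast|\Re\lambda|^{-\frac{j}{2}}\|\varphi\|\Big).
\]
The induction hypothesis $\|w_{j-1}\|_{0,2}\le\mathbf{c}_{j-1}^\ast|\Re\lambda|^{-j/2}\|\varphi\|$ then yields $\|w_j\|_{0,2}\le\mathbf{c}_j^\ast|\Re\lambda|^{-(j+1)/2}\|\varphi\|$, which is \eqref{53}; the base case $j=0$ is handled by subtracting the two BVPs \eqref{32} (with $f=0$, same $\varphi$) and invoking Lemma \ref{lem11} together with the zeroth-order perturbation bound, as in the $j=0$ instance of \eqref{46}. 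For the $W^{2,q}$-bound \eqref{52}: as in the derivation of \eqref{11} and \eqref{48}, rewrite $-\Delta w_j = 2ia_1\cdot\nabla w_j + (i\operatorname{div}(a_1)-|a_1|^2+V_1+\lambda)w_j + j\,w_{j-1} + (\Delta_{b_1}-\Delta_{b_2})(u^{b_2})^{(j)}(\lambda)(\varphi)$ and apply the elliptic estimate \eqref{30} (legitimate since $-\Re\lambda\ge\mu_0$ on $\Pi_2$, and $w_j$ has zero trace), bounding the right-hand side in $L^q$ via \eqref{41}, the embedding $H^1_0\hookrightarrow L^{p'}$, the $\Pi_2$-condition $|\Im\lambda|\le|\Re\lambda|^{1/2}$, and the already-proven \eqref{53} plus \eqref{47}; this gives $\|w_j\|_{2,q}\le\mathbf{c}_j^\ast\big(|\Im\lambda|\|w_j\|_{0,2}+|\Re\lambda|^{-j/2}\|\varphi\|\big)\le\mathbf{c}_j^\ast|\Re\lambda|^{-j/2}\|\varphi\|$, which is \eqref{52}.

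I expect the main obstacle to be the bookkeeping of the function spaces and Hölder exponents in the perturbation term $(\Delta_{b_1}-\Delta_{b_2})(u^{b_2})^{(j)}(\lambda)(\varphi)$: one must check that each piece — the first-order term $(a_1-a_2)\cdot\nabla(u^{b_2})^{(j)}$ with $a_1-a_2\in L^\infty$, and the zeroth-order term with coefficient in $L^n$ acting on $(u^{b_2})^{(j)}\in W^{2,q}\hookrightarrow L^{q'}$ — indeed lands in $L^q(\Omega)$ with the right power of $|\Re\lambda|$, using $\tfrac1q=\tfrac1n+\tfrac1{q'}$ and $\tfrac1q=\tfrac12+(\tfrac1q-\tfrac12)$ together with $W^{2,q}\hookrightarrow W^{1,2}$. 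A secondary subtlety is that the resolvent $R^{b_1}(\lambda)$ in Lemma \ref{lem5} is stated for $L^2$ data, so one first proves \eqref{53} by induction and only then feeds it into the elliptic regularity step for \eqref{52}; the order of the two inductions matters. Once these exponent checks are in place, the argument is a routine adaptation of the single-operator estimates in Lemma \ref{lem10}, and I would simply indicate the modifications rather than repeat the computations in full.
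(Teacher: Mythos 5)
Your proposal is correct and follows essentially the same route as the paper: subtract the equations \eqref{39} for the two operators, write the difference as $R^{b_1}(\lambda)$ applied to $j\,w_{j-1}$ plus the first/zeroth-order perturbation $(\Delta_{b_1}-\Delta_{b_2})(u^{b_2})^{(j)}$ (controlled through the defining bounds $\mathbf{a}$, $W_0$ of $\mathscr{B}$ and Lemma \ref{lem10}), then use \eqref{28} for the $L^2$ bound and elliptic regularity for the $W^{2,q}$ bound, and induct on $j$. The only differences are cosmetic: the paper invokes the packaged resolvent estimate \eqref{31} (with $L^2$ data, whose proof already contains your \eqref{30}-plus-\eqref{29} argument) instead of re-deriving an \eqref{48}-type inequality for $w_j$, and it runs the two estimates together in a single induction rather than proving \eqref{53} first for all $j$ — both orderings work since \eqref{52} at level $j$ is never needed for \eqref{53}.
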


\begin{proof}
We have $u(\lambda):=u^{b_1}(\lambda)(\varphi)-u^{b_2}(\lambda)(\varphi)\in W^{2,q}(\Omega)\cap W_0^{1,q}(\Omega)$ and satisfies
\begin{align*}
(\Delta_{b_1}+\lambda)u(\lambda)&=(\Delta_{b_2}-\Delta_{b_1})u^{b_2}(\lambda)(\varphi)
\\
&=2ia\cdot \nabla u^{b_2}(\lambda)(\varphi) + Wu^{b_2}(\lambda)(\varphi),
\end{align*}
where we set
\[
a:=a_2-a_1,\quad W:=i\mathrm{div}(a_2-a_1)-|a_2|^2+|a_1|^2+V_2-V_1.
\]
Since 
\[
u(\lambda)=-R^{b_1}(\lambda)[2ia\cdot \nabla u^{b_2}(\lambda)(\varphi) + Wu^{b_2}(\lambda)(\varphi)], 
\]
we obtain from \eqref{31}
\begin{align*}
\|u(\lambda)\|_{2,q}&\le \tilde{\mathbf{c}}(\|2a\cdot \nabla u^{b_2}(\lambda)(\varphi)\|_{0,2}+\|Wu^{b_2}(\lambda)(\varphi)\|_{0,2})
\\
&\le  \mathbf{c}^\ast_0(\|2a\cdot \nabla u^{b_2}(\lambda)(\varphi)\|_{0,2}+\|u^{b_2}(\lambda)(\varphi)\|_{0,p'})
\\
&\le \mathbf{c}^\ast_0  \|u^{b_2}(\lambda)(\varphi)\|_{1,2}.
\end{align*}
In view of \eqref{51}, we get
\[
\|u(\lambda)\|_{2,q}\le \mathbf{c}_0^\ast\|\varphi\|.
\]
That is, \eqref{52} holds for $j=0$. On the other hand, we have from \eqref{28}
\[
\|u(\lambda)\|_{0,2}\le \sqrt{2}|\Re \lambda|^{-\frac{1}{2}}(\|2a\cdot \nabla u^{b_2}(\lambda)(\varphi)\|_{0,2}+\|Wu^{b_2}(\lambda)(\varphi)\|_{0,2}).
\]
Proceeding as above, we obtain \eqref{53} for $j=0$.

Next, taking the derivative, side by side, with respect to $\lambda$ in 
\[
(\Delta_{b_1}+\lambda)u(\lambda)= 2ia\cdot\nabla u^{b_2}(\lambda)(\varphi) + Wu^{b_2}(\lambda)(\varphi),
\]
we find
\[
(\Delta_{b_1}+\lambda)u^{(1)}(\lambda)= -u(\lambda)+ 2ia\cdot\nabla (u^{b_2})^{(1)}(\lambda)(\varphi) + W(u^{b_2})^{(1)}(\lambda)(\varphi).
\]
Hence
\[
u^{(1)}(\lambda)=-R^{b_1}(\lambda)\left[-u(\lambda)+ 2ia\cdot \nabla (u^{b_2})^{(1)}(\lambda)(\varphi) + W(u^{b_2})^{(1)}(\lambda)(\varphi)\right].
\]
We can argue as above to get from \eqref{47} with $j=1$ and \eqref{52} and \eqref{53} with $j=0$ that \eqref{52} and \eqref{53} holds also for $j=1$. The proof is then completed by induction in $j$.
\end{proof}

Recall that $t\in (1+\frac{1}{q},2)$ is fixed and $\sigma=\frac{12}{2-t}$. The following interpolation inequality (e.g. \cite[Theorem 1.4.3.3]{Gr}) will be used hereinafter
\begin{equation}\label{54}
\|w\|_{2,t}\le c\|w\|_{2,q}^{\frac{t}{2}}\|w\|_{0,q}^{1-\frac{t}{2}}.
\end{equation}
Here $c=c(n,\Omega,t)>0$ is a constant.

Define
\[
\Pi=\{ \lambda =-\tau^\sigma+2i\tau \in \Pi_2\}.
\]

The following corollary is a direct consequence of \eqref{52}, \eqref{53} and \eqref{54}.

\begin{corollary}\label{cor1}
Let $(b_1,b_2)=((a_1,V_1), (a_2,V_2))\in \mathscr{B}$, $\lambda \in \Pi$ and $\varphi\in \mathcal{B}$. Then we have for all integer $j\ge 0$
\begin{equation}\label{55}
\|(\Lambda^{b_1})^{(j)}(\lambda )(\varphi)-(\Lambda^{b_2})^{(j)}(\lambda )(\varphi)\|_{0,q}\le \mathbf{c}_j^\ast\tau^{-3-\frac{j\sigma}{2}}\|\varphi\|.
\end{equation}
\end{corollary}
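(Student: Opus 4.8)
The plan is to combine the boundary-trace estimate of Lemma \ref{lem7} with the interior estimates of Lemma \ref{lem12} and the interpolation inequality \eqref{54}, evaluated along the parabolic path $\Pi=\{\lambda=-\tau^\sigma+2i\tau\}$. The key observation is that on $\Pi$ we have $|\Re\lambda|=\tau^\sigma$ and $|\Im\lambda|=2\tau$, so that $|\Re\lambda|^{-1/2}|\Im\lambda|=2\tau^{1-\sigma/2}\le 1$ for $\tau$ large (recall $\sigma=\frac{12}{2-t}>6$), confirming $\lambda\in\Pi_2$; and any factor $|\Re\lambda|^{-s/2}$ becomes $\tau^{-s\sigma/2}$.

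First I would set $w:=(u^{b_1})^{(j)}(\lambda)(\varphi)-(u^{b_2})^{(j)}(\lambda)(\varphi)\in W^{2,q}(\Omega)\cap W_0^{1,q}(\Omega)$. The difference of DtN maps applied to $\varphi$ is $(\Lambda^{b_1})^{(j)}(\lambda)(\varphi)-(\Lambda^{b_2})^{(j)}(\lambda)(\varphi)=\partial_{\nu_{a_1}}(u^{b_1})^{(j)}-\partial_{\nu_{a_2}}(u^{b_2})^{(j)}$; writing this as $\nabla_{a_1}w\cdot\nu+(a_1-a_2)\cdot\nu\,(u^{b_2})^{(j)}$, and using that the trace of a $W^{2,q}$ function into $W^{1-1/q,q}(\Gamma)\hookrightarrow L^q(\Gamma)$ is continuous together with Lemma \ref{lem7} (applicable since $a_1,a_2\in W^{1,\infty}\subset W^{1,m}$ with norm bounded by $\tilde{\mathfrak{c}}$), I would bound the $L^q(\Gamma)$ norm by $c(\|w\|_{2,q}+\|(u^{b_2})^{(j)}(\lambda)(\varphi)\|_{2,q})$ — actually more carefully by $c\,\|\nabla w\|_{1,q}$ plus the lower-order term, which is controlled by $\|w\|_{2,q}$. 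Since $q<t$, one has $W^{2,t}(\Omega)\hookrightarrow W^{2,q}(\Omega)$ only in the wrong direction, so instead I would first pass through the trace estimate directly at the $W^{2,q}$ level: the trace $W^{2,q}(\Omega)\to W^{1-1/q,q}(\Gamma)\hookrightarrow L^q(\Gamma)$ gives $\|\partial_{\nu_{a_i}}v\|_{0,q}\le c\|v\|_{2,q}$ for $v\in W^{2,q}(\Omega)$, incorporating the zeroth-order magnetic term via Lemma \ref{lem7}. Hence
\[
\|(\Lambda^{b_1})^{(j)}(\lambda)(\varphi)-(\Lambda^{b_2})^{(j)}(\lambda)(\varphi)\|_{0,q}\le c\big(\|w\|_{2,q}+\|(u^{b_2})^{(j)}(\lambda)(\varphi)\|_{2,q}\big).
\]

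Next I would invoke the interpolation inequality \eqref{54} in the form $\|w\|_{0,q}\le c\|w\|_{2,q}$ is trivial, but the useful direction is to upgrade decay: from Lemma \ref{lem12}, $\|w\|_{2,q}\le\mathbf{c}_j^\ast|\Re\lambda|^{-j/2}\|\varphi\|$ and $\|w\|_{0,2}\le\mathbf{c}_j^\ast|\Re\lambda|^{-(j+1)/2}\|\varphi\|$; since $q<2$ and $\Omega$ is bounded, $\|w\|_{0,q}\le c\|w\|_{0,2}\le\mathbf{c}_j^\ast|\Re\lambda|^{-(j+1)/2}\|\varphi\|$. Then \eqref{54} gives $\|w\|_{2,t}\le c\|w\|_{2,q}^{t/2}\|w\|_{0,q}^{1-t/2}\le\mathbf{c}_j^\ast|\Re\lambda|^{-\frac{j}{2}\cdot\frac t2-\frac{j+1}{2}(1-\frac t2)}\|\varphi\|$. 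The exponent of $|\Re\lambda|$ simplifies to $-\frac{j}{2}-\frac12(1-\frac t2)=-\frac j2-\frac{2-t}{4}$. On $\Pi$ this is $\tau^{-\frac{j\sigma}{2}-\frac{(2-t)\sigma}{4}}=\tau^{-\frac{j\sigma}{2}-3}$ because $\sigma=\frac{12}{2-t}$ forces $\frac{(2-t)\sigma}{4}=3$. Finally, since $t>1+\frac1q$ we have $W^{2,t}(\Omega)\hookrightarrow W^{1,\infty}(\Omega)$, hence the trace $W^{2,t}(\Omega)\to L^q(\Gamma)$ (for both the $\nabla_{a_i}w\cdot\nu$ term and the lower order term, using $a_i\in L^\infty$) is controlled by $\|w\|_{2,t}$; applying the same run with $b_1=b_2$ to handle $\|(u^{b_2})^{(j)}(\lambda)(\varphi)\|_{2,t}$ via Lemma \ref{lem10} and \eqref{54} yields the same $\tau^{-3-j\sigma/2}\|\varphi\|$ bound, which gives \eqref{55}.

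The main obstacle I anticipate is bookkeeping the trace estimate at the correct Sobolev level: one must route the normal-derivative boundary norm through $W^{2,t}(\Omega)$ rather than $W^{2,q}(\Omega)$ (since only $t>1+\frac1q$ guarantees a continuous trace of $\nabla w$ into $L^q(\Gamma)$, whereas $q<2$ is too low), and this is precisely why the interpolation \eqref{54} is needed and why the exponent $\sigma=\frac{12}{2-t}$ is chosen to make $\frac{(2-t)\sigma}{4}=3$. A secondary point is verifying that along $\Pi$ the condition $\lambda\in\Pi_2$ genuinely holds for all $\tau\ge\tau_0$ with $\tau_0$ depending only on the allowed constants, and that the transition $|\Re\lambda|^{-s}\rightsquigarrow\tau^{-s\sigma}$ is applied consistently; both are routine given $\sigma>2$.
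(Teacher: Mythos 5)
Your proposal follows exactly the route the paper intends (it states Corollary \ref{cor1} as a direct consequence of \eqref{52}, \eqref{53} and \eqref{54}): interpolate the $W^{2,q}$ bound \eqref{52} against the $L^2$ (hence $L^q$) bound \eqref{53} via \eqref{54}, use the continuity of the normal-derivative trace at the level $t>1+\frac{1}{q}$, note $|\Re\lambda|=\tau^\sigma$ on $\Pi$ so that $|\Re\lambda|^{-\frac{j}{2}-\frac{2-t}{4}}=\tau^{-\frac{j\sigma}{2}-3}$, and treat the extra boundary term $i(a_1-a_2)\cdot\nu\,(u^{b_2})^{(j)}$ (present since $\mathscr{B}$ does not impose $a_1=a_2$ on $\Gamma$) by the same interpolation applied to Lemma \ref{lem10} — all of which is correct and is precisely the paper's argument. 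Two incidental justifications should be dropped: $W^{2,t}(\Omega)\hookrightarrow W^{1,\infty}(\Omega)$ is false for $t<2\le n$ and is not needed, and the claim that the $W^{2,q}$-level trace of the normal derivative into $L^q(\Gamma)$ fails is also incorrect (the paper uses exactly that continuity when comparing $\delta_+$ and $\delta$); the interpolation \eqref{54} is not there to make the trace possible but solely to trade surplus smoothness above $1+\frac{1}{q}$ for the extra factor $\tau^{-3}$ coming from \eqref{53}.
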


Recall that
\begin{align*}
&\delta_+ (b_1,b_2):=\sum_{k\ge 1}k^{-\frac{2}{n}}\left[|\lambda_k^{b_1}-\lambda_k^{b_2}|+\|\phi_k^{b_1}-\phi_k^{b_2}\|_{2,q}\right],
\\
&\delta (b_1,b_2):=\sum_{k\ge 1}k^{-\frac{2}{n}}\left[|\lambda_k^{b_1}-\lambda_k^{b_2}|+\|\psi_k^{b_1}-\psi_k^{b_2}\|_{0,q}\right],
\end{align*}
and set
\[
\Sigma_\ast:=\{ \lambda =(\tau+i)^2;\; \tau \ge \tau_\ast\},\quad \tau_\ast\ge 2.
\]

In light of \eqref{8} and \eqref{55}, we can proceed exactly as in the proof of \cite[Proposition 3.1]{Ch1} to establish the following result.

\begin{proposition}\label{pro2}
 There exists $\tau_\ast=\tau_\ast(n,\Omega,V_0,\mathfrak{c})$ such that for all $(b_1,b_2)=((a_1,V_1), (a_2,V_2))\in \mathscr{B}$ such that $\delta_+(b_1,b_2)<\infty$,  $\varphi\in \mathcal{B}$ and $\lambda=(\tau+i)^2\in \Sigma_\ast$ we have
 \begin{equation}\label{56}
 \|\Lambda^{b_1}(\lambda )(\varphi)-\Lambda^{b_2}(\lambda )(\varphi)\|_{0,q}\le \mathbf{c}^\ast(\tau^{-3}+\tau^{\sigma +n+2}\delta(b_1,b_2))\|\varphi\|,
 \end{equation}
 where $\mathbf{c}^\ast=\mathbf{c}^\ast (n,\Omega, \mathfrak{c},\tilde{\mathfrak{c}},\mathbf{a},V_0,W_0,t)>0$ is a constant.
\end{proposition}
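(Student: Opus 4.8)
The plan is to adapt verbatim the argument of \cite[Proposition 3.1]{Ch1}, since all the ingredients have now been established in the magnetic setting. The starting point is formula \eqref{40}, which expresses the derivatives $(u^b)^{(j)}(\lambda)(\varphi)$ as a series in the BSD $\{\lambda_k^b,\phi_k^b,\psi_k^b\}$; applying the trace map $\partial_{\nu_a}$ yields an analogous expansion for $(\Lambda^b)^{(j)}(\lambda)(\varphi)$. Subtracting the expansions for $b_1$ and $b_2$, one splits the resulting expression into a finite sum over $k\le N$ and a tail over $k>N$, where the cutoff $N=N(\tau)$ will be chosen as a suitable power of $\tau$ at the end.

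First I would bound the tail $\sum_{k>N}$: using \eqref{8} to control $|\lambda_k^{b_j}-\lambda|$ from below by a constant times $k^{2/n}$ once $-\Re\lambda$ is large (this is why $\lambda\in\Sigma_\ast$ with $\tau_\ast$ large is needed), together with the continuity of the trace map $W^{2,q}(\Omega)\to L^q(\Gamma)$ applied to each $\phi_k^{b_1}-\phi_k^{b_2}$, and the a priori bound $\|\phi_k^{b_j}\|_{2,q}\le\tilde{\mathbf{c}}(1+|\lambda_k^{b_j}|)$ from \eqref{11}, one estimates the tail by a negative power of $N$ times $\|\varphi\|$, hence — after the eventual choice of $N$ as a power of $\tau$ — by $\mathbf{c}^\ast\tau^{-3}\|\varphi\|$. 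Next I would bound the finite part $\sum_{k\le N}$: here each term $|\lambda_k^{b_1}-\lambda_k^{b_2}|$, $\|\phi_k^{b_1}-\phi_k^{b_2}\|_{2,q}$ and $\|\psi_k^{b_1}-\psi_k^{b_2}\|_{0,q}$ is absorbed into $\delta(b_1,b_2)$, while the factors $(\lambda_k^{b_j}-\lambda)^{-1}$ are simply bounded by $\mathrm{dist}(\lambda,\mathbb{R})^{-1}=|\Im\lambda|^{-1}\le c$, and the remaining powers of $\lambda_k^{b_j}$ and the number of terms $N$ contribute the polynomial factor $\tau^{\sigma+n+2}$ (the exponent $\sigma+n+2$ reflecting $N\sim\tau^{\text{something}}$, the $n$-dependence of Weyl growth through $k^{2/n}\le\lambda_k$, and the extra powers from $\lambda$ itself). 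The key point is the Cauchy-type estimate for the DtN difference: one needs to pass from the bound on $(\Lambda^{b_1})^{(j)}-(\Lambda^{b_2})^{(j)}$ at a point $\lambda\in\Pi$ (Corollary \ref{cor1}) back to a bound on $\Lambda^{b_1}-\Lambda^{b_2}$ at $\lambda\in\Sigma_\ast$, which is done via the Taylor/Cauchy formula along the curve $\Pi$ and summing the resulting series in $j$; the geometric decay in $j$ coming from the factor $|\Re\lambda|^{-j\sigma/2}=\tau^{-j\sigma}$ in \eqref{55} (combined with $j!$ from \eqref{40}) guarantees convergence.

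Finally, optimizing the cutoff $N=N(\tau)$ to balance the two contributions $\tau^{-3}$ and $\tau^{\sigma+n+2}\delta(b_1,b_2)$ — actually the choice is already dictated by the requirement that the tail produce exactly $\tau^{-3}$ — yields \eqref{56}. The main obstacle is bookkeeping: one must carefully track how the holomorphic-extension argument (transporting estimates from the parabola $\Pi=\{-\tau^\sigma+2i\tau\}$ to the parabola $\Sigma_\ast=\{(\tau+i)^2\}$) interacts with the series \eqref{40}, ensuring that the change of spectral parameter does not destroy the summability in $k$ and that all constants remain of the asserted form $\mathbf{c}^\ast(n,\Omega,\mathfrak{c},\tilde{\mathfrak{c}},\mathbf{a},V_0,W_0,t)$; this is precisely where the hypothesis $\delta_+(b_1,b_2)<\infty$ (which by the remark after its definition forces $\delta(b_1,b_2)<\infty$) is used to legitimize the termwise manipulations. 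Since every estimate invoked — \eqref{8}, \eqref{11}, \eqref{40}, Lemma \ref{lem7}, Corollary \ref{cor1} — has been reproved here in the presence of the magnetic potential, the argument of \cite{Ch1} transfers with no structural change, only the replacement of $\partial_\nu$ by $\partial_{\nu_a}$ and of $-\Delta$ by $-\Delta_b$ throughout.
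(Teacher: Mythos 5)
Your proposal does not reproduce a workable argument, and the two mechanisms you invoke both break down. First, the splitting of the spectral series for $\Lambda^{b_1}(\lambda)-\Lambda^{b_2}(\lambda)$ at $\lambda\in\Sigma_\ast$ into $k\le N$ and $k>N$ is not available: formula \eqref{40} gives a usable series only after taking enough $\lambda$-derivatives, and at order $j=0$ there is no convergent BSD representation to split. Indeed, estimating the tail with the a priori bounds \eqref{11}, $\|\psi_k^{b}\|_{0,q}\lesssim 1+|\lambda_k^{b}|\sim k^{2/n}$ and $|\lambda_k^{b}-\lambda|\sim k^{2/n}$ give tail terms of size $k^{2/n}$, which diverge; and even the differenced zeroth-order series is not dominated by $\delta(b_1,b_2)$, because the factor multiplying $\|\psi_k^{b_1}-\psi_k^{b_2}\|_{0,q}$ is $O(1)$ rather than the weight $k^{-2/n}$ appearing in $\delta$. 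In particular your attribution of the term $\tau^{-3}$ in \eqref{56} to a spectral tail is wrong: that term comes from the $j=0$ case of \eqref{55}, i.e.\ from the resolvent/interpolation estimates evaluated at a point $\mu\in\Pi$ with $-\Re\mu=\tau^{\sigma}$, not from any cutoff $N(\tau)$, and no optimization of a cutoff occurs in this proposition.

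Second, your ``Cauchy/Taylor formula along $\Pi$, summing the series in $j$'' cannot transport the estimate from $\Pi$ to $\Sigma_\ast$. With $\mu=-\tau^{\sigma}+2i\tau$ and $\lambda=(\tau+i)^2$ one has $|\lambda-\mu|=\tau^{\sigma}+\tau^{2}-1\ge\tau^{\sigma}$, while \eqref{55} only gives $\|(\Lambda^{b_1})^{(j)}(\mu)(\varphi)-(\Lambda^{b_2})^{(j)}(\mu)(\varphi)\|_{0,q}\le\mathbf{c}_j^\ast\,\tau^{-3-j\sigma/2}\|\varphi\|$ (the decay is $\tau^{-j\sigma/2}$, not $\tau^{-j\sigma}$ as you state). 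Hence the $j$-th Taylor term is only controlled by $\mathbf{c}_j^\ast\,\tau^{j\sigma/2-3}/j!$, which blows up in $\tau$ for every $j\ge1$; moreover the constants $\mathbf{c}_j^\ast$ are generic constants with uncontrolled growth in $j$, and $\lambda$ lies outside the disc of convergence of the Taylor series at $\mu$ (whose radius is essentially $\operatorname{dist}(\mu,\sigma(A^{b_j}))\approx\tau^{\sigma}$). The argument the paper refers to (that of \cite[Proposition 3.1]{Ch1}) instead keeps only the value at $\mu\in\Pi$, which yields the $\tau^{-3}$ contribution, and expresses the increment between $\mu$ and $\lambda$ through a finite expansion in the spectral parameter (an integral of $\lambda$-derivatives along the horizontal segment $\Im s=2\tau$), whose integrand is given by the \emph{differenced} series coming from \eqref{40}: the extra powers of $(\lambda_k^{b}-s)^{-1}$ produce precisely the weight $k^{-2/n}$ needed to bound the sum by $\delta(b_1,b_2)$, with \eqref{8} controlling $|\lambda_k^{b}-s|$ and $\|\psi_k^{b}\|_{0,q}$, and with $\delta_+(b_1,b_2)<\infty$ justifying termwise passage to the boundary; this is what produces the factor $\tau^{\sigma+n+2}\delta(b_1,b_2)$. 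As written, your proof has a genuine gap at exactly this transport step and at the convergence of the series you split.
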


\section{The inverse spectral problem}

In this section, $\Omega$ is of class $C^{1,1}$.

\subsection{Integral formula}

For $b=(a,V)\in \mathcal{A}_+\times \mathcal{V}$, recall that
\[
\Delta_b=\Delta +2ia\cdot \nabla +V_a,
\]
where
\[
V_a=i\mathrm{div}(a)-|a|^2+V.
\]

In the following, we will use the Green's formula
\[
\int_\Omega \Delta_bu\overline{v}dx=\int_\Omega u\overline{\Delta_bv}dx+\int_\Gamma \partial_{\nu_a}u\overline{v}d\sigma-\int_\Gamma u\overline{\partial_{\nu_a}v}d\sigma.
\]
Of course, this formula is valid provided that each of its terms is well defined.

Let $b=(a,V)\in \mathcal{A}_+\times \mathcal{V}$, $\omega \in \mathbb{S}^{n-1}$ and $\tilde{a}\in C_0^\infty(\mathbb{R}^n,\mathbb{R}^n)$. Define
\[
\psi (x)=\psi (\tilde{a},\omega)(x)=-\int_{-\infty}^0\omega \cdot \tilde{a}(x+s\omega)ds.
\]
Then 
\begin{align*}
\omega\cdot \nabla \psi (x)&=-\int_{-\infty}^0\omega \cdot \nabla [\omega\cdot \tilde{a}(x+s\omega)]ds
\\
&=-\int_{-\infty}^0 \frac{d}{ds}[\omega\cdot \tilde{a}(x+s\omega)]ds=-\omega \cdot \tilde{a}(x).
\end{align*}
That is, $\psi$ satisfies
\[
(\nabla \psi+ \tilde{a})\cdot \omega=0.
\]

Let $\lambda\in \mathbb{C}\setminus (-\infty,0]$ and define  
\[
\varphi_+=\varphi_+(\tilde{a},\omega,\lambda)=e^{i[\sqrt{\lambda}\omega\cdot x+\psi]},
\] 
with the standard choice of the branch of the square root. We have
\[
\nabla \varphi_+=  i\varphi_+(\sqrt{\lambda}\omega +\nabla \psi).
\]
Thus,
\begin{equation}\label{1.1}
2ia\cdot \nabla \varphi_+ =- 2 \varphi_+(\sqrt{\lambda}\omega\cdot a+\nabla \psi \cdot a)
\end{equation}
and
\begin{align*}
\Delta\varphi_+&= i\varphi_+\mathrm{div}(\sqrt{\lambda}\omega +\nabla \psi)-\varphi_+(\sqrt{\lambda}\omega +\nabla \psi)\cdot(\sqrt{\lambda}\omega +\nabla \psi)
\\
&= i\varphi_+ \Delta \psi - \varphi_+ (\lambda+2\sqrt{\lambda}\nabla \psi \cdot \omega+|\nabla\psi|^2).
\end{align*}
As $\nabla \psi \cdot \omega=-\tilde{a}\cdot \omega$, we obtain
\begin{equation}\label{1.2}
\Delta \varphi_+=\varphi_+( i\Delta \psi-\lambda +2\sqrt{\lambda}\tilde{a}\cdot \omega -|\nabla\psi|^2).
\end{equation}
Putting together \eqref{1.1} and \eqref{1.2},  we get
\[
\Delta \varphi_+ +2ia\cdot \nabla \varphi_+= \varphi_+( i\Delta \psi-\lambda +2\sqrt{\lambda}(\tilde{a}-a)\cdot \omega -|\nabla\psi|^2-2\nabla \psi\cdot a)
\]
and then
\[
(\Delta_b+\lambda) \varphi_+=g_+\varphi_+,
\]
where
\[
g_+=g_+(\tilde{a},a,V,\omega,\lambda):= i\Delta \psi +2\sqrt{\lambda}(\tilde{a}-a)\cdot \omega -|\nabla\psi|^2-2\nabla \psi\cdot a+V_a.
\]

Also,  define
\[
\varphi_-=\varphi_-(\tilde{a},\omega,\lambda)=e^{i\big[\, \overline{\sqrt{\lambda}}\omega\cdot x+\psi\big]}.
\] 
Calculations similar to those we performed above show that
\[
(\Delta_b+\overline{\lambda}) \varphi_-=g_-\varphi_-,
\]
where
\[
g_-=g_-(\tilde{a},a,V,\omega,\lambda):= i\Delta \psi +2\overline{\sqrt{\lambda}}(\tilde{a}-a)\cdot \omega -|\nabla\psi|^2-2\nabla \psi\cdot a+V_a.
\]

For $j=1,2$, let $b_j=(a_j,V_j)\in \mathcal{A}_+\times \mathcal{V}$, $\tilde{a}_j\in C_0^\infty (\mathbb{R}^n,\mathbb{R}^n)$, $\omega_j\in \mathbb{S}^{n-1}$ and $\alpha\in W^{2,\infty}(\mathbb{R}^n)$ satisfying 
\[
\omega_2\cdot\nabla\alpha(x)=0. 
\]

We use in the following the notations 
\begin{align*}
&\psi_1=\psi (\tilde{a}_1,\omega_1),\quad \varphi_1=\varphi_+(\tilde{a}_1,\omega_1,\lambda),\quad g_1=g_+(\tilde{a}_1,a_1,V_1,\omega,\lambda),
\\
&\psi_2=\psi(\tilde{a}_2,\omega_2),\quad \varphi_2=\varphi_-(\tilde{a}_2,\omega_2,\lambda),\quad g_2=g_-(\tilde{a}_2,a_2,V_2,\omega,\lambda).
\end{align*}

If $u_1:=u^{b_1}(\lambda)(\varphi_1)$, $j=1,2$, then Green's formula above gives
\begin{align}
&\int_\Gamma \Lambda^{b_1}(\lambda)( \varphi_1) \overline{\alpha \varphi_2}d\sigma=\int_\Gamma \partial_{\nu_{a_1}}u_1\overline{\alpha \varphi_2}d\sigma \label{1.3}
\\
&\hskip 1cm =-\int_\Omega u_1\overline{\Delta_{b_1}(\alpha \varphi_2)}dx-\lambda \int_\Omega u_1\overline{\alpha \varphi_2}dx+\int_\Gamma \varphi_1\overline{\partial_{\nu_{a_1}}(\alpha\varphi_2)}d\sigma \nonumber
\\
&\hskip 1cm =-\int_\Omega u_1\overline{(\Delta_{b_1}+\overline{\lambda})(\alpha \varphi_2)}dx+\int_\Gamma \varphi_1\overline{\partial_{\nu_{a_1}}(\alpha\varphi_2)}d\sigma .\nonumber
\end{align}
We have
\[
\partial_{\nu_{a_1}}(\alpha \varphi_2)=i\alpha \varphi_2(\overline{\sqrt{\lambda}}\omega_2 \cdot \nu+\partial_\nu \psi_2+a_1\cdot \nu)+\partial_{\nu_{a_1}}\alpha \varphi_2,
\]
which we rewrite in the form 
\begin{equation}\label{1.4}
\partial_{\nu_{a_1}}(\alpha \varphi_2)=\rho\varphi_2, 
\end{equation}
where
\[
\rho:=i\alpha (\overline{\sqrt{\lambda}}\omega_2 \cdot \nu+\partial_\nu \psi_2+a_1\cdot \nu)+\partial_{\nu_{a_1}}\alpha .
\]
On the other hand, using $\omega_2\cdot \nabla \alpha(x)=0$, we find
\[
\Delta_{b_1}(\alpha \varphi_2)=\alpha \Delta_{b_1}\varphi_2+(\Delta \alpha +2i\nabla \psi_2\cdot \nabla \alpha)\varphi_2
\]
and since
\begin{align*}
(\Delta_{b_1}+\overline{\lambda})\varphi_2&=(\Delta_{b_2}+\overline{\lambda})\varphi_2+\Delta_{b_1}\varphi_2-\Delta_{b_2}\varphi_2
\\
&=g_2\varphi_2+2i(a_1-a_2)\cdot\nabla \varphi_2+(V_{a_1}-V_{a_2})\varphi_2
\\
&=g_2\varphi_2-2(a_1-a_2)\cdot(\overline{\sqrt{\lambda}}\omega_2+\nabla\psi_2) \varphi_2+(V_{a_1}-V_{a_2})\varphi_2,
\end{align*}
we obtain
\begin{equation}\label{1.5}
(\Delta_{b_1}+\overline{\lambda})(\alpha \varphi_2)=f\varphi_2,
\end{equation}
where
\[
f:=(\Delta \alpha +2i\nabla \psi_2\cdot \nabla \alpha)+[g_2 -2(a_1-a_2)\cdot(\overline{\sqrt{\lambda}}\omega_2+\nabla\psi_2) +V_{a_1}-V_{a_2}]\alpha.
\]
Using \eqref{1.4} and \eqref{1.5} in \eqref{1.3} yield
\begin{equation}\label{1.6}
\int_\Gamma \Lambda^{b_1}(\lambda)( \varphi_1) \overline{\alpha \varphi_2}d\sigma=-\int_\Omega u_1\overline{f\varphi_2}dx+\int_\Gamma \varphi_1\overline{\rho\varphi_2}d\sigma.
\end{equation}
Clearly, 
\begin{equation}\label{1.7}
u_1=\varphi_1+R^{b_1}(\lambda)((\Delta_{b_1}+\lambda)\varphi_1)=\varphi_1+R^{b_1}(\lambda)(g_1\varphi_1).
\end{equation} 

Putting \eqref{1.7} in \eqref{1.6} gives
\begin{align}
&\int_\Gamma \Lambda^{b_1}(\lambda)( \varphi_1) \overline{\alpha \varphi_2}d\sigma=-\int_\Omega \varphi_1\overline{f\varphi_2}dx\label{1.8}
\\
&\hskip 3cm - \int_\Omega R^{b_1}(\lambda)(g_1\varphi_1)\overline{f\varphi_2}dx+\int_\Gamma \varphi_1\overline{\rho\varphi_2}d\sigma.\nonumber
\end{align}

Next, let $v_1:=u^{b_2}(\lambda)(\varphi_1)$. Applying again Green's formula above, we find
\begin{align*}
&\int_\Gamma \Lambda^{b_2}(\lambda)( \varphi_1 ) \overline{\alpha \varphi_2}d\sigma=\int_\Gamma \partial_{\nu_{a_2}}v_1\overline{\alpha \varphi_2}d\sigma 
\\
&\hskip 1cm =-\int_\Omega v_1\overline{(\Delta_{b_2}+\overline{\lambda})(\alpha \varphi_2)}dx+\int_\Gamma \varphi_1\overline{\partial_{\nu_{a_2}}(\alpha\varphi_2)}d\sigma .
\end{align*}

Also, we have
\begin{align*}
(\Delta_{b_2}+\overline{\lambda})(\alpha \varphi_2)&=\alpha (\Delta_{b_2}+\overline{\lambda})\varphi_2+(\Delta \alpha +2i\nabla \psi_2\cdot \nabla \alpha)\varphi_2
\\
&=g_2\alpha \varphi_2+(\Delta \alpha +2i\nabla \psi_2\cdot \nabla \alpha)\varphi_2:=\ell\varphi_2.
\end{align*}
For further use, note that
\[
f=\ell+[-2(a_1-a_2)\cdot(\overline{\sqrt{\lambda}}\omega_2+\nabla\psi_2) +V_{a_1}-V_{a_2}]\alpha.
\]
If 
\[
\vartheta:= i\alpha (\overline{\sqrt{\lambda}}\omega_2 \cdot \nu+\partial_\nu \psi_2+a_2\cdot \nu)+\partial_{\nu_{a_2}}\alpha ,
\]
then we obtain
\begin{equation}\label{1.11}
\int_\Gamma \Lambda^{b_2}(\lambda)( \varphi_1 ) \overline{\alpha \varphi_2}d\sigma=-\int_\Omega v_1\overline{\ell\varphi_2}dx +\int_\Gamma \varphi_1\overline{\vartheta\varphi_2}d\sigma.
\end{equation}

We verify that $(\Delta_{b_2}+\lambda)\varphi_1=\hat{g}\varphi_1$, where
\[
\hat{g}=g_1 -2(a_2-a_1)\cdot(\sqrt{\lambda}\omega_1+\nabla\psi_1) +V_{a_2}-V_{a_1}.
\]
In consequence, we have
\[
v_1=\varphi_1+R^{b_2}(\hat{g}\varphi_1).
\]
This identity in \eqref{1.11} gives
\begin{align}
&\int_\Gamma \Lambda^{b_2}(\lambda)( \varphi_1) \overline{\alpha \varphi_2}d\sigma=-\int_\Omega \varphi_1\overline{\ell\varphi_2}dx\label{1.12}
\\
&\hskip 3cm - \int_\Omega R^{b_2}(\lambda)(\hat{g}\varphi_1)\overline{\ell\varphi_2}dx+\int_\Gamma \varphi_1\overline{\vartheta\varphi_2}d\sigma.\nonumber
\end{align}

Define
\begin{align*}
&\mathbf{d}:=\overline{f-\ell}=[-2(a_1-a_2)\cdot(\sqrt{\lambda}\omega_2+\overline{\nabla\psi_2}) +V_{a_1}-V_{a_2}]\overline{\alpha},
\\
&\upsilon:=\overline{\rho-\vartheta}=-2i(a_1-a_2)\cdot \nu \overline{\alpha}.
\end{align*}

Combining \eqref{1.8} and \eqref{1.12}, we obtain
\begin{align*}
&\int_\Gamma [\Lambda^{b_1}(\lambda)-\Lambda^{b_2}(\lambda)]( \varphi_1) \overline{\alpha \varphi_2}d\sigma=-\int_\Omega \mathbf{d}\varphi_1\overline{\varphi_2}dx
\\
&\hskip 1cm- \int_\Omega R^{b_1}(\lambda)(g_1\varphi_1)\overline{f\varphi_2}dx+\int_\Omega R^{b_2}(\lambda)(\hat{g}\varphi_1)\overline{\ell\varphi_2}dx+\int_\Gamma \upsilon\varphi_1\overline{\varphi_2}d\sigma.
\end{align*}
When $a_1=a_2$ on $\Gamma$, the identity above becomes
\begin{align}
&\int_\Gamma [\Lambda^{b_1}(\lambda)-\Lambda^{b_2}(\lambda)]( \varphi_1) \overline{\alpha \varphi_2}d\sigma=-\int_\Omega \mathbf{d}\varphi_1\overline{\varphi_2}dx\label{1.13}
\\
&\hskip 2cm - \int_\Omega R^{b_1}(\lambda)(g_1\varphi_1)\overline{f\varphi_2}dx+\int_\Omega R^{b_2}(\lambda)(\hat{g}\varphi_1)\overline{\ell\varphi_2}dx.\nonumber
\end{align}

\subsection{Proof of Theorem \ref{thm1}}

In this subsection, $a\in \mathcal{A}_0$ is arbitrary fixed and $\mathbf{c}=\mathbf{c}(n,\Omega,V_0,a)>0$ will denote a generic constant. Let $\chi\in C_0^\infty (\mathbb{R}^n)$ satisfying $\mathrm{supp}(\chi)\in B(0,1)$, $\chi \ge 0$ and $\|\chi\|_{0,1}=1$. Let $0<\epsilon\le 1$ and $\chi_\epsilon (x)=\epsilon^{-n}\chi (\epsilon ^{-1}x)$. Define $a_\epsilon \in C_0^\infty (\mathbb{R}^n,\mathbb{R}^n)$ by 
\begin{equation}\label{appa}
a_\epsilon =\chi_\epsilon \ast a.
\end{equation}
Then 
\begin{equation}\label{o0}
\|a_\epsilon\|_{0,\infty}\le \|\chi_\epsilon\|_{0,1}\|a\|_{0,\infty}\le \|a\|_{0,\infty}.
\end{equation}
For $\ell=(\ell_1,\ldots,\ell_n) \in \mathbb{N}^n$, we have $\partial^\ell \chi_\epsilon (x)=\epsilon^{-|\ell |-n}\chi(\epsilon^{-1}x)$, where $|\ell|=\ell_1+\ldots \ell_n$, and then
\[
\|\partial^\ell \chi_\epsilon\|_{0,1}=\epsilon^{-|\ell|}\|\partial^\ell \chi\|_{0,1}.
\]
Assume that $|\ell |\ge 1$ with, for instance, that $\ell_1\ge 1$. Let $\tilde{\ell}=(1,0,\ldots, 0)$. Using
\[
\partial^\ell a_\epsilon = \partial^{\ell -\tilde{\ell}}\chi_\epsilon \ast \partial ^{\tilde{\ell}} a,
\]
we obtain
\[
\|\partial^\ell a_\epsilon\|_{0,\infty} \le \epsilon^{-|\ell|+1}\|\partial^\ell \chi\|_{0,1}\|\partial^{\tilde{\ell}} a\|_{0,\infty},
\]
from which we derive that for all integer $k\ge 1$ we have
\begin{equation}\label{o1}
\|a_\epsilon\|_{k,\infty}\le \epsilon^{-k+1}\vartheta\|a\|_{1,\infty}.
\end{equation}
where $\vartheta=\vartheta (k)>0$ is a constant.

For all $\omega \in \mathbb{S}^{n-1}$ and $\lambda \in \mathbb{C}\setminus (-\infty,0]$, let
\[
\psi_\epsilon(x):=\psi(a_\epsilon,\omega )(x)=\int_{-\infty}^0\omega \cdot a_\epsilon(x+s\omega )ds
\]
and 
\begin{align*}
&\varphi_+(a_\epsilon,\omega,\lambda)(x)=e^{i(\sqrt{\lambda}\omega \cdot x+\psi_\epsilon)},
\\
&\varphi_-(a_\epsilon,\omega,\lambda)(x)=e^{i(\overline{\sqrt{\lambda}}\omega \cdot x+\psi_\epsilon)}.
\end{align*}

As $W^{2,\infty}(\Omega)$ is continuously embedded in $W^{2,q'}(\Omega)$, we have from \eqref{o1}
\begin{equation}\label{o2}
\|\psi_\epsilon \|_{2,q'}\le \mathbf{c} \epsilon^{-1}.
\end{equation}

On the other hand, we have 
\begin{align*}
|a_\epsilon(x)-a(x)|&\le \int_{\mathbb{R}^n} |\chi_\epsilon (y)a(x-y)-\chi_\epsilon(y) a(x)|dy
\\
&\le \|\chi\|_{0,\infty}\epsilon^{-n}\int_{B(0,\epsilon)}|a(x-y)-a(x)|dy
\\
&\le \epsilon |B(0,1)| \|\chi\|_{0,\infty}\|a\|_{1,\infty}.
\end{align*}
Hence
\begin{equation}\label{o3}
\|a_\epsilon-a\|_{0,\infty}\le \epsilon |B(0,1)| \|\chi\|_{0,\infty}\|a\|_{1,\infty}.
\end{equation}

For $\xi\in \mathbb{R}^n$ and $\tau >\max(|\xi|,1)$, let $\lambda=(\tau+i)^2$ and set
\[
\varrho=\sqrt{1-\frac{|\xi|^2}{4\tau^2}},\quad \omega_1=\varrho\omega-\frac{1}{2\tau}\xi,\quad \omega_2=\varrho\omega+\frac{1}{2\tau}\xi.
\]

Let 
\[
\psi_{j,\epsilon}:=\psi(a_\epsilon,\omega_j),\quad j=1,2,
\]
$\varphi_1=\varphi_+(a_\epsilon,\omega_1,\lambda)(x)$ and $\varphi_2=\varphi_-(a_\epsilon,\omega_2,\lambda)(x)$. Then \begin{align*}
\varphi_1\overline{\varphi_2}&= e^{i(\tau+i)\omega_1\cdot x+i\psi_{1,\epsilon}}e^{-i(\tau+i)\omega_2\cdot x-i\psi_{2,\epsilon}}
\\
&= e^{-i(1+\frac{i}{\tau})x\cdot\xi}e^{i(\psi_{1,\epsilon}-\psi_{2,\epsilon})},
\end{align*}
that we rewrite in the form
\begin{equation}\label{o3.0}
\varphi_1\overline{\varphi_2}=e^{-ix\cdot\xi}+\Phi_\epsilon,
\end{equation}
where
\[
\Phi_\epsilon:=e^{-i(1+\frac{i}{\tau})x\cdot\xi}e^{i(\psi_{1,\epsilon}-\psi_{2,\epsilon})}-e^{-ix\cdot\xi}=e^{-ix\cdot\xi}(e^{\frac{1}{\tau}x\cdot\xi+i(\psi_{1,\epsilon}-\psi_{2,\epsilon})}-1).
\]

On the other hand,
\begin{align*}
\psi_{1,\epsilon}-\psi_{2,\epsilon}&=\int_{-\infty}^0\omega_1 \cdot a_\epsilon(x+s\omega_1)ds-\int_{-\infty}^0\omega_2 \cdot a_\epsilon(x+s\omega_2 )ds
\\
&=\int_{-\infty}^0(\omega_1-\omega_2) \cdot a_\epsilon(x+s\omega_1)ds
\\
&\hskip 3cm +\int_{-\infty}^0\omega_2 \cdot [a_\epsilon(x+s\omega_1)-a_\epsilon(x+s\omega_2)]ds.
\\
&=-\frac{1}{\tau}\int_{-\infty}^0 \xi \cdot a_\epsilon(x+s\omega_1)ds
\\
&\hskip 3cm +\int_{-\infty}^0\omega_2 \cdot [a_\epsilon(x+s\omega_1)-a_\epsilon(x+s\omega_2)]ds,
\end{align*}
which, combined with \eqref{o0} and \eqref{o1} (with $k=1$), gives
\begin{equation}\label{o3.1}
\|\psi_{1,\epsilon}-\psi_{2,\epsilon}\|_{0,\infty}\le \mathbf{c}\frac{|\xi| }{\tau}.
\end{equation}
In view of \eqref{o3.1}, we verify that
\begin{equation}\label{o3.2}
\|\Phi_\epsilon\|_{0,\infty}\le \mathbf{c}\frac{|\xi| }{\tau}.
\end{equation}

For $j=1,2$, let $V_j\in \mathcal{V}$ and  $b_j=(a,V_j)$. We will apply formula \eqref{1.13} with $\alpha=1$, $a_1=a_2=a$ and  $\tilde{a}_1=\tilde{a}_2=a_\epsilon$, for which we have
\begin{align*}
&\mathbf{d}=V_1-V_2,
\\
&g_1=i\Delta \psi_\epsilon+2\sqrt{\lambda}(a_\epsilon-a)\cdot \omega_1-|\nabla \psi_\epsilon|^2-2\nabla \psi_\epsilon\cdot a +i\mathrm{div}(a)-|a|^2+V_1.
\\
&g_2=i\Delta \psi_\epsilon+2\overline{\sqrt{\lambda}}(a_\epsilon-a)\cdot \omega_2-|\nabla \psi_\epsilon|^2-2\nabla \psi_\epsilon\cdot a +i\mathrm{div}(a)-|a|^2+V_2.
\end{align*}

Using \eqref{o2} and \eqref{o3}, we get
\begin{equation}\label{o3.3}
\|g_j\|_{0,m}\le \mathbf{c}(\epsilon ^{-1}+\tau \epsilon),\quad j=1,2.
\end{equation}

Taking $\epsilon=\tau^{-\frac{1}{2}}$ in the inequality above, we obtain
\[
\|g_j\|_{0,m}\le \mathbf{c}\tau^{\frac{1}{2}},\quad j=1,2.
\]
If $\Phi=\Phi_{\tau^{-\frac{1}{2}}}$, then \eqref{o3.2} yields 
\begin{equation}\label{o3.4}
\|\Phi\|_{0,\infty}\le \mathbf{c}\frac{|\xi| }{\tau}.
\end{equation}
It follows from \eqref{o3.0}, with $\epsilon=\tau^{-\frac{1}{2}}$, that 
\[
\varphi_1\overline{\varphi_2}=e^{-ix\cdot\xi}+\Phi,
\]
where $\Phi$ satisfies \eqref{o3.4}.

Until the end of this section, $\tilde{\mathbf{c}}=\tilde{\mathbf{c}}(n,\Omega,\Omega_0, a,V_0,W_0,t)>0$ will denote a generic constant.

We have
\begin{align*}
&\left|\int_\Omega R^{b_1}(g_1\varphi_1)\overline{f\varphi_2}dx-\int_\Omega R^{b_2}(\hat{g}\varphi_1)\overline{\ell\varphi_2}dx\right|
\\
&\hskip 1cm = \left|\int_\Omega R^{b_1}(g_1\varphi_1)\overline{(f-\ell)\varphi_2}dx-\int_\Omega (R^{b_2}(\hat{g}\varphi_1)-R^{b_1}(g_1\varphi_1))\overline{\ell\varphi_2}dx\right|
\\
&
\hskip 1cm \le \left|\int_\Omega R^{b_1}(g_1\varphi_1)\mathbf{d}\overline{\varphi_2}dx\right|+\left|\int_\Omega (u_1-v_1)\overline{g_2\varphi_2}dx\right|.
\end{align*}
Let $r=p_{\frac{1}{8}}$, where $p_{\frac{1}{8}}$ is given by \eqref{rtp} with $\theta=\frac{1}{8}$. Using that $\|\varphi_j\|_{0,\infty}=\|e^{|x|}\|_{0,\infty}$, $j=1,2$, we derive from \eqref{27}
\[
\left|\int_\Omega R^{b_1}(g_1\varphi_1)\mathbf{d}\overline{\varphi_2}dx\right|\le \mathbf{c}\tau^{-\frac{3}{4}}\|g_1\varphi_1\|_{0,r}\|\mathbf{d}\varphi_2\|_{0,r}\le \mathbf{c}\tau^{-\frac{1}{4}}.
\]
On the other hand, it follows from \eqref{53} that
\[
\left|\int_\Omega (u_1-v_1)\overline{g_2\varphi_2}dx\right|\le \mathbf{c}\tau^{-1}\|g_2\varphi_2\|_{0,2}\le \mathbf{c}\tau^{-\frac{1}{2}}.
\]

Whence
\begin{equation}\label{o4}
\left|\int_\Omega R^{b_1}(g_1\varphi_1)\overline{f\varphi_2}dx-\int_\Omega R^{b_2}(\hat{g}\varphi_1)\overline{\ell\varphi_2}dx\right|\le \mathbf{c}\tau^{-\frac{1}{4}}.
\end{equation}

Assume that $(b_1,b_2)$ satisfies the assumptions of Theorem \ref{thm1} and set $V=V_1-V_2\in L_\ast^n(\mathbb{R}^n,\mathbb{R}^n)$. Putting together \eqref{56}, \eqref{1.13}, \eqref{o3.3}, \eqref{o3.4} and \eqref{o4}, we obtain
\[
\mathbf{\tilde{c}}|\hat{V}(\xi)|\le \tau^{-\frac{1}{4}}+\frac{|\xi|}{\tau}+\tau^{\sigma +n+4}\delta(b_1,b_2),\quad \tau \ge \tau_\ast,
\] 
where $\tau^\ast$ is as in Proposition \ref{pro2}.

In view of inequality above, mimicking the proof of \cite[Theorem 1.2]{Ch1} (see also the proof of Theorem \ref{thm2} below), we obtain
\[
\|V_1-V_2\|_{H^{-1}(\Omega)}\le \tilde{\mathbf{c}}\delta(b_1,b_2)^{\beta_0},
\]
where
\[
\beta_0=\frac{1}{2(n+2)(\sigma+n+5)}.
\]
Theorem \ref{thm1} is then proved.

\subsection{Proof of Theorem \ref{thm2}}

Let $(b_1,b_2)=((a_1,V_1), (a_2,V_2))\in \mathscr{B}_+$ such that $\delta_+(b_1,b_2)<\infty$. Recall that $a:=a_1-a_2\in W^{2,\infty}_\ast(\mathbb{R}^n,\mathbb{R}^n)$. 

In the remaining part of this subsection, $\tilde{\mathbf{c}}=\tilde{\mathbf{c}}(n,\Omega,\Omega_0,\mathbf{a}, V_0,W_0,t)>0$ will denote a generic constant. 

Let $\xi\in \mathbb{R}^n$ and $\omega\in \mathbb{S}^{n-1}$ such that $\omega\cdot \xi=0$. Let $\lambda=(\tau+i)^2$, where $\tau >\max(|\xi|,1)$,  and set
\[
\varrho=\sqrt{1-\frac{|\xi|^2}{4\tau^2}},\quad \omega_1=\varrho\omega-\frac{1}{2\tau}\xi,\quad \omega_2=\varrho\omega+\frac{1}{2\tau}\xi,\quad \theta=\varrho \xi-\frac{|\xi|^2}{2\tau}\omega\in \omega_2^\bot.
\]

Using that $\varrho^2+\frac{|\xi|^2}{4\tau^2}=1$ and $(1-\varrho)^2\le 1-\varrho^2$, we find
\begin{align*}
|\xi -\theta|^2&=(1-\varrho)^2|\xi|^2+\frac{|\xi|^4}{4\tau^2}=(1-\varrho)^2|\xi|^2+(1-\varrho^2)|\xi|^2
\\
&\le 2|\xi|^2(1-\varrho^2)=\frac{|\xi|^4}{2\tau^2}.
\end{align*}
Hence
\begin{equation}\label{o5}
|\xi-\theta|\le \frac{|\xi|^2}{\tau}.
\end{equation}
Proceeding similarly as above, we obtain
\begin{equation}\label{o5.1}
|\omega-\omega_2|\le \frac{|\xi|}{\tau}.
\end{equation}

Let $\eta \in \mathbb{S}^{n-1}\cap \omega^\bot$ and $0<\epsilon \le 1$. If $a_\epsilon$ is as in \eqref{appa}, then define
\begin{align*}
&\tilde{\psi}_\epsilon(x)=-\int_{-\infty}^0\omega_2\cdot a_\epsilon (x+s\omega_2)ds,
\\
&\overline{\alpha}=e^{i\theta \cdot x}\eta \cdot \nabla [e^{i\tilde{\psi}_\epsilon(x)}e^{-i\theta \cdot x}]=\eta\cdot (i\nabla \tilde{\psi}_\epsilon(x)-i\theta)e^{i\tilde{\psi}_\epsilon(x)},
\\
&\tilde{\psi}(x)=-\int_{-\infty}^0\omega\cdot a(x+s\omega)ds,
\\
&\overline{\beta}=e^{i\xi \cdot x}\eta \cdot \nabla [e^{i\tilde{\psi}(x)}e^{-i\xi \cdot x}]=\eta\cdot (i\nabla \tilde{\psi}(x)-i\xi)e^{i\tilde{\psi}(x)}.
\end{align*}

We decompose $\tilde{\psi}_\epsilon-\tilde{\psi}$ into three terms:
\begin{equation}\label{o7}
\tilde{\psi}_\epsilon-\tilde{\psi}=\phi_1+\phi_2+\phi_3,
\end{equation}
where
\begin{align*}
&\phi_1(x)=\int_{\mathbb{R}}(\omega-\omega_2)\cdot a(x+s\omega)ds,
\\
&\phi_2(x)=\int_{\mathbb{R}}\omega_2\cdot (a(x+s\omega)-a(x+s\omega_2))ds,
\\
&\phi_3(x)=\int_{\mathbb{R}}\omega_2\cdot (a(x+s\omega_2)-a_\epsilon(x+s\omega_2))ds.
\end{align*}

In the following, $c_0=c_0(\Omega,\tilde{\mathfrak{c}})>0$  will denote a generic constant. We obtain from \eqref{o5.1}
\begin{equation}\label{o8}
\|\phi_1\|_{0,\infty}\le c_0\frac{|\xi|}{\tau}.
\end{equation}
 With aid of the estimates of the preceding subsection, we get
 \begin{align}
 &\|\phi_2\|_{0,\infty}\le c_0\frac{|\xi|}{\tau},\label{o9}
 \\
 &\|\phi_3\|_{0,\infty}\le c_0\epsilon .\label{o10}.
 \end{align}
 
 Putting \eqref{o8}, \eqref{o9} and \eqref{o10} in \eqref{o7}, we obtain
 \begin{equation}\label{o11}
 \|\tilde{\psi}_\epsilon-\tilde{\psi}\|_{0,\infty}\le c_0\left(\epsilon+\frac{|\xi|}{\tau}\right).
 \end{equation}
 Since
 \[
e^{i\tilde{\psi}_\epsilon}-e^{i\tilde{\psi}}=i\int_0^1e^{i(\tilde{\psi}+s(\tilde{\psi}_\epsilon-\tilde{\psi}))}(\tilde{\psi}_\epsilon-\tilde{\psi})ds,
\]
\eqref{o11} yields
  \begin{equation}\label{o12}
 \|e^{i\tilde{\psi}_\epsilon}-e^{i\tilde{\psi}}\|_{0,\infty}\le c_0\left(\epsilon+\frac{|\xi|}{\tau}\right).
 \end{equation}
 
For $1\le k\le n$, we have
 \begin{align*}
 &\partial_k\tilde{\psi}_\epsilon(x)=-\int_{\mathbb{R}}\omega_2\cdot \partial_ka_\epsilon (x+s\omega_2)ds,
 \\
 &\partial_k\tilde{\psi}(x)=-\int_{\mathbb{R}}\omega\cdot \partial_ka (x+s\omega)ds.
 \end{align*}
We write
\begin{equation}\label{o13}
 \partial_k\tilde{\psi}_\epsilon-\partial_k\tilde{\psi}=\vartheta_1+\vartheta_2+\vartheta_3,
\end{equation} 
where
\begin{align*}
&\vartheta_1(x)=\int_{\mathbb{R}}(\omega-\omega_2)\cdot \partial_ka(x+s\omega)ds
\\
&\vartheta_2(x)=\int_{\mathbb{R}}\omega_2\cdot (\partial_ka (x+s\omega)-\partial_ka (x+s\omega_2))ds,
\\
& \vartheta_3(x)=\int_{\mathbb{R}}\omega_2\cdot (\partial_ka (x+s\omega_2)-\partial_ka_\epsilon(x+s\omega_2))ds, 
\end{align*}
 We proceed as above to obtain
\begin{align*}
& \|\vartheta_1\|_{0,\infty}\le c_0\frac{|\xi|}{\tau},
\\
&\|\vartheta_2\|_{0,\infty}\le c_0\frac{|\xi|}{\tau},
\\
&\|\vartheta_3\|_{0,\infty}\le c_0\epsilon,
\end{align*}
where we used \eqref{o3} with $a$ replaced by $\partial_ka$ to establish the last inequality.

 Inequalities above in \eqref{o13} give
 \begin{equation}\label{o14}
 \|\nabla \tilde{\psi}_\epsilon -\nabla \tilde{\psi}\|_{0,\infty}\le c_0\left( \frac{|\xi|}{\tau}+\epsilon\right).
 \end{equation}

In light of \eqref{o5}, \eqref{o12} and \eqref{o14}, we verify that we have
\begin{equation}\label{o15}
\|\alpha -\beta\|_{0,\infty}\le c_0(1+|\xi|)\left(\frac{|\xi|}{\tau}+\epsilon\right).
\end{equation}

We shall also use in the sequel the following inequalities
\begin{align}
&\|\alpha \|_{0,\infty}\le c_0\left(1+\frac{|\xi|^2}{\tau}\right),\label{o16}
\\
&\|\beta \|_{0,\infty}\le c_0(1+|\xi|^2).\label{o16.1}
\end{align}

For $j=1,2$, let $a_{j,\epsilon}$ given by \eqref{appa} when $a=a_j$, and define
\[
\psi_{j,\epsilon}(x)=-\int_{-\infty}^0\omega_j\cdot a_{j,\epsilon}(x+s\omega_j)ds.
\]
Then set
\[
\varphi_1:=e^{i(\sqrt{\lambda}x\cdot \omega_1+\psi_{1,\epsilon})},\quad \varphi_2:=e^{i(\overline{\sqrt{\lambda}}x\cdot\omega_2 +\psi_{2,\epsilon})}.
\]

Recall that we have 
\[
\mathbf{d}=[-2\sqrt{\lambda}a\cdot \omega_2-2a\cdot\overline{\nabla\psi_{2,\epsilon}}+V_{a_1}-V_{a_2}]\overline{\alpha}
\]
and hence
\begin{align*}
&\frac{\mathbf{d}}{\sqrt{\lambda}}=-2a\cdot \omega_2\overline{\beta}+2a\cdot \omega_2\overline{\beta-\alpha}
\\
&\hskip 3cm +\frac{1}{\sqrt{\lambda}}[-2a\cdot \overline{\nabla\psi_{2,\epsilon}}+V_{a_1}-V_{a_2}]\overline{\alpha}.
\\
&\hskip .5cm=-2a\cdot \omega\overline{\beta}+2a\cdot (\omega-\omega_2)\overline{\beta}+2a\cdot \omega_2\overline{\beta-\alpha}
\\
&\hskip 3cm +\frac{1}{\sqrt{\lambda}}[-2a\cdot \overline{\nabla\psi_{2,\epsilon}}+V_{a_1}-V_{a_2}]\overline{\alpha}.
\end{align*}
From \eqref{o5.1}, \eqref{o15}, \eqref{o16} and \eqref{o16.1}, we can write
\begin{equation}\label{o17}
\frac{\mathbf{d}}{\sqrt{\lambda}}=-2a\cdot \omega\overline{\beta}+\mathbf{r},
\end{equation}
where $\mathbf{r}$ satisfies 
\begin{equation}\label{o18}
\|\mathbf{r}\|_{0,\infty}\le \tilde{\mathbf{c}}\left( (1+|\xi|^2)\frac{|\xi|}{\tau}+(1+|\xi|)\left(\frac{|\xi|}{\tau}+\epsilon\right)+\frac{1}{\tau}\left(1+\frac{|\xi|^2}{\tau}\right)\right).
\end{equation}

Next, we have
\begin{align*}
\varphi_1\overline{\varphi_2}&=e^{-i\xi\cdot x+\frac{1}{\tau}x\cdot \xi}e^{i(\psi_{1,\epsilon}-\psi_{2,\epsilon})}
\\
&=e^{-i\xi\cdot x}e^{i\tilde{\psi}}+e^{-i\xi\cdot x}\left(e^{\frac{1}{\tau}x\cdot \xi}e^{i(\psi_{1,\epsilon}-\psi_{2,\epsilon})}-e^{i\tilde{\psi}}\right)
\\
&=e^{-i\xi\cdot x}e^{i\tilde{\psi}}+e^{-i\xi\cdot x}\left(e^{\frac{1}{\tau}x\cdot \xi}-1\right)e^{i(\psi_{1,\epsilon}-\psi_{2,\epsilon})}+e^{-i\xi\cdot x}\left(e^{i(\psi_{1,\epsilon}-\psi_{2,\epsilon})}-e^{i\tilde{\psi}}\right)
\\
&=e^{-i\xi\cdot x}e^{i\tilde{\psi}}+e^{-i\xi\cdot x}\left(e^{\frac{1}{\tau}x\cdot \xi}-1\right)e^{i(\psi_{1,\epsilon}-\psi_{2,\epsilon})}+e^{-i(\xi\cdot x-\tilde{\psi})}\left(e^{i(\psi_{1,\epsilon}-\psi_{2,\epsilon}-\tilde{\psi})}-1\right).
\end{align*}

That is we have
\begin{equation}\label{o19}
\varphi_1\overline{\varphi_2}=e^{-i\xi\cdot x}e^{i\tilde{\psi}}+\Phi_1+\Phi_2,
\end{equation}
where
\begin{align*}
&\Phi_1=e^{-i\xi\cdot x}\left(e^{\frac{1}{\tau}x\cdot \xi}-1\right)e^{i(\psi_{1,\epsilon}-\psi_{2,\epsilon})},
\\
&\Phi_2=e^{-i(\xi\cdot x-\tilde{\psi})}\left(e^{i(\psi_{1,\epsilon}-\psi_{2,\epsilon}-\tilde{\psi})}-1\right).
\end{align*}
We verify that
\begin{align*}
&\psi_{1,\epsilon}-\psi_{2,\epsilon}-\tilde{\psi}=\tilde{\psi}_\epsilon-\tilde{\psi}
\\
&+\int_{-\infty}^0(\omega_2-\omega_1)\cdot a_{1,\epsilon}(x+s\omega_1)ds+\int_{-\infty}^0\omega_2\cdot (a_{1,\epsilon}(x+s\omega_2)-a_{1,\epsilon}(x+s\omega_1))ds.
\end{align*}
Since the last two terms of the above identity can be estimated by proceeding in a similar way to that used to establish \eqref{o11}, we obtain 
\[
\|\Phi_2\|_{0,\infty}\le \tilde{\mathbf{c}}\left(\epsilon+\frac{|\xi|}{\tau}\right).
\]
This, $\|\Phi_1\|_{0,\infty}\le \tilde{\mathbf{c}}\frac{|\xi|}{\tau}$ together with \eqref{o19} imply
\[
\varphi_1\overline{\varphi_2}=e^{-i\xi\cdot x}e^{i\tilde{\psi}}+\Phi,
\]
where $\Phi$ satisfies
\begin{equation}\label{o20}
\|\Phi\|_{0,\infty}\le \tilde{\mathbf{c}}\left(\epsilon+\frac{|\xi|}{\tau}\right).
\end{equation}
Combining \eqref{o17} and \eqref{o19}, we find
\begin{align*}
\frac{\mathbf{d}}{\sqrt{\lambda}}\varphi_1\overline{\varphi_2}&=\left(-2a\cdot \omega\overline{\beta}+\mathbf{r}\right)\left(e^{-i\xi\cdot x}e^{i\tilde{\psi}}+\Phi\right)
\\
&:=-2a\cdot \omega\overline{\beta}e^{-i\xi\cdot x}e^{i\tilde{\psi}}+ \mathbf{R},
\end{align*}
where
\[
\mathbf{R}=-2a\cdot \omega\overline{\beta}\Phi+\mathbf{r}\left(e^{-i\xi\cdot x}e^{i\tilde{\psi}}+\Phi\right).
\]
Using \eqref{o16.1}, \eqref{o18}, \eqref{o20} and
\[
\|e^{-i\xi\cdot x}e^{i\tilde{\psi}}+\Phi\|_{0,\infty}\le \epsilon+\frac{|\xi|}{\tau}+1\le 3,
\]
we get
\[
\|\mathbf{R}\|_{0,\infty}\le \tilde{\mathbf{c}}\Psi,
\]
where
\begin{align*}
&\Psi:=\left(\epsilon+\frac{|\xi|}{\tau}\right)(1+|\xi|^2)
\\
&\hskip 2cm+(1+|\xi|^2)\frac{|\xi|}{\tau}+(1+|\xi|)\left(\frac{|\xi|}{\tau}+\epsilon\right)+\frac{1}{\tau}\left(1+\frac{|\xi|^2}{\tau}\right).
\end{align*}
In consequence,
\begin{equation}\label{o21}
\int_\Omega \frac{\mathbf{d}}{\sqrt{\lambda}}\varphi_1\overline{\varphi_2}=-2\int_\Omega a\cdot \omega\overline{\beta}e^{-i\xi\cdot x}e^{i\tilde{\psi}}dx+\mathbf{I},
\end{equation}
with
\begin{equation}\label{o22}
|\mathbf{I}|\le \tilde{\mathbf{c}}\Psi.
\end{equation}

For $\xi\in \mathbb{R}^n$ such that $\xi_k\ne 0$, $1\le k\le n$, define
\[
\eta_{k,\ell}=\frac{\xi_k\mathbf{e}_k +\xi_k\mathbf{e}_\ell}{\sqrt{\xi_k^2+\xi_\ell^2}}.
\]
where $(\mathbf{e}_1,\ldots ,\mathbf{e}_n)$ is the usual Euclidean basis of $\mathbb{R}^n$.

From the computations in \cite[Lemma 4.1]{Ki}, we obtain from \eqref{o21}
\begin{equation}\label{o23}
\int_\Omega \frac{\mathbf{d}}{\sqrt{\lambda}}\varphi_1\overline{\varphi_2}=2\mathscr{F}(\partial_\ell a_k-\partial_ka_\ell)(\xi)+\mathbf{I},\quad \eta=\eta_{k,\ell}.
\end{equation}

We will now apply the identity \eqref{1.13}. To this end, recall that
\begin{align*}
&g_1=i\Delta \psi_{1,\epsilon}+2\sqrt{\lambda}(a_{1,\epsilon}-a_1)\cdot \omega_1-|\nabla \psi_{1,\epsilon}|^2-2\nabla\psi_{1,\epsilon}\cdot a_1+V_{a_1},
\\
&\hat{g}=g_1+2a\cdot(\sqrt{\lambda}\omega_1+\nabla \psi_{1,\epsilon})+V_{a_2}-V_{a_1},
\\
&g_2=i\Delta \psi_{2,\epsilon}+2\sqrt{\lambda}(a_{2,\epsilon}-a_2)\cdot \omega_2-|\nabla \psi_{2,\epsilon}|^2-2\nabla\psi_{1,\epsilon}\cdot a_2+V_{a_2},
\\
&\ell=g_2\alpha+\Delta \alpha +2i\nabla \psi_{2,\epsilon}\cdot \nabla \alpha.
\\
&f=\ell +[-2a\cdot(\overline{\sqrt{\lambda}}\omega_2+\nabla\psi_{2,\epsilon})+V_{a_1}-V_{a_2}]\alpha,
\end{align*}
We verify that the following inequalities hold
\begin{align*}
&\|g_1\|_{0,\infty}+\|g_2\|_{0,\infty}\le \tilde{\mathbf{c}}(\epsilon^{-1}+\tau\epsilon), 
\\
&\|\hat{g}\|_{0,\infty}\le \tilde{\mathbf{c}}(\epsilon^{-1}+\tau\epsilon+\tau),
\\
&\|\ell\|_{0,\infty}\le \tilde{\mathbf{c}}\left( \left( 1+\frac{|\xi|^2}{\tau}\right)(\epsilon^{-1}+\tau \epsilon)+\epsilon^{-2}\right),
\\
&\|f\|_{0,\infty}\le \tilde{\mathbf{c}}\left( \left( 1+\frac{|\xi|^2}{\tau}\right)(\epsilon^{-1}+\tau )+\epsilon^{-2}\right),
\end{align*}

Now, assume that in addition that $\tau \ge |\xi|^2$ and fix $\epsilon=\tau^{-\frac{1}{3}}$. Then the preceding inequalities become
\begin{align*}
&\|g_1\|_{0,\infty}+\|g_2\|_{0,\infty}\le \tilde{\mathbf{c}}\tau^{\frac{2}{3}}, 
\\
&\|\hat{g}\|_{0,\infty}\le \tilde{\mathbf{c}}\tau,
\\
&\|\ell\|_{0,\infty}\le \tilde{\mathbf{c}}\tau^{\frac{2}{3}},
\\
&\|f\|_{0,\infty}\le \tilde{\mathbf{c}}\tau .
\end{align*}
In particular, we have
\begin{equation}\label{ma1}
\frac{\|g_1\|_{0,\infty}\|f\|_{0,\infty}+\|\hat{g}\|_{0,\infty}\|\ell\|_{0,\infty}}{\tau}\le \tilde{\mathbf{c}}\tau^{\frac{2}{3}}.
\end{equation}

Applying \eqref{27} with $\theta=\frac{1}{8}$, we obtain from \eqref{ma1}
\begin{equation}\label{ma2}
\frac{1}{\tau}\left|\int_\Omega R^{b_1}(\lambda)(g_1\varphi_1)\overline{f\varphi_2}dx-\int_\Omega R^{b_2}(\lambda)(\hat{g}\varphi_1)\overline{\ell\varphi_2}dx\right|\le \tilde{\mathbf{c}}\tau^{-\frac{1}{12}}.
\end{equation}

In addition, suppose that $\tau\ge |\xi|^8$. Then \eqref{o22} yields
\begin{equation}\label{ma3}
|\mathbf{I}|\le \tilde{\mathbf{c}}\tau^{-\frac{1}{12}}.
\end{equation}

Let $\tau_\ast$ be as in Proposition \ref{pro2}. For $\tau \ge \max(\tau_\ast, |\xi|,|\xi|^2,|\xi|^8)$, in light of  \eqref{ma2} and \eqref{ma3}, combining \eqref{1.13} and \eqref{o23}, we obtain
\begin{equation}\label{o24}
\tilde{\mathbf{c}}|\mathscr{F}(\partial_\ell a_k-\partial_ka_\ell)(\xi)|\le \tau^{-\frac{1}{12}}+\|\Lambda^1(\lambda)-\Lambda^2(\lambda)\|\|\varphi_1\|_{2,q'}.
\end{equation}
In view of \eqref{o2} with $\epsilon=\tau^{-\frac{1}{3}}$, we verify that
\[
\|\varphi_1\|_{2,q'}\le \tilde{\mathbf{c}}\tau^2.
\]
This and \eqref{56} in \eqref{o24} imply
\[
\tilde{\mathbf{c}}|\mathscr{F}(\partial_\ell a_k-\partial_ka_\ell)(\xi)|\le \tau^{-\frac{1}{12}}+\tau^{\sigma+n+4}\delta(b_1,b_2).
\]
As the set $\{\xi\in \mathbb{R}^n;\; \xi_k\ne 0,\; 1\le k\le n\}$ is dense in $\mathbb{R}^n$, the inequality above holds for all $\xi \in \mathbb{R}^n$. Whence
\begin{equation}\label{o25}
\tilde{\mathbf{c}}|\mathscr{F}(\partial_\ell a_k-\partial_ka_\ell)(\xi)|^2\le \tau^{-\frac{1}{6}}+\tau^{2(\sigma+n+4)}\delta(b_1,b_2)^2.
\end{equation}

Let $0<\gamma <\frac{1}{8}$ to be specified later. Integrating \eqref{o25} on $B(0,\tau^\gamma)$, we obtain
\[
\tilde{\mathbf{c}}\int_{B(0,\tau^\gamma)}|\mathscr{F}(\partial_\ell a_k-\partial_ka_\ell)(\xi)|^2d\xi\le \tau^{-\frac{1}{6}+n\gamma}+\tau^{2(\sigma+n+4)+n\gamma}\delta(b_1,b_2)^2.
\]

On the other hand as $\partial_\ell a_k-\partial_ka_\ell$  belongs to $W^{1,\infty}(\mathbb{R}^n)$ and $\mathrm{supp}(\partial_\ell a_k-\partial_ka_\ell)\subset \Omega_0$, $\partial_\ell a_k-\partial_ka_\ell\in H^1(\mathbb{R}^n)$ and $\|\partial_\ell a_k-\partial_ka_\ell\|_{1,2}\le \dot{c}$, where $\dot{c}>0$ is constant only depending on $\tilde{\mathfrak{c}}$ and $\Omega_0$.

We verify that
\[
\int_{|\xi |\ge \tau^\gamma}|\mathscr{F}(\partial_\ell a_k-\partial_ka_\ell)(\xi)|^2d\xi\le \dot{c}\tau^{-2\gamma}.
\]
This inequality and the preceding one, combined with Parseval's identity, gives 
\[
\tilde{\mathbf{c}}\|\partial_\ell a_k-\partial_ka_\ell\|_{0,2}^2\le  \tau^{-\frac{1}{6}+n\gamma}+\tau^{-2\gamma}+\tau^{2(\sigma+n+4)+n\gamma}\delta(b_1,b_2)^2.
\]
Taking in this inequality $\gamma=\frac{1}{6(n+2)}$ yields
\begin{equation}\label{o26}
\tilde{\mathbf{c}}\|\partial_\ell a_k-\partial_ka_\ell\|_{0,2}^2\le  \tau^{-\frac{1}{3(n+2)}}+\tau^{\frac{12(\sigma+n+4)(n+2)+n}{6(n+2)}}\delta(b_1,b_2)^2.
\end{equation}

From \eqref{o26}, we have
\[
\tilde{\mathbf{c}}\|\partial_\ell a_k-\partial_ka_\ell\|_{0,2}\le  \tau^{-\frac{1}{6(n+2)}}+\tau^{\frac{12(\sigma+n+4)(n+2)+n}{12(n+2)}}\delta(b_1,b_2).
\]
A standard minimization with respect to $\tau$ enables us to obtain
\[
\|da\|_{0,2}\le \tilde{\mathbf{c}}\delta (b_1,b_2)^{\beta_1},
\]
where
\[
\beta_1=\frac{2}{(n+2)(12(\sigma+n+4)+1)}.
\]
The proof of Theorem \ref{thm2} is complete.

\section*{Part II:  Anisotropic case}

\section{Introduction}

\subsection{Notations and definitions}

Let $(M,g)$ be a $C^\infty$-smooth compact Riemannian manifold with boundary $\Gamma$. We use in the rest of this text the Einstein summation convention for subscript quantities. This means that a term appearing twice, as a superscript and a subscript, is assumed to be summed from $1$ to $n$.

If $g=(g_{k\ell})$, then define $|g|=\mathrm{det}(g)$ and let $(g^{k\ell}(\cdot))$ be the inverse matrix of $(g_{k\ell}(\cdot))$.

For all $x\in M$, if there is no confusion, the inner product and the norm associated with the metric $g$ will be denoted respecively by $\langle \cdot|\cdot\rangle $ and $|\cdot|$:
\begin{align*}
&\langle X|Y\rangle=g_{k\ell}X^kY^\ell,\quad X=X^k\frac{\partial}{\partial x_k}\in T_xM,\quad Y=Y^k\frac{\partial}{\partial x_k}\in T_xM,
\\
&|X|^2=\sqrt{\langle X|X\rangle},\quad X=X^k\frac{\partial}{\partial x_k}\in T_xM.
\end{align*}
For $x\in M$, we will use henceforth the usual sharp isomorphism which is given as follows
\[
T_x^\ast M\rightarrow T_xM: A=A_kdx^k\mapsto A^\#=A^k\frac{\partial}{\partial x_k},\quad A^k=g^{k\ell}A_\ell.
\]
We can then define an inner product and a norm on $T_x^\ast M$, denoted again respectively by $\langle \cdot|\cdot\rangle $ and $|\cdot|$, according to the formulas
\begin{align*}
&\langle A|B\rangle =g^{k\ell}A_kB_\ell=g_{k\ell}A^kB^\ell=\langle A^\#|B^\#\rangle,
\\
&\hskip 5cm A=A_kdx^k\in T_x^\ast M,\quad B=B_kdx^k\in T_x^\ast M,
\\
&|A|= \sqrt{\langle A|A\rangle}=\langle A^\#|A^\#\rangle=|A^\#| ,\quad A=A_kdx^k\in T_x^\ast M.
\end{align*}

For convenience, we recall that the Laplace-Beltrami operator is given in local coordinates as follows.
\[
\Delta w:=|g|^{-\frac{1}{2}}\partial_k(|g|^{\frac{1}{2}}g^{k\ell}\partial_\ell w).
\]

In what follows, to $a=(a_1,\ldots ,a_n)\in L^n(M,\mathbb{R}^n)$ we associate the $1$-form $A=a_jdx^j$. The magnetic Laplace-Beltrami operator, corresponding to $a$ or $A$ is given by
\[
\Delta_A:= |g|^{-\frac{1}{2}}(\partial_k+ia_k)(|g|^{\frac{1}{2}}g^{k\ell}(\partial_\ell +ia_\ell)w).
\]
When $a\in W^{1,m}(M,\mathbb{R}^n)$ and $A=a_jdx^j$, the co-differential of $A$ is given by  
\[
\delta (A)=|g|^{-\frac{1}{2}}\partial_k(|g|^{\frac{1}{2}}g^{k\ell}a_\ell).
\]
We verify that 
\[
\Delta_Aw =\Delta +2i\langle A| dw \rangle +i\delta(A)-|A|^2.
\]
For $b=(a,V)\in W^{1,m}(M,\mathbb{R}^n)\times L^m(M,\mathbb{R})$, we use in the following the notations
\begin{align*}
&V_a:= i\delta(A)-|A|^2-V,
\\
&\Delta_bw:= \Delta_Aw-V=\Delta +2i\langle A| dw \rangle+V_aw,
\\
&d_aw:=dw+iwA.
\end{align*}

Let $d\mu=\sqrt{|g|}dx_1\ldots dx_n$ be the Riemannian measure. The norm of $L^r(M):=L^r(M,d\mu)$, $1\le r\le \infty$ will be denoted by $\|\cdot \|_{0,r}$.  We denote the usual scalar product on $L^2(M)$ by $(\cdot ,\cdot)$:
\[
(f,g)=\int_Mf\overline{g}d\mu.
\]
The natural norm on $H^1(M)$ is given by
\[
\|w\|_{1,2}=\left(\|w\|_{0,2}^2+\|dw\|_{0,2}^2\right)^{\frac{1}{2}}.
\]
Here
\[
\|dw\|_{0,2}^2=\int_M\langle dw|d\overline{w}\rangle d\mu.
\]
From Poincarr\'e's inequality,
\[
\varkappa:=\sup\{\|w\|_{0,2};\; w\in H_0^1(M),\; \|dw\|_{0,2}=1\}<\infty,
\]
where $H_0^1(M)$ is the closure of $C_0^\infty (\mathring{M})$ in $H^1(M)$. Also, as $H_0^1(M)$ is continuously embedded in $L^{p'}(M)$, we have
\[
\kappa:=\{\|w\|_{0,p'},\; w\in H_0^1(M);\; \|dw\|_{0,2}=1\}<\infty.
\]
Fix $0<\mathfrak{c}<\kappa^{-1}$ and let
\[
\mathcal{A}:=\{a=(a_1,\ldots ,a_n)\in L^n(M,\mathbb{R}^n);\; \|a\|_{0,n}\le \mathfrak{c}\}.
\]
As for \eqref{1}, we have
\begin{equation}\label{m1}
\mathfrak{c}_-\|dw\|_{0,2}\le \|d_aw\|_{0,2}\le \mathfrak{c}_+\|dw\|_{0,2},
\end{equation}
where $\mathfrak{c}_\pm=1\pm \mathfrak{c}\kappa$.

The other notations we will use are the same as those we use up to now, with $\partial_{\nu_a}$ is given in the present case as follows
\[
\partial_{\nu_a}w=(\partial_{\nu}+iA(\nu))w.
\]

We need to adapt some definitions of Part I. Let $N=(N,g)$ be a $C^\infty$-smooth connected compact manifold with boundary so that $M\subset \mathring{N}$. For $1\le r\le \infty$ and $k\ge 1$ an integer, we set
\[
W^{s,r}_\ast(N,\mathbb{K}^k)=\{f\in W^{s,r}(N,\mathbb{K}^k);\; \mathrm{supp}(f)\subset M\},\quad \mathbb{K}\in \{\mathbb{R},\mathbb{C}\}.
\]
This space is endowed with norm of $W^{s,r}(N,\mathbb{K}^k)$, denoted $\|\cdot\|_{s,r}$, and $W_\ast^{0,p}(N,\mathbb{K}^k)=L^r_\ast(N,\mathbb{K}^k)$.

Fix $V_0\in L_\ast^m(N,\mathbb{R})$ nonnegative and non identically equal to zero. Then define
\[
\mathcal{V}=\{ V\in L_\ast^m(N,\mathbb{R}),\; |V|\le V_0\}.
\]

For $b=(a,V)\in \mathcal{A}\times \mathcal{V}$, the spectral analysis of Part I still applies to the actual operator $-\Delta_b$. The sequence of eigenvalues and the corresponding orthonormal basis of eigenfunctions are denoted respectively by $(\lambda_k^b)$ and $(\phi_k^b)$. Also, set
\[
\psi_k^b:= \partial_{\nu_a}\phi_k^b,\quad k\ge 1.
\]
As in Part I, for $b_j=(a_j,V_j)\in \mathcal{A}\times \mathcal{V}$, $j=1,2$, we set
\begin{align*}
&\delta_+(b_1,b_2)=\sum_{k\ge 1}k^{-\frac{2}{n}}\left[|\lambda_k^{b_1}-\lambda_k^{b_2}|+\|\phi_k^{b_1}-\phi_k^{b_2}\|_{2,q}\right],
\\
&\delta(b_1,b_2)=\sum_{k\ge 1}k^{-\frac{2}{n}}\left[|\lambda_k^{b_1}-\lambda_k^{b_2}|+\|\psi_k^{b_1}-\psi_k^{b_2}\|_{0,q}\right].
\end{align*}

We need also some other definitions similar to those used in Part I. Let
\[
\mathcal{A}_+:=\{a\in W_\ast^{1,\infty}(N,\mathbb{R}^n);\; a_{|M}\in \mathcal{A},\|a\|_{2,\infty}\le \tilde{\mathfrak{c}}\}.
\]
and define $\mathscr{B}$ as the set of couples $(b_1,b_2)=((a_1,V_1),(a_2,V_2))\in [\mathcal{A}_+\times \mathcal{V}]^2$ satisfying
\[
|A_1-A_2|\le \mathbf{a},\quad |i\delta(A_1-A_2)-|A_1|^2+|A_2|^2+ V_1-V_2|\le W_0,
\]
where $\mathbf{a}\in L_\ast^\infty (N,\mathbb{R})$ and $W_0\in L^n_\ast(N,\mathbb{R})$ are nonnegative and non identically equal to zero. Here $A_j=a_{j,k}dx^k$, $j=1,2$.

With these new definitions, we verify that all the  results of Part I until Proposition \ref{pro2} are still valid with constants depending also on $g$.

\subsection{Main results}

Assume that $M$ and $N$ are simple. By $K\in \{M,N\}$ is simple we mean that $\partial K$ is strictly convex and, for all $x\in K$, $\mathrm{exp}_x: \mathrm{exp}_x^{-1}(K)\mapsto K$ is a diffeomorphism.

It should be noted that the notion of a simple manifold is stable under "small perturbation", meaning that every simple manifold is contained in the interior of another simple manifold.
 
Define
\begin{align*}
&\mathcal{A}_0=\{a\in W_\ast^{2,\infty}(N,\mathbb{R}^n);\; a_{|M}\in \mathcal{A},\; \|a\|_{2,\infty}\le \tilde{\mathfrak{c}}\},
\\
&\mathcal{V}_0=\{(V_1,V_2)\in \mathcal{V}\times \mathcal{V};\; V_1-V_2\in H^2(N),\;  \|V_1-V_2\|_{2,2}\le \tilde{\mathfrak{c}} \}.
\end{align*}

\begin{theorem}\label{thmme1}
$(1)$ Let $b_j=(a,V_j)\in \mathcal{A}_0\times L_\ast^m(N,\mathbb{R})$, $j=1,2$.  If
\[
\lambda_k^1=\lambda_k^2,\quad \phi_k^1=\phi_k^2,\quad k\ge 1,
\]
then $V_1=V_2$.
\\
$(2)$ Let $b_j=(a,V_j)\in \mathcal{A}_0\times \mathcal{V}_0$, $j=1,2$ such that $\delta_+(b_1,b_2)<\infty$. Then
\[
\|V\|_{0,2}\le \mathbf{c} \delta(b_1,b_2)^{\beta_0},
\]
where 
\[
\beta_0=\frac{1}{2(2\sigma +2n+9)}
\]
and $\mathbf{c}=\mathbf{c}(n,N,M,a,g,V_0,\mathfrak{c},\tilde{\mathfrak{c}})>0$ is a constant.
\end{theorem}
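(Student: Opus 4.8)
Set $\phi_k:=\phi_k^1=\phi_k^2$ and $\lambda_k:=\lambda_k^1=\lambda_k^2$. From the eigenvalue equation \eqref{9} we get $-\Delta_{b_1}\phi_k=\lambda_k\phi_k=-\Delta_{b_2}\phi_k$, hence $(\Delta_{b_1}-\Delta_{b_2})\phi_k=0$. Since $a$ is common to $b_1$ and $b_2$, one has $\Delta_{b_1}-\Delta_{b_2}=-(V_1-V_2)$, so $(V_1-V_2)\phi_k=0$ in $L^1(M)$ for every $k$ (using $V_1-V_2\in L^m$ and $\phi_k\in L^{p'}$). As $(\phi_k)$ is an orthonormal basis of $L^2(M)$, the set $\{V_1\ne V_2\}$ must have measure zero, i.e.\ $V_1=V_2$. (More robustly, and in the spirit of part (2): \eqref{40} reduces the hypotheses to equality of the DtN families $\Lambda^{b_1}(\lambda)=\Lambda^{b_2}(\lambda)$, and one then invokes the injectivity of the geodesic X-ray transform on the simple manifold $M$, which is the $\delta(b_1,b_2)=0$ instance of part (2) below.)

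\textbf{Part (2): the two reductions.} Write $V:=V_1-V_2$. The first reduction is from BSD to DtN maps: by the manifold version of Proposition~\ref{pro2} (valid here since all results of Part~I through Proposition~\ref{pro2} carry over), for $\lambda=(\tau+i)^2\in\Sigma_\ast$ and $\varphi\in\mathcal B$,
\[
\|\Lambda^{b_1}(\lambda)(\varphi)-\Lambda^{b_2}(\lambda)(\varphi)\|_{0,q}\le \mathbf c\bigl(\tau^{-3}+\tau^{\sigma+n+2}\delta(b_1,b_2)\bigr)\|\varphi\|.
\]
The second reduction is from DtN maps to a geodesic X-ray transform. Because $M$ is simple, the eikonal equation $|d\rho|_g=1$ has a global smooth solution on $M$ (e.g.\ a boundary distance function), and for each unit-speed geodesic one constructs WKB-type solutions $\varphi_1,\varphi_2$ of $(-\Delta_{b_j}-\lambda)\varphi_j=g_j\varphi_j$, $j=1,2$, of the form $e^{i\tau\rho_j}$ times an amplitude solving the transport equation; crucially, the leading transport equation does not see $V$ and its modulus squared is independent of the magnetic potential, so — $a$ being common to $b_1$ and $b_2$ — the product $\varphi_1\overline{\varphi_2}$ concentrates, as $\tau\to\infty$, along the geodesic with one and the same smooth positive weight depending only on $g$.

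\textbf{Part (2): execution.} I would substitute these solutions into the manifold analogue of the integral identity \eqref{1.13}, taken with $a_1=a_2=a$ and gauge factor $\alpha=1$, so that the only non-resolvent term on the right is $-\int_M V\,\varphi_1\overline{\varphi_2}\,d\mu$; the resolvent remainders $\int_M R^{b_j}(\lambda)(g_j\varphi_1)\,\overline{(\cdots)\varphi_2}\,d\mu$ are bounded by Lemma~\ref{lem4} (with $\theta=\tfrac{1}{8}$) together with $\|g_j\|_{0,\infty}\le \mathbf c(\epsilon^{-1}+\tau\epsilon)$ and the choice $\epsilon=\tau^{-1/3}$, exactly as in the proofs of Theorems~\ref{thm1} and \ref{thm2}. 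Combining this with the DtN estimate above and with the growth $\|\varphi_j\|_{2,q'}\le\mathbf c\tau^{2}$, and using $\|V_1-V_2\|_{2,2}\le\tilde{\mathfrak c}$ to control the error of replacing $\varphi_1\overline{\varphi_2}$ by its leading profile, one obtains, for every geodesic $\gamma$,
\[
\bigl|\mathcal IV(\gamma)\bigr|\le \mathbf c\bigl(\tau^{-1/2}+\tau^{2(\sigma+n+4)}\delta(b_1,b_2)\bigr),\qquad \tau\ge\tau_\ast,
\]
where $\mathcal I$ is the (fixed-weight) geodesic X-ray transform on $M$. Since $M$ and $N$ are simple, $\mathcal I$ is stably invertible; squaring, integrating over the geodesics, and using the $H^2$-bound on $V$ once more to absorb the high-frequency tail leads to $\|V\|_{0,2}^2\le \mathbf c\bigl(\tau^{-1}+\tau^{2(2\sigma+2n+8)}\delta(b_1,b_2)^2\bigr)$, and minimizing in $\tau$ yields $\|V\|_{0,2}\le\mathbf c\,\delta(b_1,b_2)^{\beta_0}$ with $\beta_0=\frac{1}{2(2\sigma+2n+9)}$. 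Part (1) is then the specialization $\delta(b_1,b_2)=0$: here $\mathcal IV=0$, so $V=0$ by injectivity.

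\textbf{Main obstacle.} The crux is the geometric-optics step on the manifold and its uniform-in-$\tau$ control: one must build $\varphi_1,\varphi_2$ whose product concentrates as $\tau\to\infty$ on a single geodesic with an \emph{explicit}, $V$-free and $a$-free positive weight — this is where $a_1=a_2$ is essential and why no magnetic gauge intervenes — while keeping the amplitudes $g_j$, and hence the resolvent remainders, from growing faster than the competing term (the $\epsilon^{-1}$ versus $\tau\epsilon$ balance at $\epsilon=\tau^{-1/3}$). Identifying the limit precisely as a geodesic X-ray transform admitting a stability estimate on simple manifolds, and then carrying every power of $\tau$ through that estimate so as to land on exactly $\beta_0=\frac{1}{2(2\sigma+2n+9)}$, is the main bookkeeping burden; once the identity and the X-ray stability are in place, the uniqueness in (1) is immediate.
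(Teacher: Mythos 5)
Your part (1) is correct and in fact more elementary than the paper's treatment: since $a$ is common to $b_1,b_2$, subtracting the two eigen-equations gives $(V_1-V_2)\phi_k=0$ in $L^1(M)$ for every $k$, and completeness of $(\phi_k)$ forces $|\{V_1\ne V_2\}|=0$. The paper instead obtains (1) as the $\delta(b_1,b_2)=0$ case of its quantitative inequality \eqref{u14} combined with injectivity of $\mathbf{I}_0$.

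For part (2) your architecture (Proposition \ref{pro2}, special solutions in the integral identity, stability of the geodesic ray transform, minimization in $\tau$) is the paper's, but the central quantitative step is not carried out correctly. The paper's solutions $\varphi_+=e^{i\sqrt{\lambda}\psi}\alpha\vartheta$ and $\varphi_-=e^{i\overline{\sqrt{\lambda}}\psi}\alpha\vartheta$ have \emph{exactly cancelling} phases: $\varphi_1\overline{\varphi_2}=\alpha_1\overline{\alpha_2}\vartheta_1\overline{\vartheta_2}$ is $\tau$-independent, so nothing "concentrates along a geodesic as $\tau\to\infty$"; the large parameter only kills the resolvent remainders. What the identity \eqref{u10} yields, via Santal\'o's formula, is a \emph{tested} bound \eqref{u14}: the pairing of $\mathbf{I}_0(V)$ against an arbitrary density $\eta$ on $\partial_+SN$ is controlled by $(\tau^{-\frac{1}{2}}+\tau^{\sigma+n+4}\delta(b_1,b_2))\|\eta\|_{3,2}$, the Sobolev norms of $\eta$ coming from the remainder estimates \eqref{u12}--\eqref{u13}. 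Your claimed uniform per-geodesic bound $|\mathcal{I}V(\gamma)|\le \mathbf{c}(\tau^{-1/2}+\cdots)$ does not follow from this: localizing to a single geodesic means concentrating $\eta$, which blows up $\|\eta\|_{2,2},\|\eta\|_{3,2}$ and destroys the estimate, and the tools you import (Lemma \ref{lem4} with $\theta=\tfrac18$, the mollification $\epsilon=\tau^{-1/3}$) are Part-I Euclidean devices that are neither needed here (in Part II $a\in W^{2,\infty}$, the amplitude $\vartheta=e^{i\tilde\sigma}$ is exact, and the paper uses \eqref{27} with $\theta=\tfrac14$ to get the $\tau^{-1/2}$ remainder) nor capable of giving pointwise-in-$\gamma$ control.

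The conversion of the tested bound into $\|V\|_{0,2}$ --- the actual heart of (2) --- is done in the paper by the dual choice $\eta=\mathbf{I}_0\mathbf{N}_0(V)\,|\langle\theta|\nu(\tilde x)\rangle|$ (admissible because $V\in H^2(N)$ with $\mathrm{supp}(V)\subset M$, so $\mathbf{N}_0(V)\in H^3$), which turns the left side into $\|\mathbf{N}_0(V)\|_{0,2}^2$, followed by the normal-operator estimates \eqref{nor1}--\eqref{nor2} and the a priori bound $\|V\|_{2,2}\le\tilde{\mathfrak c}$; your "stably invertible, absorb the high-frequency tail" gestures at a Fourier truncation that has no counterpart in the manifold proof. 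Finally, the exponent bookkeeping is internally inconsistent: minimizing your intermediate inequality $\|V\|_{0,2}^2\le\mathbf{c}(\tau^{-1}+\tau^{2(2\sigma+2n+8)}\delta^2)$ gives the exponent $\frac{1}{4\sigma+4n+17}$, not $\beta_0=\frac{1}{2(2\sigma+2n+9)}$; the paper reaches $\beta_0$ from the \emph{unsquared} inequality \eqref{u17}, $\mathbf{c}\|V\|_{0,2}^2\le\tau^{-1/2}+\tau^{\sigma+n+4}\delta(b_1,b_2)$, and only then optimizes in $\tau$.
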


Let us also recall that for all integer $k\ge 0$ and $A\in H^k(N,T^\ast N)$, there exists a uniquely determined $A^s\in H^k(N,T^\ast N)$ and $F\in H^{k+1}(N)$ such that
\begin{equation}\label{dec1}
A=A^s+dF, \quad \delta A^s=0,\quad F_{|\partial N}=0.
\end{equation}
$A^s$ and $dF$ are called respectively the solenoidal and potential parts of $A$.

\begin{theorem}\label{thmme2}
Let $b_j=(a_j,V_j)\in \mathcal{A}_0\times \mathcal{V}$ and $A_j=a_{j,k}dx^k$, $j=1,2$, be such that $a_1=a_2$ on $\Gamma$. If $\delta_+(b_1,b_2)<\infty$, then
 \[
 \|(A_1-A_2)^s\|_{0,2}\le \mathbf{c}\delta(b_1,b_2)^{\beta_1},
 \]
 where
 \[
 \beta_1=\frac{1}{2(\sigma+n+3)}
 \]
and $\mathbf{c}=\mathbf{c}(n,M,N, g,V_0,\mathfrak{c},\tilde{\mathfrak{c}})>0$ is a constant.
\end{theorem}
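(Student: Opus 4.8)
The plan is to follow the anisotropic analogue of the proof of Theorem~\ref{thm2}, with the plane waves of Part~I replaced by Gaussian beam quasimodes concentrated along geodesics (in the spirit of \cite{Ki,BCDKS,LQSY}) and the Fourier inversion step replaced by a stability estimate for the geodesic ray transform on solenoidal $1$-forms. First I would reproduce on $(M,g)$ the computations of Section~4.1: applying Green's formula for $\Delta_{b_1}$ and $\Delta_{b_2}$, and using the hypothesis $a_1=a_2$ on $\Gamma$ (which kills the boundary term carrying $\langle A_1-A_2,\nu\rangle$), one gets, for the test pairs $(\varphi_1,\alpha\varphi_2)$ built below, the analogue of \eqref{1.13}:
\[
\int_\Gamma[\Lambda^{b_1}(\lambda)-\Lambda^{b_2}(\lambda)](\varphi_1)\overline{\alpha\varphi_2}\,d\sigma=-\int_M\mathbf{d}\,\varphi_1\overline{\varphi_2}\,d\mu-\int_M R^{b_1}(\lambda)(g_1\varphi_1)\overline{f\varphi_2}\,d\mu+\int_M R^{b_2}(\lambda)(\hat g\varphi_1)\overline{\ell\varphi_2}\,d\mu,
\]
where $\mathbf d=[-2\sqrt\lambda\,\langle A_1-A_2,\,\cdot\,\rangle+(V_{a_1}-V_{a_2})]\overline\alpha$ and $g_j,\hat g,\ell,f$ are the zeroth order error coefficients produced when $\Delta_{b_j}+\lambda$ (resp.\ its adjoint) hits $\varphi_1$ (resp.\ $\alpha\varphi_2$).

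Then, for $\lambda=(\tau+i)^2$ with $\tau$ large and a maximal unit-speed geodesic $\gamma$ of $M$ (extended inside the larger simple manifold $N$), I would construct $\varphi_1,\varphi_2$ concentrated on $\gamma$, of the form $\varphi_j=\tau^{n/4}e^{i\tau\theta_j}(\mathbf b_j+\tau^{-1}(\cdots))$ with $\mathrm{Im}\,\theta_j\gtrsim\mathrm{dist}(\cdot,\gamma)^2$, the amplitudes $\mathbf b_j$ solving the transport equation along $\gamma$ so as to absorb the tangential part of $A_j$ into the phase, exactly as in \cite{Ki,BCDKS,LQSY}; the companion gauge factor $\alpha$ (the manifold analogue of the $\alpha$ used in the proof of Theorem~\ref{thm2}) is chosen so that the leading part of $\int_M\frac{\mathbf d}{\sqrt\lambda}\varphi_1\overline{\varphi_2}\,d\mu$ becomes a prescribed component of the geodesic ray transform $I_1(A_1-A_2)(\gamma)=\int\langle(A_1-A_2)_{\gamma(t)},\dot\gamma(t)\rangle\,dt$. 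After balancing a mollification parameter $\epsilon$ against $\tau$ (as in the Part~I bounds of the type $\|g_j\|_{0,m}\le\mathbf c(\epsilon^{-1}+\tau\epsilon)$), the term $V_{a_1}-V_{a_2}$ — which appears only divided by $\sqrt\lambda$ — and the two resolvent integrals — controlled by the manifold versions of Lemmas~\ref{lem4}, \ref{lem5}, \ref{lem6}, in particular \eqref{27} and \eqref{31} — are at most $\mathbf c\,\tau^{-\kappa_0}$ for some $\kappa_0>0$, while Proposition~\ref{pro2} bounds the left-hand side by $\mathbf c^\ast(\tau^{-3}+\tau^{\sigma+n+2}\delta(b_1,b_2))\|\varphi_1\|\,\|\alpha\varphi_2\|$ with $\|\varphi_j\|$ polynomial in $\tau$. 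Collecting everything yields, uniformly over $\gamma$,
\[
|I_1(A_1-A_2)(\gamma)|\le\mathbf c\big(\tau^{-\kappa_0}+\tau^{\sigma+n+3}\delta(b_1,b_2)\big).
\]

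Finally, since $a_1,a_2\in\mathcal A_0\subset W^{2,\infty}_\ast(N,\mathbb R^n)$, the $1$-form $A_1-A_2$, hence its solenoidal part $(A_1-A_2)^s$ from \eqref{dec1}, is bounded in $H^2(N)$; combining the last bound (and, if needed, its derivative in the endpoint parameter of $\gamma$, in order to control $\|I_1(A_1-A_2)\|$ in the norm required by the stability estimate) with the Stefanov--Uhlmann solenoidal stability estimate for the geodesic ray transform on $1$-forms on simple manifolds, used as in \cite{LQSY} (see also \cite{CY,Ch2}), and interpolating against the a priori $H^2$ bound, one obtains $\|(A_1-A_2)^s\|_{0,2}\le\mathbf c(\tau^{-\kappa_1}+\tau^{\sigma+n+3}\delta(b_1,b_2))$ for some $\kappa_1>0$. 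A standard minimization in $\tau$ then gives $\|(A_1-A_2)^s\|_{0,2}\le\mathbf c\,\delta(b_1,b_2)^{\beta_1}$ with $\beta_1=\frac1{2(\sigma+n+3)}$. The hard part will be the Gaussian beam construction with coefficients only in $W^{2,\infty}$ (and $V_j\in L^m$): one must estimate the error coefficients $g_j,\hat g,\ell,f$ in the right $L^r$ norms and keep precise track of the powers of $\tau$ and of the regularization parameter so that the two competing errors balance to exactly this exponent; identifying the exact norms in the ray transform stability estimate and the interpolation exponent is the other delicate point.
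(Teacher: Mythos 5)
Your outline reproduces the paper's overall skeleton (Green's identity plus the BSD-to-DtN bound of Proposition \ref{pro2}, quasimodes whose phases absorb the tangential part of the magnetic potential, a geodesic ray transform step, and a final optimization in $\tau$), but the quantitative core of Theorem \ref{thmme2} --- the exponent $\beta_1=\frac{1}{2(\sigma+n+3)}$ --- is asserted rather than derived, and the route you describe would not produce it. The paper does not work with beams concentrated on a single geodesic: it uses the polar-coordinate WKB solutions $e^{i\sqrt{\lambda}\psi}\rho^{-1/4}\eta\,e^{i\tilde\sigma}$ centered at a boundary point, so that the main term is a pairing of the (exponentiated) ray transform against an \emph{arbitrary} density $\eta$ on $\partial_+SN$ (see \eqref{u24}--\eqref{u26}). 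The decisive step is then the choice $\eta=\mathbf{I}_1\mathbf{N}_1(A)(\tilde x,\theta)\,|\langle\theta|\nu(\tilde x)\rangle|$, which by duality makes the left-hand side \emph{quadratic} in $A$, and \eqref{nor4}--\eqref{nor5} convert it into $\|A^s\|_{0,2}^2\le\mathbf{c}\left(\tau^{-1}+\tau^{\sigma+n+2}\delta(b_1,b_2)\right)$; the factor $2$ in $\beta_1$ comes precisely from taking a square root after minimizing in $\tau$. Your plan instead yields a bound linear in $A$, $|I_1(A_1-A_2)(\gamma)|\le\mathbf{c}(\tau^{-\kappa_0}+\tau^{\sigma+n+3}\delta)$ with $\kappa_0$ unspecified, followed by a black-box ``Stefanov--Uhlmann'' stability estimate and an interpolation of unspecified loss; minimizing $\tau^{-\kappa_1}+\tau^{\sigma+n+3}\delta$ gives the exponent $\kappa_1/(\sigma+n+3+\kappa_1)$, which equals the stated $\beta_1$ only for one particular value of $\kappa_1$ that you never compute, and your $\tau^{n/4}$-normalized Gaussian beams inject additional powers of $\tau$ through the norms $\|\varphi_1\|_{\mathcal{B}}$ and $\|\alpha\varphi_2\|_{0,q'}$ entering Proposition \ref{pro2}, which you do not track. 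So as written the stated exponent does not follow.

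There is a second concrete gap in the identification of the leading term. Because the phases absorb the tangential components of $A_j$ through $e^{i\tilde\sigma_j}$, the factor $\langle A_1-A_2|d\psi\rangle$ gets multiplied by $\vartheta_1\overline{\vartheta_2}=e^{i(\tilde\sigma_1-\tilde\sigma_2)}$, and its integral along each geodesic is $i\bigl[e^{i\mathbf{I}_1(A_1-A_2)(\tilde x,\theta)}-1\bigr]$ (this is \eqref{u19}), not $\mathbf{I}_1(A_1-A_2)$ itself; the paper needs an additional linearization step (the analogue of \cite[Lemma 4.2]{LQSY}) before any injectivity or stability of $\mathbf{I}_1$ can be invoked. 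Your claim that a suitable choice of the gauge factor $\alpha$ makes the leading part ``a prescribed component of'' the ray transform transplants the Euclidean device of Theorem \ref{thm2} (Kian's computation producing the curl directly), which has no direct analogue in the paper's anisotropic argument and is not substantiated in your sketch. Two smaller points: no mollification parameter $\epsilon$ is needed in Part II, since $a_j\in W^{2,\infty}_\ast(N,\mathbb{R}^n)$ already gives the regularity required for the eikonal/transport construction; and once the normal-operator pairing is used, the discussion of controlling derivatives of $I_1(A_1-A_2)$ in the endpoint parameters becomes unnecessary.
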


As in the previous section, we give a consequence of Theorems \ref{thmme1} and \ref{thmme2} concerning the uniqueness of the determination of $b=(A,V)$ from the corresponding BSD.

\begin{corollary}
Let $b_j=(a_j,V_j)\in \mathcal{A}_0\times L_\ast^m(N,\mathbb{R})$ and $A_j=a_{j,k}dx^k$, $j=1,2$, be such that $\mathrm{supp}(a_1-a_2)\subset M$. If
\[
\lambda_k^{b_1}=\lambda_k^{b_2},\quad \psi_k^{b_1}=\psi_k^{b_2},\quad k\ge 1,
\]
and 
\[
\sum_{k\ge 1}k^{-\frac{2}{n}}\|\phi_k^{b_1}-\phi_k^{b_2}\|_{2,q}<\infty,
\]
then $A^s_1=A^s_2$ and $V_1=V_2$.
\end{corollary}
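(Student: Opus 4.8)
The plan is to mimic the two–step argument of Corollary~\ref{coroll}: first recover the solenoidal part of the magnetic potential from Theorem~\ref{thmme2}, then gauge away the remaining exact part of $A_1-A_2$ and apply Theorem~\ref{thmme1} to the electric potential. To begin, I would observe that the hypotheses $\lambda_k^{b_1}=\lambda_k^{b_2}$ and $\psi_k^{b_1}=\psi_k^{b_2}$ force $\delta(b_1,b_2)=0$, while $\sum_{k\ge 1}k^{-2/n}\|\phi_k^{b_1}-\phi_k^{b_2}\|_{2,q}<\infty$ gives $\delta_+(b_1,b_2)<\infty$. Choosing $V_0=\max(|V_1|,|V_2|)$ in the definition of $\mathcal V$ (this only affects the constants, hence is harmless for a uniqueness statement) and using that $\mathrm{supp}(a_1-a_2)\subset M$ forces $a_1=a_2$ on $\Gamma$, the pair $(b_1,b_2)$ satisfies the hypotheses of Theorem~\ref{thmme2}, so $\|(A_1-A_2)^s\|_{0,2}\le \mathbf c\,\delta(b_1,b_2)^{\beta_1}=0$. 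By the uniqueness in the decomposition~\eqref{dec1} this is exactly $A_1^s=A_2^s$, the first assertion.

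For the electric potential I would exploit $A_1^s=A_2^s$: by~\eqref{dec1}, $A_1-A_2=dG$ for some $G$ with $G_{|\partial N}=0$ and $dG\in W_\ast^{2,\infty}(N,\mathbb{R}^n)$. Since $\mathrm{supp}(A_1-A_2)\subset M$, $dG$ vanishes on $N\setminus M$, and as $N\setminus M$ is connected and meets $\partial N$ (both $M$ and $N$ being simple), $G\equiv 0$ on $N\setminus M$, so $G$ and $dG$ vanish near $\Gamma$. The gauge identity $\Delta_{A_2}(e^{iG}\,\cdot)=e^{iG}\Delta_{A_1}$ (valid because $A_1=A_2+dG$) then shows that, with $\tilde b_1:=(a_2,V_1)$, the functions $e^{iG}\phi_k^{b_1}$ form an orthonormal basis of $L^2(M)$ consisting of Dirichlet eigenfunctions of $-\Delta_{\tilde b_1}$ with eigenvalues $\lambda_k^{b_1}$; moreover, since $G$ and $dG$ vanish on $\Gamma$ and $a_1=a_2$ there, the Neumann data are preserved: $\psi_k^{\tilde b_1}=\partial_{\nu_{a_2}}(e^{iG}\phi_k^{b_1})=\partial_{\nu_{a_1}}\phi_k^{b_1}=\psi_k^{b_1}=\psi_k^{b_2}$.

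At this stage $\tilde b_1=(a_2,V_1)$ and $b_2=(a_2,V_2)$ have a common magnetic potential $a_2$, the same eigenvalues, and the same Neumann data. I would then finish by running the proof of Theorem~\ref{thmme1} for this pair — equivalently, carrying out the modification of \cite[Theorem~1.1]{Ki} alluded to in Corollary~\ref{coroll} — which is a Borg–Levinson argument using only the equality of the eigenvalues and of the Neumann traces (the equality of the eigenfunctions in part~$(1)$ being only a convenient way of phrasing a sufficient hypothesis, which implies equality of the Neumann traces when the magnetic potential is common), and which therefore yields $V_1=V_2$.

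The hard part will be this last gauge step. One cannot simply quote Theorem~\ref{thmme1} for $(\tilde b_1,b_2)$ via the quantity $\delta_+$, because replacing $\phi_k^{b_1}$ by $e^{iG}\phi_k^{b_1}$ multiplies the eigenfunctions by the fixed nontrivial factor $e^{iG}$ and in general destroys the summability $\delta_+(\tilde b_1,b_2)<\infty$. The way around this is to argue at the level of the DtN maps: the equality of the boundary spectral data of $b_1$ and $b_2$ together with $\delta_+(b_1,b_2)<\infty$ already forces $\Lambda^{b_1}(\lambda)=\Lambda^{b_2}(\lambda)$ — the difference $\Lambda^{b_1}(\lambda)-\Lambda^{b_2}(\lambda)$ extends to an entire function of $\lambda$ (the poles at the $\lambda_k^{b_j}$ cancel since the residues, governed by the $\psi_k^{b_j}$, coincide), of polynomial growth by the resolvent estimates, hence a polynomial, which by Proposition~\ref{pro2} with $\delta(b_1,b_2)=0$ decays along $\Sigma_\ast$ and is therefore identically zero — and the gauge transports this to $\Lambda^{\tilde b_1}(\lambda)=\Lambda^{b_2}(\lambda)$, from which the electric–potential reconstruction of Theorem~\ref{thmme1} proceeds using only the common magnetic potential $a_2$ and the vanishing of the DtN difference.
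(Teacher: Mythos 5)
Your proposal is correct and follows essentially the same route as the paper: the first assertion is Theorem \ref{thmme2} applied with $\delta(b_1,b_2)=0$ and $\delta_+(b_1,b_2)<\infty$, and the second is the gauge-transformation reduction to a common magnetic potential followed by the machinery of Theorem \ref{thmme1}, which is exactly what the paper compresses into the citation of the last part of the proof of \cite[Theorem 2.2]{BCDKS}. Your extra care about $\delta_+$ not being gauge invariant, resolved by working at the level of the DtN maps (gauge invariance of $\Lambda^{b}(\lambda)$ plus the decay from Proposition \ref{pro2} with $\delta(b_1,b_2)=0$ along $\Sigma_\ast$, which already suffices without the Liouville detour), is precisely the content hidden in that reference, so the two arguments coincide in substance.
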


\begin{proof}
First, $A_1^s=A_2^s$ follows from Theorem \ref{thmme2}. We can then proceed as in the last part of the proof of \cite[Theorem 2.2]{BCDKS} to obtain from Theorem \ref{thmme1} that $V_1=V_2$.
\end{proof}

\section{Preliminaries}

In the remainder of this text, $M$ and $N$ are assumed to be simple with $M\subset \mathring{N}$. In the coming two subsections, we recall some results borrowed from \cite{BCDKS}. For further details, we refer the reader to \cite{BCDKS} and the references therein.

\subsection{Geodesic ray transform}

For all $x\in N$, recall that the exponential map $\mathrm{exp}_x$ is defined as folows
\[
\mathrm{exp}_x: v\in T_xN\mapsto \gamma_{x,\theta}(|v|)\in N,\quad \theta =\frac{v}{|v|},
\]
where $\gamma_{x,\theta}$ is the unique geodesic satisfying the initial conditions $\gamma_{x,\theta}(0)=x$ and $\dot{\gamma}_{x,\theta}(0)=v$.

In the following, we use the notations 
\begin{align*}
&SN:=\{ (x,\theta)\in TN;\; |\theta|=1\},
\\
&\partial SN:=\{(x,\theta)\in SN,\; x\in \partial N\},
\\
&\partial_\pm SN:=\{ (x,\theta)\in SN;\; x\in \partial N,\; \pm \langle \theta|\nu(x)\rangle<0\},
\end{align*}
where $\nu$ denotes the unit outer normal vector field on $\partial N$.

As $N$ is simple, for all $(x,\theta)\in SN$, $\mathrm{exp}_x$ is defined in a maximal finite interval $[\ell_-(x,\theta),\ell_+(x,\theta)]$ so that $\ell_\pm(x,\theta)\in \partial N$.

Also, set for $ x\in N$
\begin{align*}
&S_xN:=\{\theta\in T_xN;\; |\theta|=1\},
\\
&S^+_xN:=\{\theta \in S_xN;\; \langle \theta|\nu(x)\rangle >0\},
\end{align*}
and let $d\omega_x(\theta)$ denotes  the Riemannian mesure on $S_xN$, which induces on $SN$ and $\partial SN$ respectively the Riemannian measures  $d\mu(x) d\omega_x(\theta)$ and $ds(x) d\omega_x(\theta)$.

The following Santal\'o's formula will be useful hereinafter
\begin{align*}
&\int_{SN}F(x,\theta)d\mu(x)d\omega_x(\theta)
\\
&\hskip 2cm =\int_{\partial_+SN}\left(\int_0^{\ell_+(x,\theta)}F(\gamma_{x,\theta}(t),\dot{\gamma}_{x,\theta}(t))dt\right) |\langle\theta|\nu(x)\rangle| ds(x) d\omega_x(\theta).
\end{align*}

Recall that the geodesic ray transform for functions is given as follows.
\[
\mathbf{I}_0:C^\infty(N)\rightarrow C^\infty(\partial_+SN): f\mapsto \mathbf{I}_0(f)(x,\theta):=\int_0^{\ell_+(x,\theta)}f(\gamma_{x,\theta}(t))dt.
\]
For all integer $k\ge 0$, $\mathbf{I}_0$ extends to a bounded operator, denoted again by $\mathbf{I}_0$, from $H^k(N)$ to $H^k(\partial_+SN)$. Furthermore,  $\mathbf{I}_0:L^2(N)\mapsto L^2(\partial_+SN)$ is injective.

For simplify, $L^2(\partial_+SN,  |\langle\theta|\nu(x)\rangle| ds(x) d\omega_x(\theta))$ will be denoted by $L_w^2(\partial_+SN)$. It has been shown that the adjoint of $\mathbf{I}_0:L^2(N)\mapsto L_w^2(\partial_+SN)$ is given as follows.
\[
\mathbf{I}_0^\ast: L_w^2(\partial_+SN)\rightarrow L^2(N): g\mapsto \mathbf{I}_0^\ast(g)(x)=\int_{S_xN}\tilde{g}(x,\theta)d\omega_x(\theta),
\]
where $\tilde{g}$ is the extension of $g$ from $\partial_+SN$ to $SN$ which is constant on each orbit of the geodesic flow:
\[
\tilde{g}(x,\theta)=g(\gamma_{x,\theta}(\ell_-(x,\theta))).
\]
The operator $\mathbf{N}_0:=\mathbf{I}_0^\ast\mathbf{I}_0: L^2(N)\mapsto L^2(N)$ is usually called the normal operator.

The following double inequality will be useful hereinafter:  there exists two constants $c_1>0$ and $c_2>0$ such that 
 \begin{equation}\label{nor1}
 c_1\|f\|_{0,2}\le \|\mathbf{N}_0(f)\|_{1,2}\le c_2\|f\|_{0,2},\quad f\in L^2(M),
 \end{equation}
 where $L^2(M)$ is viewed as a subspace of $L^2(N)$ via the embedding $f\in L^2(M)\mapsto \chi_Mf\in L^2(N)$, where $\chi_M$ stands for the characteristic function of $M$. Moreover, for all integer $k\ge 0$ and $\mathcal{O}$ an open subset of $N$, there exists a constant $c_k>0$ such that for all $f\in H^k(N)$ with $\mathrm{supp}(f)\subset \mathcal{O}$ we have
 \begin{equation}\label{nor2}
\|\mathbf{N}_0(f)\|_{k+1,2}\le c_k\|f\|_{k,2}.
\end{equation}

Next, we review the main properties of the geodesic ray transform for $1$-forms. For $A=a_jdx^j$ with $a=(a_1,\ldots ,a_n)\in C^\infty(N,\mathbb{R}^n)$, define
\[
\mathbf{I}_1(A)(x,\theta):=\int_0^{\ell_+(x,\theta)}a_j(\gamma_{x,\theta}(t))\dot{\gamma}^j_{x,\theta}(t)dt.
\]
That is $\mathbf{I}_1$ acts as an operator from $C^\infty(N,T^\ast N)$ to $C^\infty(\partial_+SN)$. For all integer $k\ge 0$, $\mathbf{I}_1$ has an extension, denoted again by $\mathbf{I}_1$, from $H^k(N,T^\ast N)$ to $H^k(\partial_+SN)$.

Let $A\in H^k(N,T^\ast N)$ and consider its decomposition given by \eqref{dec1}. Then we have $\mathbf{I}_1(dF)=0$ and if $\mathbf{I}_1(A^s)=0$ then $A^s=0$.

$\mathbf{I}^\ast_1:L_w^2(\partial_+SN) \mapsto L^2(N, T^\ast N)$, the adjoint of $\mathbf{I}_1:L^2(N, T^\ast N) \mapsto L_w^2(\partial_+SN)$, is given as follows
\[
[\mathbf{I}_1^\ast(g)(x)]_j=\int_{S_xN} \theta^j\tilde{g}(x,\theta)d\omega_x(\theta).
\]
Here $\tilde{g}$ is the extension of $g$ from $\partial_+SN$ to $SN$ which is constant on each orbit of the geodesic flow, that is
\[
\tilde{g}(x,\theta)=g(\gamma_{x,\theta}(\ell_-(x,\theta)),\dot{\gamma}_{x,\theta}(\ell_-(x,\theta))),\quad (x,\theta)\in SN.
\]
Define the normal operator $\mathbf{N}_1:=\mathbf{I}_1^\ast\mathbf{I}_1:L^2(N, T^\ast N)\rightarrow L^2(N, T^\ast N)$. As for 
$\mathbf{N}_0$, there exists two constants $c_1>0$ and $c_2>0$ such that
\begin{equation}\label{nor4}
c_1\|A^s\|_{0,2}\le \|\mathbf{N}_1(A)\|_{1,2} \le c_2 \|A^s\|_{0,2},\quad f\in L^2(M,T^\ast M),
\end{equation}
where $L^2(M,T^\ast M)$ is considered as a subspace of $L^2(N,T^\ast N)$. Finally, for all integer $k\ge 0$ and $\mathcal{O}$ an open subset of $N$, we find a constant $c>0$ such that for all $A\in H^k(N,T^\ast N)$ satisfying $\mathrm{supp}(A)\subset \mathcal{O}$ we have
\begin{equation}\label{nor5}
\|\mathbf{N}_1(A)\|_{k+1,2}\le c\|A^s\|_{k,2}.
\end{equation}

\subsection{Special solutions}

Fix $\tilde{x}\in \partial N$ arbitrarily and denote $(r,\theta)$ the polar normal coordinates with center $\tilde{x}$. If there is no confusion, for $x=\mathrm{exp}_{\tilde{x}}(r\theta)$, $r>0$ and $\theta\in S_{\tilde{x}}N$, we use indifferently use $w(x)$ or $w(r,\theta)$. With this convention,
\[
g(r,\theta)=dr^2+g_0(r,\theta).
\]

If $D$ denotes the geodesic distance, then $\psi(x)=D(x,\tilde{x})=r$ belongs to $C^\infty (M)$ and it is the solution of the eikonal equation
\begin{equation}\label{ee}
|d \psi|^2=g^{k\ell}\partial_\ell \psi \partial_k\psi=1.
\end{equation}

Set $\rho(r,\theta)=\mathrm{det}(g_0(r,\theta))$ and let $\eta=\eta(\tilde{x},\cdot) \in H^3(S^+_{\tilde{x}}N)$ and $\alpha =\rho^{-\frac{1}{4}}\eta$. Then $\alpha$ is the solution of the transport equation
\begin{equation}\label{tr1}
2\langle d\alpha|d\psi\rangle+ \Delta \psi \alpha=0.
\end{equation}

Recall that
\[
\mathcal{A}_0=\{a\in W_\ast^{2,\infty}(N,\mathbb{R}^n);\; a_{|M}\in \mathcal{A},\; \|a\|_{2,\infty}\le \tilde{\mathfrak{c}}\}.
\]

Let $A=a_jdx^j$ with $a\in \mathcal{A}_0$, $\sigma(r,\theta)=\langle \dot{\gamma}_{\tilde{x},\theta}(r)|A^\#(\gamma_{\tilde{x},\theta}(r))\rangle$ and 
\[
\tilde{\sigma}(r,\theta)=\int_0^{\ell_+(\tilde{x},\theta)} \sigma(r+t,\theta)dt.
\]
Then $\vartheta=e^{i\tilde{\sigma}}$ is the solution of the following transport equation.
\begin{equation}\label{tr2}
\langle d\vartheta|d\psi\rangle+i \langle A|d\psi\rangle \vartheta =0.
\end{equation}

For $\tau>1$ and $\lambda =(\tau+i)^2$, let 
\[
\varphi_+=\varphi_+(A):=e^{i\sqrt{\lambda}\psi}\alpha\vartheta.
\]

We verify that   
\begin{equation}\label{u1}
(\Delta+\lambda )\varphi_+=\Delta e^{i\sqrt{\lambda}\psi}\alpha\vartheta+2\langle d e^{i\sqrt{\lambda}\psi}|d(\alpha\vartheta)\rangle+e^{i\sqrt{\lambda}\psi}\Delta (\alpha\vartheta)+\lambda \varphi_+.
\end{equation}
Using \eqref{ee}, we obtain
\begin{align*}
\Delta e^{i\sqrt{\lambda}\psi}
&= [-\lambda|d\psi|^2+i\sqrt{\lambda}\Delta \psi]e^{i\sqrt{\lambda}\psi}
\\
&= -\lambda e^{i\sqrt{\lambda}\psi}+i\sqrt{\lambda}\Delta \psi e^{i\sqrt{\lambda}\psi}.
\end{align*}
Hence
\begin{equation}\label{u2}
\Delta e^{i\sqrt{\lambda}\psi}\alpha\vartheta+\lambda \varphi_+= i\sqrt{\lambda}\Delta \psi\varphi_+.
\end{equation}
Using \eqref{u2} in \eqref{u1} gives
\[
(\Delta+\lambda )\varphi_+=i\sqrt{\lambda}\Delta \psi\varphi_++2\langle d e^{i\sqrt{\lambda}\psi}|d (\alpha\vartheta)\rangle+e^{i\sqrt{\lambda}\psi}\Delta(\alpha\vartheta).
\]
This and
\[
2\langle d e^{i\sqrt{\lambda}\psi}|d (\alpha\vartheta)\rangle=2i\sqrt{\lambda}[\langle d \psi|d \alpha\rangle\vartheta+\langle d \psi|d \vartheta\rangle\alpha]e^{i\sqrt{\lambda}\psi}
\]
imply
\[
(\Delta+\lambda )\varphi_+=i\sqrt{\lambda}\Delta \psi\varphi_++2i\sqrt{\lambda}[\langle d \psi|d \alpha\rangle\vartheta+\langle d \psi|d \vartheta\rangle\alpha]e^{i\sqrt{\lambda}\psi} +e^{i\sqrt{\lambda}\psi}\Delta (\alpha \vartheta).
\]

This inequality, combined with \eqref{tr1}, yields
\begin{equation}\label{u3}
(\Delta+\lambda )\varphi_+=2i\sqrt{\lambda}\langle d \psi|d \vartheta\rangle\alpha e^{i\sqrt{\lambda}\psi}+e^{i\sqrt{\lambda}\psi}\Delta (\alpha \vartheta).
\end{equation}

On the other hand, we have
\begin{equation}\label{u4}
2i\langle A|d\varphi_+\rangle =-2\sqrt{\lambda}\langle A|d\psi\rangle \varphi_+ +2i\langle A|d (\alpha\vartheta)\rangle e^{i\sqrt{\lambda}\psi}.
\end{equation}

In light of \eqref{tr2}, putting together \eqref{u3} and \eqref{u4} we obtain
\begin{equation}\label{u5}
(\Delta_b+\lambda)\varphi_+= e^{i\sqrt{\lambda}\psi}f,
\end{equation}
where
\begin{equation}\label{u5.0}
f=\Delta (\alpha \vartheta)+2i\langle A|d (\alpha\vartheta)\rangle +V_a\alpha \vartheta.
\end{equation}

Also, define
\[
\varphi_-=\varphi_-(A):=e^{i\overline{\sqrt{\lambda}}\psi}\alpha\vartheta.
\]
Similarly to \eqref{u5}, we verify that
\begin{equation}\label{u5-}
(\Delta_b+\overline{\lambda})\varphi_-= e^{i\overline{\sqrt{\lambda}}\psi}f.
\end{equation}

\subsection{Integral identity}

We reuse the notations of the preceding subsection. For $j=1,2$, let $b_j=(a_j,V_j)\in \mathcal{A}_0\times\mathcal{V}$, $A_j=(a_j)_kdx^k$, $\varphi_1:= \varphi_+(A_1)$, $\varphi_2=\varphi_-(A_2)$ and $f=f_j$  when $b=b_j$, where $f$ is as in \eqref{u5.0}. Then we have
\begin{align*}
(\Delta_{b_1}-\Delta_{b_2})\varphi_2&=2i \langle A_1-A_2|d\varphi_2\rangle+(V_{a_1}-V_{a_2})\varphi_2
\\
&=-2\overline{\sqrt{\lambda}}\langle A_1-A_2|d\psi\rangle \alpha_2 \vartheta_2 e^{i\overline{\sqrt{\lambda}}\psi}
\\
&\hskip 1.5cm+2i \langle A_1-A_2|d(\alpha_2\vartheta_2)\rangle e^{i\overline{\sqrt{\lambda}}\psi}+(V_{a_1}-V_{a_2})\alpha_2 \vartheta_2 e^{i\overline{\sqrt{\lambda}}\psi}.
\end{align*}
That is we have
\begin{equation}\label{u5.1}
(\Delta_{b_1}-\Delta_{b_2})\varphi_2=g_2e^{i\overline{\sqrt{\lambda}}\psi},
\end{equation}
where
\[
g_2:=-2\overline{\sqrt{\lambda}}\langle A_1-A_2|d\psi\rangle \alpha_2 \vartheta_2 
+2i \langle A_1-A_2|d(\alpha_2\vartheta_2)\rangle +(V_{a_1}-V_{a_2})\alpha_2 \vartheta_2 .
\]
In consequence, we obtain
\begin{equation}\label{u6}
(\Delta_{b_1}+\overline{\lambda})\varphi_2= (f_2+g_2)e^{i\overline{\sqrt{\lambda}}\psi}:=h_2e^{i\overline{\sqrt{\lambda}}\psi}.
\end{equation}

The following Green's formula will be used in the sequel
\begin{equation}\label{mgf}
\int_M(\Delta_b+\lambda)u\overline{v}d\mu=\int_Mu\overline{(\Delta_b+\overline{\lambda})v}d\mu+\int_\Gamma \partial_{\nu_a}u\overline{v}ds-\int_\Gamma u\overline{\partial_{\nu_a}v}ds.
\end{equation}

Let 
\[
u_1:=u^{b_1}(\lambda )(\varphi_1)=\varphi_1+R^{b_1}(\lambda)((\Delta_{b_1}+\lambda)\varphi_1).
\]
In light of \eqref{u5}, we have
\begin{equation}\label{u7}
u_1=\varphi_1+R^{b_1}(\lambda)(e^{i\sqrt{\lambda}\psi}f_1).
\end{equation}

Applying Green's formula \eqref{mgf} with $u=u_1$ and $v=\varphi_2$, we obtain
\[
-\int_\Gamma \Lambda^{b_1}(\lambda)(\varphi_1)\overline{\varphi_2}ds+\int_\Gamma \varphi_1\overline{\partial_{\nu_{a_1}}\varphi_2}ds =\int_Mu_1\overline{(\Delta_{b_1}+\overline{\lambda})\varphi_2}d\mu,
\]
which, combined with \eqref{u6}, yields
\[
-\int_\Gamma \Lambda^{b_1}(\lambda)(\varphi_1)\overline{\varphi_2}ds+\int_\Gamma \varphi_1\overline{\partial_{\nu_{a_1}}\varphi_2}ds =\int_Mu_1e^{-i\sqrt{\lambda}\psi}\overline{h_2}d\mu.
\]
This and \eqref{u7} imply
\begin{align}
&-\int_\Gamma \Lambda^{b_1}(\lambda)(\varphi_1)\overline{\varphi_2}ds+\int_\Gamma \varphi_1\overline{\partial_{\nu_{a_1}}\varphi_2}ds=\int_M\alpha_1\vartheta_1\overline{h_2}d\mu \label{u8}
\\
&\hskip 4cm +\int_Me^{-i\sqrt{\lambda}\psi}R^{b_1}(\lambda)(e^{i\sqrt{\lambda}\psi}f_1)\overline{h_2}d\mu.\nonumber
\end{align}

Similarly to \eqref{u5.1}, we have
\[
(\Delta_{b_2}+\lambda)\varphi_1=(f_1+g_1)e^{i\sqrt{\lambda}\psi}:=h_1e^{i\sqrt{\lambda}\psi},
\]
where
\[
g_1:=-2\sqrt{\lambda}\langle A_2-A_1|d\psi\rangle \alpha_1 \vartheta_1 
+2i \langle A_2-A_1|d(\alpha_1\vartheta_1)\rangle +(V_{a_2}-V_{a_1})\alpha_1 \vartheta_1 .
\]

Next, let 
\begin{align*}
v_1:=u^{b_2}(\lambda)(\varphi_1)&=\varphi_1+R^{b_2}(\lambda)((\Delta_{b_2}+\lambda)\varphi_1)
\\
&= \varphi_1+R^{b_2}(\lambda)(h_1e^{-i\sqrt{\lambda}\psi}).
\end{align*}
Applying again Green's formula with $u=v_1$ and $v=\varphi_2$, we obtain
\[
-\int_\Gamma \Lambda^{b_2}(\lambda)(\varphi_1)\overline{\varphi_2}ds+\int_\Gamma \varphi_1\overline{\partial_{\nu_{a_2}}\varphi_2}ds =\int_Mv_1\overline{(\Delta_{b_2}+\overline{\lambda})\varphi_2}d\mu.
\]
In light of \eqref{u5}, this inequality yields
\[
-\int_\Gamma \Lambda^{b_2}(\lambda)(\varphi_1)\overline{\varphi_2}ds+\int_\Gamma \varphi_1\overline{\partial_{\nu_{a_2}}\varphi_2}ds =\int_Mv_1\overline{f_2}e^{-i\sqrt{\lambda}\psi}d\mu.
\]
In consequence, we have
\begin{align}
&-\int_\Gamma \Lambda^{b_2}(\lambda)(\varphi_1)\overline{\varphi_2}ds+\int_\Gamma \varphi_1\overline{\partial_{\nu_{a_2}}\varphi_2}ds=\int_M\alpha_1\vartheta_1\overline{f_2}d\mu \label{u9}
\\
&\hskip 4cm +\int_Me^{-i\sqrt{\lambda}\psi}R^{b_2}(\lambda)(e^{i\sqrt{\lambda}\psi}h_1)\overline{f_2}d\mu.\nonumber
\end{align}

Assume that $a_1=a_2$ on $\Gamma$. Whence, $\partial_{\nu_{a_1}}\varphi_2=\partial_{\nu_{a_2}}\varphi_2$. Then taking the difference side by side of \eqref{u8} and \eqref{u9}, we find
\begin{align}
&\int_\Gamma [\Lambda^{b_2}(\lambda)-\Lambda^{b_1}(\lambda)](\varphi_1)\overline{\varphi_2}ds=\int_M\alpha_1\vartheta_1\overline{g_2}d\mu \label{u10}
\\
&\quad+\int_Me^{-i\sqrt{\lambda}\psi}R^{b_1}(\lambda)(e^{i\sqrt{\lambda}\psi}f_1)\overline{h_2}d\mu-\int_Me^{-i\sqrt{\lambda}\psi}R^{b_2}(\lambda)(e^{i\sqrt{\lambda}\psi}h_1)\overline{f_2}d\mu.\nonumber
\end{align}

\section{Proof of the main results}

\subsection{Proof of Theorem \ref{thmme1}}

Let $b_j=(a,V_j)$, $j=1,2$, be as in $(1)$ of Theorem \ref{thmme1} and set $V=V_1-V_2$. With the notations of the preceding section, we choose $\alpha_1=\rho^{-\frac{1}{4}}\eta$ and $\alpha_2=\rho^{-\frac{1}{4}}$. Under these assumptions, we have  $\vartheta_1=\vartheta_2$ and
\[
g_2=(V_1-V_2)\alpha_2\vartheta_2.
\]
Hence, 
\begin{align}
\int_M\alpha_1\vartheta_1\overline{g_2}d\mu&=\int_{S_{\tilde{x}}^+N}\int_0^{\ell_+(\tilde{x},\theta)}V(r,\theta)\eta(\theta)drd\omega_{\tilde{x}}(\theta)\label{u11}
\\
&=\int_{S_{\tilde{x}}^+N}\mathbf{I}_0(V)(\tilde{x},\theta)\eta(\tilde{x},\theta)d\omega_{\tilde{x}}(\theta).\nonumber
\end{align}

Unless otherwise stated, $\mathbf{c}=\mathbf{c}(n,M,g,a, \max(|V_1|,|V_2|),\mathfrak{c})>0$ will denote a generic constant.
 
Let $r=p_{\frac{1}{4}}=\frac{4n}{2n+1}$ ($p_\theta$ is defined in \eqref{rtp}). As $r<2$, we verify 
\[
\|f_j\|_{0,r}+\|g_j\|_{0,r}\le \mathbf{c}\|\eta(\tilde{x},\cdot)\|_{2,2},\quad j=1,2,
\]
which, combined with \eqref{27}, yields
\begin{align}
&\left|\int_Me^{-i\sqrt{\lambda}\psi}R^{b_1}(\lambda)(e^{i\sqrt{\lambda}\psi}f_1)\overline{h_2}d\mu\right|\label{u12}
\\
&\hskip 2cm +\left|\int_Me^{-i\sqrt{\lambda}\psi}R^{b_2}(\lambda)(e^{i\sqrt{\lambda}\psi}h_1)\overline{f_2}d\mu\right|\le \mathbf{c}\tau^{-\frac{1}{2}}\|\eta(\tilde{x},\cdot)\|_{2,2}.\nonumber
\end{align}

Also, we have
\begin{equation}\label{u13}
\left|\int_\Gamma [\Lambda^{b_2}(\lambda)-\Lambda^{b_1}(\lambda)](\varphi_1)\overline{\varphi_2}ds\right|\le \mathbf{c}\tau^2\|\Lambda^{b_2}(\lambda)-\Lambda^{b_1}(\lambda)\| \|\eta(\tilde{x},\cdot)\|_{3,2},
\end{equation}
where $\|\Lambda^{b_2}(\lambda)-\Lambda^{b_1}(\lambda)\|$ denotes the norm of $\Lambda^{b_2}(\lambda)-\Lambda^{b_1}(\lambda)$ in $\mathscr{B}(\mathcal{B},L^q(\Gamma))$.

Using \eqref{u11}, \eqref{u12} and \eqref{u13} in \eqref{u10}, we obtain

\begin{align*}
&\mathbf{c}\left|\int_{S_{\tilde{x}}^+N}\mathbf{I}_0(V)(\tilde{x},\theta)\eta(\tilde{x},\theta)d\omega_{\tilde{x}}(\theta)\right|
\\
&\hskip 3cm\le \tau^{-\frac{1}{2}}\|\eta(\tilde{x},\cdot)\|_{2,2}+\tau^2\|\Lambda^{b_2}(\lambda)-\Lambda^{b_1}(\lambda)\| \|\eta(\tilde{x},\cdot)\|_{3,2},
\end{align*}
which, combined with Proposition \ref{pro2}, implies
\begin{align}
&\left|\int_{\partial_+SN}\mathbf{I}_0(V)(\tilde{x},\theta)\eta(\tilde{x},\theta)ds(\tilde{x})d\omega_{\tilde{x}}(\theta)\right| \label{u14}
\\
&\hskip 3cm\le \mathbf{c}\left(\tau^{-\frac{1}{2}}+\tau^{\sigma+n+4}\delta(b_1,b_2)\right)\|\eta\|_{3,2},\quad \tau\ge \tau^\ast,\nonumber
\end{align}
where $\tau^\ast$ is as in Proposition \ref{pro2}.

As $\delta(b_1,b_2)=0$, \eqref{u14} yields $\mathbf{I}_0(V)=0$. Since $\mathbf{I}_0$ is injective, we conclude that $V=0$. In other words, $V_1=V_2$.

From now and until the end of this section, $\mathbf{c}=\mathbf{c}(n,M,N,g,a,V_0,\mathfrak{c},\tilde{\mathfrak{c}})>0$ will denote a generic constant.

Suppose now that the assumptions of $(2)$ of Theorem \ref{thmme1} hold. In particular, $V:=V_1-V_2\in H^2(N)$, $\mathrm{supp}(V)\subset M$ and $\|V\|_{2,2}\le \tilde{\mathfrak{c}}$.  Whence $\mathbf{N}_0(V)\in H^3(N)$. In consequence, 
\[
\eta(\tilde{x},\theta)=\mathbf{I}_0\mathbf{N}_0(V)(\tilde{x},\theta)|\langle\theta|\nu(\tilde{x})\rangle|
\]
belongs to $H^3(\partial_+SN)$. By choosing that $\eta$ in \eqref{u14}, we obtain 
\begin{align*}
&\left| \int_{\partial_+SN}\mathbf{I}_0(V)(\tilde{x},\theta)\mathbf{I}_0\mathbf{N}_0(V)(\tilde{x},\theta)|\langle\theta|\nu(\tilde{x})\rangle ds(\tilde{x})d\omega_{\tilde{x}}(\theta) \right| 
\\
&\hskip 3cm\le \mathbf{c}\left(\tau^{-\frac{1}{2}}+\tau^{\sigma+n+4}\delta(b_1,b_2)\right)\|\mathbf{I}_0\mathbf{N}_0(V)\|_{3,2},\quad \tau\ge \tau^\ast,
\end{align*}
from which we derive
\[
\|N_0(V)\|_{0,2}^2\le  \mathbf{c}\left(\tau^{-\frac{1}{2}}+\tau^{\sigma+n+4}\delta(b_1,b_2)\right)\|\mathbf{N}_0(V)\|_{3,2},\quad \tau\ge \tau^\ast.
\]
In light of \eqref{nor1} and \eqref{nor2}, we get from the inequality above
\[
\|V\|_{0,2}^2\le  \mathbf{c}\left(\tau^{-\frac{1}{2}}+\tau^{\sigma+n+4}\delta(b_1,b_2)\right)\|V\|_{2,2},\quad \tau\ge \tau^\ast.
\]
Hence,
\begin{equation}\label{u17}
\mathbf{c}\|V\|_{0,2}^2\le  \tau^{-\frac{1}{2}}+\tau^{\sigma+n+4}\delta(b_1,b_2),\quad \tau\ge \tau^\ast.
\end{equation}
Minimizing the right hand side of \eqref{u17}, we derive the following H\"older stability inequality
\[
\|V\|_{0,2}\le \mathbf{c} \delta(b_1,b_2)^{\beta_0},
\]
where 
\[
\beta_0=\frac{1}{2(2\sigma +2n+9)}.
\]
The proof of Theorem \ref{thmme1} is complete.

\subsection{Proof of Theorem \ref{thmme2}}

Let $b_j=(a_j,V_j)\in \mathcal{A}_0\times \mathcal{V}$, $j=1,2$, satisfying  $\delta_+(b_1,b_2)<\infty$. We choose $\alpha_1$ and $\alpha_2$ as in the preceding section. That is $\alpha_1=\rho^{-\frac{1}{4}}\eta$ and $\alpha_2=\rho^{-\frac{1}{4}}$. Since
\[
\sigma_k(r,\theta)=\langle \dot{\gamma}_{\tilde{x},\theta}(r)|A^\#(\gamma_{\tilde{x},\theta}(r)\rangle=[\dot{\gamma}_{\tilde{x},\theta}(r)]^ja_j(\gamma_{\tilde{x},\theta}(r)),\quad k=1,2,
\]
and
\[
\tilde{\sigma}_k(r,\theta)=\int_0^{\ell_+(\tilde{x},\theta)}\sigma_k(r+t,\theta)dt,\quad k=1,2,
\]
we obtain 
\begin{equation}\label{u18}
\tilde{\sigma}_k(0,\theta)=\mathbf{I}_1(A_k)(\tilde{x},\theta),\quad k=1,2.
\end{equation}
Set $\sigma:=\sigma_1-\sigma_2$ and $\tilde{\sigma}:=\tilde{\sigma}_1-\tilde{\sigma}_2$. Then
\begin{align*}
\frac{\partial}{\partial r}\tilde{\sigma}(r,\theta)=\frac{\partial}{\partial r}\int_r^{r+\ell_+(\tilde{x},\theta)}\sigma_k(t,\theta)dt=\tilde{\sigma}_k(r,\theta).
\end{align*}
Using that 
\[
\langle(A_1-A_2)(r,\theta)|d\psi(r,\theta)\rangle\vartheta_1(r,\theta)\overline{\vartheta_2}(r,\theta)=\sigma(r,\theta)e^{i\tilde{\sigma}(r,\theta)},
\]
 we obtain
\begin{align*}
 &\int_0^{\ell_+(\tilde{x},\theta)}\langle( A(r,\theta)|d\psi(r,\theta)\rangle\vartheta_1(r,\theta)\overline{\vartheta_2}(r,\theta)dr=\int_0^{\ell_+(\tilde{x},\theta)}\sigma(r,\theta)e^{i\tilde{\sigma}(r,\theta)}dr
 \\
 &\hskip 2cm=-i\int_0^{\ell_+(\tilde{x},\theta)}\frac{\partial}{\partial r}e^{i\tilde{\sigma}(r,\theta)}dr=i[e^{i\tilde{\sigma}(0,\theta)}-1].
\end{align*}
This and \eqref{u18}  yield
\begin{equation}\label{u19}
\int_0^{\ell_+(\tilde{x},\theta)}\langle A(r,\theta)|d\psi(r,\theta)\rangle\vartheta_1(r,\theta)\overline{\vartheta_2}(r,\theta)dr=i\left[e^{i\mathbf{I}_1(A)(\tilde{x},\theta)}-1\right].
\end{equation}
We split $\frac{g_2}{\overline{\sqrt{\lambda}}}$ into two terms
\begin{equation}\label{u20}
\frac{g_2}{\overline{\sqrt{\lambda}}}=-2\langle A|d\psi\rangle\alpha_1\vartheta_2+\tilde{g}_2,
\end{equation}
where
\[
\tilde{g}_2=\frac{2i}{\overline{\sqrt{\lambda}}}\langle A|d(\alpha_2\vartheta)\rangle+\frac{1}{\overline{\sqrt{\lambda}}}(V_{a_1}-V_{a_2})\alpha_2\vartheta_2.
\]
For further use, we note that we have
\begin{equation}\label{u21}
\left|\int_M\overline{\tilde{g}_2}\alpha_1\vartheta_1 \right|\le \mathbf{c}\tau^{-1}\|\eta\|_{1,2}.
\end{equation}
Next, we combine the following equality
\begin{align*}
&\int_M\frac{g_2}{\overline{\sqrt{\lambda}}}\alpha_1\vartheta_1=\int_M\overline{\tilde{g}_2}\alpha_1\vartheta_1
\\
&\hskip 2cm -2\int_{\partial_+SN}\langle A(r,\theta)|d\psi(r,\theta)\rangle\vartheta_1(r,\theta)\overline{\vartheta_2}(r,\theta)\eta(\tilde{x},\theta)ds(\tilde{x})d\omega_x(\theta)
\end{align*}
and \eqref{u19} to obtain

\begin{align}
&\int_M\frac{g_2}{\overline{\sqrt{\lambda}}}\alpha_1\vartheta_1d\mu=\int_M\overline{\tilde{g}_2}\alpha_1\vartheta_1d\mu\label{u22}
\\
&\hskip 2cm -2i\int_{\partial_+SN}\left[e^{i\mathbf{I}_1(A)(\tilde{x},\theta)}-1\right]\eta(\tilde{x},\theta)ds(\tilde{x})d\omega_x(\theta).\nonumber
\end{align}

Putting together \eqref{u21} and \eqref{u22}, we find
\begin{align}
&\mathbf{c}\left|\int_{\partial_+SN}\left[e^{i\mathbf{I}_1(A)(\tilde{x},\theta)}-1\right]\eta(\tilde{x},\theta)ds(\tilde{x})d\omega_x(\theta)\right|\label{u23}
\\
&\hskip 3cm\le \tau^{-1}\|\eta\|_{1,2}+\left| \int_M\frac{g_2}{\sqrt{\lambda}}\alpha_1\vartheta_1\right|.\nonumber
\end{align}
In light of \eqref{u10}, \eqref{u12} and \eqref{u13}, we get from \eqref{u23}
\begin{align}
&\mathbf{c}\left|\int_{\partial_+SN}\left[e^{i\mathbf{I}_1(A)(\tilde{x},\theta)}-1\right]\eta(\tilde{x},\theta)ds(\tilde{x})d\omega_x(\theta)\right|\label{u24}
\\
&\hskip 3cm\le ( \tau^{-1}+ \tau\|\Lambda^{b_1}(\lambda)-\Lambda^{b_2}(\lambda)\|)\|\eta\|_{3,2}.\nonumber
\end{align}

We argue as in the proof of \cite[Lemma 4.2]{LQSY} to deduce from \eqref{u24}  
\begin{align}
&\mathbf{c}\left| \int_{\partial_+SN} \mathbf{I}_1(A)(\tilde{x},\theta)\eta(\tilde{x},\theta)ds(\tilde{x})d\omega_x(\theta) \right| \label{u25}
\\
&\hskip 4.5cm \le ( \tau^{-1}+ \tau\|\Lambda^{b_1}(\lambda)-\Lambda^{b_2}(\lambda)\|)\|\eta\|_{3,2}.\nonumber
\end{align}
Combining \eqref{56} and \eqref{u25}, we get
\begin{equation}\label{u26}
\mathbf{c}\left| \int_{\partial_+SN} \mathbf{I}_1(A)(\tilde{x},\theta)\eta(\tilde{x},\theta)ds(\tilde{x})d\omega_x(\theta) \right|
\le ( \tau^{-1}+ \tau^{\sigma+n+2}\delta(b_1,b_2))\|\eta\|_{3,2}.
\end{equation}
If $\delta(b_1,b_2)=0$, then \eqref{u26}  implies $A^s=0$ or equivalently $A_1^s=A_2^s$.

As $W^{2,\infty}(N,T^\ast N)$ is continuously embedded in $H^2(N,T^\ast N)$ (e.g. \cite[subsection 1.4.4]{Gr}), we have $A\in H^2(N,T^\ast N)$. In view of \eqref{nor4} and \eqref{nor5}, we proceed as in the preceding section to obtain the following stability inequality
 \[
 \|A^s\|_{0,2}\le \mathbf{c}\delta(b_1,b_2)^{\beta_1},
 \]
 where
 \[
 \beta_1=\frac{1}{2(\sigma+n+3)}.
 \]
This completes the proof of Theorem \ref{thmme2}.

\section*{Acknowledgement}

This work was supported by JSPS KAKENHI Grant Numbers JP25K17280, JP23KK0049.

\end{document}